\providecommand{\U}[1]{\protect\rule{.1in}{.1in}}
\newtheorem{thm}{Theorem}[section]
\newtheorem{lem}{Lemma}[section]
\newtheorem{prop}{Proposition}[section]
\newenvironment{proof}[1][Proof]{\noindent\textbf{#1.} }{\ \rule{0.5em}{0.5em}}
\normalsize\setlength{\parskip}{\baselineskip}
\numberwithin{equation}{section}
\definecolor{linkcolor}{rgb}{0,0,0.7}
\title{On De Finetti's control under Poisson observations: optimality of a double barrier strategy in a Markov additive model}
\author{Lijun Bo\thanks{School of Mathematics and Statistics, Xidian University, Xian, Shanxi, China. Email: alijunbo@xidian.edu.cn}\,\,\,\,\,\,\,\,Wenyuan Wang\thanks{School of Mathematical Sciences, Xiamen University, Xiamen, Fujian, China. Email: wwywang@xmu.edu.cn}\,\,\,\,\,\,\,\,
Kaixin Yan\thanks{School of Mathematical Sciences, Xiamen University, Xiamen, Fujian, China. Email: kaixinyan@stu.xmu.edu.cn}
}
\date{}
\begin{document}

\maketitle

\begin{abstract}
In this paper we consider the De Finetti's optimal dividend and capital injection problem under a Markov additive model. We assume that the surplus process before dividends and capital injections follows a spectrally positive Markov additive process. Dividend payments are made only at the jump times of an independent Poisson process.  Capitals are required to be injected whenever needed to ensure a non-negative surplus process to avoid bankruptcy. Our purpose is to characterize the optimal periodic dividend and capital injection strategy that maximizes the expected total discounted
dividends subtracted by the total discounted costs of capital injection. To this end, we first consider an auxiliary optimal periodic dividend and capital injection problem with final payoff under a single spectrally positive L\'evy process and conjecture that the optimal strategy is a double barrier strategy. Using the fluctuation theory and excursion-theoretical approach of the spectrally positive L\'evy process and the Hamilton-Jacobi-Bellman
inequality approach of the control theory, we are able to verify the conjecture that some double barrier periodic dividend and capital injection strategy solves the auxiliary problem. With the results for the auxiliary control problem and a fixed point argument for recursive
iterations induced by the dynamic programming principle, the optimality of a regime-modulated double barrier periodic dividend and capital injection strategy is proved for our target control problem.

\ \\
\textbf{Keywords:} Spectrally positive Markov additive process, De Finetti's control problem, periodic dividend strategy, capital injection.\\
\ \\

\noindent\textbf{Mathematical Subject Classification (2020)}:  60G51,\,\,\,93E20,\,\,\,91G80
\end{abstract}

\section{Introduction}

The De Finetti's dividend problem amounts to a kind of stochastic optimal control problem that aims, relying on the information of the controlled and uncontrolled processes available so far, to
identify some adapted control process that maximizes the expected total discounted net dividends.
It was first studied in \cite{De Finetti57} under a symmetric random walk model, where a single barrier dividend strategy was proved to be the optimal control strategy which yields the maximum expected accumulated discounted net dividends until ruin.
Actually, paying dividends to investors is a common policy in the
economics of corporate finance; interested readers are referred to \cite{Feldstein83} for detailed explanations on why companies pay dividends. However, substantial dividend payments will inevitably increase the risk exposure of bankruptcy for the company. Hence, to protect the company, it is a common business to raise new capital, which is termed as capital
injection in the literature, by issuing new debts that usually appears in the form of loan from the bank; see \cite{Easterbrook84} for more details.

Following the pioneer work of \cite{De Finetti57} on De Finetti's dividend problem, research works along this line have been growing fast in insurance and corporate finance. To name a few, the literature has been witnessing many progresses when the
underlying surplus processes are Cram\'er-Lundberg processes, diffusion processes and L\'evy processes; see \cite{Albrecher11}, \cite{Azcue15}, \cite{Avanzi11}, \cite{Avanzi13}, \cite{Avram07}, \cite{Bayraktar13}, \cite{Jiang12}, \cite{Noba21}, \cite{Noba18}, \cite{Perez18}, \cite{Wang22.b}, \cite{Wang18}, \cite{Wei16}, \cite{Zhao17}, \cite{Zhu21}, \cite{Zhu20}, etc. However, to the limit of the authors' knowledge, there are significantly limited existing research works concerning De Finetti's stochastic control problem under the Markov additive models; the only three of them can be found in \cite{Mata22}, \cite{Noba20} and \cite{Wang22.c}.
Under the spectrally negative Markov additive process, \cite{Noba20} studied De Finetti's dividend and capital injection problem subject to the constraint that the accumulated dividend process is absolutely continuous with bounded density, and verified the optimality of a regime-modulated refraction-reflection strategy.
Then, under the spectrally positive Markov additive process, De Finetti's dividend and capital injection problem was investigated in \cite{Wang22.c}; while this time the accumulated dividend process can be any non-decreasing, right-continuous and adapted process. A regime-modulated double barrier dividend and capital injection strategy is proved to dominate any other strategy.
Very recently, \cite{Mata22} appeared to be the first and latest work that considered the Poisson observation version of De Finetti's dividend and capital injection problem under the spectrally negative Markov additive process, where the decision maker makes dividend decisions only at independent Poisson arrival times, while capital is injected into the surplus process continuously in time so that the controlled surplus process is always non-negative. By considering an auxiliary problem first and then using approximations via recursive iterations, the authors showed the optimality of a regime-modulated double barrier dividend and capital injection strategy.

Motivated by the work of \cite{Mata22},  in this paper, we raise a natural conjecture that the form of optimal strategy solving the Poisson observation version of De Finetti's dividend and capital injection problem remains a regime-modulated double barrier strategy when the uncontrolled spectrally negative Markov additive process of \cite{Mata22} is replaced with a spectrally positive Markov additive process, and aim to verify the conjecture. We first address an auxiliary optimal dividend and capital injection problem with a final payoff under a single spectrally positive L\'evy process, of which the optimal strategy is guessed to be some double barrier strategy. To confirm our guess, we need to derive the expression of the value function of a double barrier strategy for the auxiliary problem. This boils down to the derivation of solutions to the expected discounted total dividend payment minus the expected discounted total capital injection as well as the potential measure associated with the spectrally positive L\'evy process controlled by a double barrier dividend and capital injection strategy. During the derivation process, the cases that the underlying single spectrally positive L\'evy process is of bounded or unbounded variation should be considered separately, and the fluctuation theory and excursion-theoretical approach play important roles.
With the expression of the value function of a double barrier strategy in hand, we can characterize the optimal strategy of the auxiliary problem among the set of double barrier strategies, to which end we manage to express the derivative of the value function in terms of the Laplace transform and potential measure of the spectrally positive L\'evy process controlled by a single barrier periodic dividend strategy until the first time it down-crosses $0$. This new compact expression facilitates us to identify the candidate optimal strategy among the set of double barrier dividend and capital injection strategies and the slope conditions of the value function of this candidate optimal double barrier strategy.
Using these slope conditions and the Hamilton-Jacobi-Bellman inequality approach of the control theory, the above candidate optimal double barrier strategy is verified as the optimal strategy for the auxiliary problem.
Finally, thanks to the results for the auxiliary control problem and a fixed point argument
for recursive iterations induced by the dynamic programming principle, the optimality of a regime-modulated double barrier periodic dividend and capital injection strategy is proved for our target control problem.

It needs mentioning that the Markov additive model is a regime-switching model. Actually, the Markov additive process can be viewed as a family of L\'evy processes switching according to an independent continuous time Markov chain.
The regime-switching model has been widely studied due to its capability to capture the transitions of market behaviors and its mathematical tractability and explicit structures. A list of empirical studies concerning regime-switching can be found in \cite{Ang12}, \cite{Hamilton89}, \cite{So98}, and the references therein. Apart from \cite{Mata22}, \cite{Noba20} and \cite{Wang22.c}, other works on dividend control problem with regime switching can be found in \cite{Azcue15}, \cite{Jiang12}, \cite{Wei16}, \cite{Yang16} and \cite{Zhu21}, etc. We also mention that the feature of Poisson observation has been considered in the literature. To name a few, exit problems were studied under various models in \cite{Albrecher13a}, \cite{Albrecher16}, \cite{Albrecher13b}, \cite{Bekker09}, and the references therein. In the context of dividend control problems under Poisson observation, we refer to a short list of \cite{Albrecher11}, \cite{Avanzi13}, \cite{Zhao17} and \cite{Mata22}, etc.

Our approach on verifying the optimality of a double barrier strategy relies on the standard two-step approach for optimal dividend and capital injection control problems under the Markov additive models, i.e., consider first the control problem with a single regime using purely probabilistic approaches, and then solve the target control problem with regimes via approximation arguments (see; \cite{Mata22}, \cite{Noba20}, \cite{Wang22.c}). However, our analysis within each step differs substantially from that of \cite{Noba20} and \cite{Wang22.c} due to the complexity coming from the Poisson observation. Comparing with \cite{Mata22} where Poisson observation is present, we stress that we work with spectrally positive L\'evy processes. Due to the different path properties of spectrally positive L\'evy processes from that of spectrally negative L\'evy processes, distinct computations and proofs are required to handle the auxiliary control problem. For example, in addition to the fluctuation theory of the spectrally positive L\'evy process, we need also resort to the excursion-theoretical approach, which is not necessary in the analysis of \cite{Mata22}, to derive the expression for the value function of a double barrier dividend and capital injection strategy.
In addition, we have to separately consider the two cases whether or not the spectrally positive L\'evy process has bounded path variation, while \cite{Mata22} does not need to treat these two cases separately. Actually, arguments involving fluctuation theory and excursion-theoretical approach are used for the case of bounded variation, approximation and limiting arguments are further needed to handle the case of unbounded variation.
We also note that, the dividend barriers associated with the optimal strategy of the auxiliary control problem and the target control problem are strictly positive, while the counterparts of \cite{Mata22} can be $0$ in the bounded path variation case.

The rest of the paper unfolds as follows. In Section \ref{sec:model}, the optimal dividend and capital injection problem under the spectrally positive Markov additive process is formulated, some preliminaries of spectrally positive L\'{e}vy processes are also introduced. Section \ref{sec:aux} is devoted to study an auxiliary optimal dividend and capital injection problem with a final payoff at an exponential terminal time. Using fluctuation theory and excursion-theoretical approach of the spectrally positive L\'evy processes, and the Hamilton-Jacobi-Bellman inequality approach of the control theory, a double-barrier strategy is verified as the optimal strategy. In Section \ref{sec:opt}, based on the results obtained for the auxiliary control problem and a fixed point argument for recursive iterations induced by the dynamic programming principle, the optimality of a regime-modulated double barrier periodic dividend and capital injection strategy is proved for our target control problem.

\section{Problem Formulation under Spectrally Positive Markov Additive Processes}
\label{sec:model}

\subsection{Some preliminaries of spectrally positive L\'{e}vy processes}

Let $X=(X_t)_{t\geq 0}$ be a L\'{e}vy process defined on a probability space $(\Omega, \mathcal{F},\mathbb{F},\mathrm{P})$, where $\mathbb{F}=\{\mathcal{F}_t\}$ satisfies the usual conditions. For $x\in\mathbb{R}$, we denote by $\mathrm{P}_x$ the law of $X$ starting from $x$ and write $\mathrm{E}_x$ the associated expectation. We also use $\mathrm{P}$ and $\mathrm{E}$ in place of $\mathrm{P}_0$ and $\mathrm{E}_0$. The L\'{e}vy process $X$ is said to be spectrally positive if it has no negative jumps and it is not a subordinator. The Laplace exponent $\psi: [0,\infty)\rightarrow \mathbb{R}$ satisfying
\begin{align*}
\mathrm{E}[e^{-\theta X_t}]=:e^{\psi(\theta)t}, \quad t,\theta\geq 0,
\end{align*}
is given by the L\'{e}vy-Khintchine formula that
\begin{align*}
\psi(\theta):=c \theta+\frac{\sigma^2}{2}\theta^2+\int_{(0,\infty)}(e^{-\theta z}-1+\theta z\mathbf{1}_{\{z<1\}})\upsilon(dz), \quad \theta\geq 0,
\end{align*}
where $\gamma\in\mathbb{R}$, $\sigma\geq 0$, and $\upsilon$ is the L\'{e}vy measure of $X$ on $(0,\infty)$ that satisfies
\begin{align*}
\int_{(0,\infty)}(1\wedge z^2)\upsilon(dz)<\infty.
\end{align*}
It is well-known that $X$ has paths of bounded variation if and only if $\sigma=0$ and $\int_{(0,1)}z\upsilon(dz)<\infty$; in this case, we have
\begin{align*}
X_t=-ct+S_t,\quad t\geq 0,
\end{align*}
where
\begin{align*}
\overline{c}:=c+\int_{(0,1)}z\upsilon(dz),
\end{align*}
and $(S_t)_{t\geq 0}$ is a driftless subordinator. As we have ruled out the case that $X$ has monotone paths, it holds that $c>0$. Its Laplace exponent is given by
\begin{align*}
\psi(\theta)=c\theta+\int_{(0,\infty)}(e^{-\theta z}-1)\upsilon(dz),\quad \theta \geq 0.
\end{align*}
To exclude the trivial case, it is assumed throughout the paper that
\begin{align*}
\mathrm{E}[X_1]=-\psi'(0+)<\infty.
\end{align*}
Let us also recall the $q$-scale function for the spectrally positive L\'{e}vy process $X$. For $q>0$, there exists a continuous and increasing function $W_{q}:\mathbb{R}\rightarrow [0,\infty)$, called the $q$-scale function such that $W_{q}(x)=0$ for all $x<0$ and its Laplace transform on $[0,\infty)$ is given by
\begin{align*}
\int_0^{\infty} e^{-sx}W_{q}(x)dx=\frac{1}{\psi(s)-q},\quad s>\Phi_{q},
\end{align*}
where $\Phi_{q}:=\sup\{s\geq 0: \psi(s)=q\}$. We also define $Z_{q}(x)$ by
\begin{align*}
Z_{q}(x):=1+q\int_0^x W_{q}(y)dy,\quad x\in\mathbb{R},
\end{align*}
and its anti-derivative
\begin{align*}
\overline{Z}_{q}(x):=\int_0^x Z_{q}(y)dy,\quad x\in\mathbb{R}.
\end{align*}
The so-called second scale function is defined by, for $s\geq0$,
\begin{eqnarray}
Z_q(x,s)=e^{sx}\Big(1-(\psi(s)-q)\int_0^xe^{-sy}W_q(y)\mathrm{d}y\Big),\quad x\geq0,\nonumber
\end{eqnarray}
and $Z_q(x,s)=e^{sx}$ for $x<0$. Note $Z_q(x,s)=Z_q(x)$ for $s=0$ and that we can rewrite $Z_q(x,s)$ for $s>\Phi_q$ in the form
\begin{eqnarray}
Z_q(x,s)=(\psi(s)-q)\int_0^{\infty}e^{-sy}W_q(x+y)\mathrm{d}y,\quad x\geq0,\,s>\Phi_q.\nonumber
\end{eqnarray}
We recall that if $X$ has paths of bounded variation, $W_q(x)\in C^1((0,\infty))$ if and only if the L\'{e}vy measure $\upsilon$ has no atoms. If $X$ has paths of unbounded variation, we have that $W_q(x)\in C^1((0,\infty))$. Moreover, if $\sigma>0$, we have $W_q(x)\in C^2((0,\infty))$. Hence, we have that $Z_q(x)\in C^1((0,\infty))$, $\overline{Z}_q(x)\in C^1(\mathbb{R})$ and $\overline{Z}_q(x) \in C^2((0,\infty))$ for bounded variation case; and we have $Z_q(x)\in C^1(\mathbb{R})$, $\overline{Z}_q(x) \in C^2(\mathbb{R})$ and $\overline{Z}_q(x)\in C^3((0,\infty))$ for the unbounded variation case. We also know that
\begin{align*}
\begin{split}
W_{q} (0+) &= \left\{ \begin{array}{ll} 0 & \textrm{if $X$ is of unbounded
variation,} \\ 1/\overline{c} & \textrm{if $X$ is of bounded variation.}
\end{array} \right.
\end{split}
\end{align*}
Let us define $\tau_{a}^{-}:=\inf\{t\geq 0; X_{t}<a\}$ and $\tau_{b}^{+}:=\inf\{t\geq 0; X_{t}>b\}$. Then, for $b\in(0,\infty)$ and $x\in[0,b]$, we have
\begin{eqnarray}
\label{fluc.1}
\mathrm{E}_{x}\left(e^{-q \tau_{0}^{-}}\mathbf{1}_{\{\tau_{0}^{-}<\tau_{b}^{+}\}}\right)
\hspace{-0.3cm}&=&\hspace{-0.3cm}
\frac{W_{q}(b-x)}{W_{q}(b)},
\\
\label{fluc.2}
\mathrm{E}_{x}\left(e^{-q \tau_{b}^{+}}\mathbf{1}_{\{\tau_{b}^{+}<\tau_{0}^{-}\}}\right)
\hspace{-0.3cm}&=&\hspace{-0.3cm}
Z_{q}(b-x)-\frac{Z_{q}(b)}{W_{q}(b)}W_{q}(b-x).
\end{eqnarray}

In addition, for further use, we recall briefly some concepts of the excursion theory for the spectrally positive L\'evy process $X$ reflected from the infimum, i.e., $\{X(t)-\underline{X}(t);t\geq0\}$ with $\underline{X}(t)=\inf_{0\leq s\leq t}X_{s}$, and we refer to \cite{Bertoin96} for more details.
For $x\in(-\infty,\infty)$, the process $\{L(t):= -\underline{X}(t)+x, t\geq0\}$ represents a local time at $0$ of
the Markov process $\{X(t)-\underline{X}(t);t\geq0\}$ under $\mathbb{P}_{x}$.
Define the inverse local time as
$$L^{-1}(t):=\inf\{s\geq0: L(s)>t\}.$$
Let $L^{-1}(t-):=\lim\limits_{s\rightarrow t-}L^{-1}(s)$ be the left limit of $L^{-1}(s)$ at $s=t$.
When $L^{-1}(t)-L^{-1}(t-)>0$, define a Poisson point process $\{(t, e_{t}); t\geq0\}$ as follows
$$e_{t}(s):=X(L^{-1}(t-)+s)-X(L^{-1}(t)), \,\,s\in(0,L^{-1}(t)-L^{-1}(t-)],$$
where $L^{-1}(t)-L^{-1}(t-)$ is referred to as the lifetime of $e_{t}$.
While, when $L^{-1}(t)-L^{-1}(t-)=0 $, define $e_{t}:=\Upsilon$, where $\Upsilon$ is some additional isolated point.
A conclusion drawn by It\^{o} states that $e$ is a Poisson point process with
characteristic measure $n$
when the reflected process $\{X(t)-\underline{X}(t);t\geq0\}$ is recurrent; otherwise $\{e_{t}; t\leq L(\infty)\}$ is a Poisson point process which stops until an excursion with infinite lifetime takes place. Here, $n$ is a measure on the space  $\mathcal{E}$ of excursions,
that is, the space $\mathcal{E}$  of c\`{a}dl\`{a}g functions $f$ such that
\begin{eqnarray}
&&f:\,(0,\zeta)\rightarrow (0,\infty)\,\quad \mbox{for some } \zeta=\zeta(f)\in(0,\infty],
\nonumber\\
&&f:\,\{\zeta\}\rightarrow (0,\infty)\,\,\,\,\,\,\quad \mbox{if } \zeta<\infty,
\nonumber
\end{eqnarray}
where $\zeta=\zeta(f)$ represents the lifetime of the excursion; and we refer to Definition 6.13 of \cite{Kyprianou14} for more details of the space $\mathcal{E}$ of canonical excursions.
Denote by $\varepsilon(\cdot)$, or $\varepsilon$ for short, a generic excursion
in $\mathcal{E}$.
Denote by $\overline{\varepsilon}=\sup\limits_{t\in[0,\zeta]}\varepsilon(t)$ the excursion height of $\varepsilon$. And, denote
by
$$
\rho_{b}^{+}\equiv\rho_{b}^{+}(\varepsilon) :=\inf\{t\in[0,\zeta]: \varepsilon(t)>b\},
$$
the first passage time of $\varepsilon$ with the convention of $\inf\emptyset:=\zeta$.
In addition, let $\varepsilon_{g}$ be
the excursion (away from $0$)  with left-end point $g$ for the reflected process $\{X(t)-\underline{X}(t);t\geq0\}$, and let $\zeta_{g}$ and $\overline{\varepsilon}_{g}$ be, respectively, the lifetime and excursion height of $\varepsilon_{g}$; see Section IV.4 of \cite{Bertoin96}.

\subsection{Problem Formulation}
Let us consider the risk process modelled by the spectrally positive Markov additive process $\{(X_t,Y_t);t\geq0\}$. Here, $\{Y_t;t\geq 0\}$ is a continuous time Markov chain with finite state space $\mathcal{E}$ and the generator matrix $(\lambda_{ij})_{i,j\in\mathcal{E}}$. Condition on that Markov chain $Y$ is in the state $i$, the process $X$ evolves as a spectrally
positive L\'evy process $X^{i}$ until the Markov chain $Y$ switches to another state $j\neq i$, at which instant there is a downward jump in $X$ with a random amount $J_{ij}$. We assume that $(X^{i})_{i\in\mathcal{E}}$, $Y$, and $(J_{ij})_{i,j\in\mathcal{E}}$ are independent from each other and are defined on a filtered probability space $(\Omega,\mathcal{F},\{\mathcal{F}_{t};t\geq 0\},\mathrm{P})$ satisfying the usual condition. Denote by $\mathrm{P}_{x,i}$ the law of the process $\{(X_{t},Y_t);t\geq 0\}$ conditioning on $\{X_{0}=x,Y_{0}=i\}$.

We consider a bail-out dividend control problem in this Markov additive framework, where the
beneficiaries of dividends are supposed to injects capitals into the surplus process so that the resulting
surplus process are always non-negative, i.e., bankruptcy never occurs. We consider two non-decreasing, right-continuous, adapted processes $\{D_{t};t\geq 0\}$ and $\{R_{t};t\geq 0\}$ defined on $(\Omega,\mathcal{F},\{\mathcal{F}_{t};t\geq 0\},\mathrm{P})$, which, respectively, represent the cumulative amount of dividends and injected capitals with $D_{0-}=R_{0-}=0$. In this paper we consider that the dividend payments can only be made at the arrival epochs $(T_n)_{n\geq1}$ of an independent Poisson process $(N_t)_{t\geq0}$ with the intensity $\gamma>0$. Contrary to the dividend payments, capital injection can be made continuously in time. Hence, the surplus process after taking into account the dividends and capital injection is formulated as $U_t=X_t-D_t+R_t,\,t\geq0$. The value function of the periodic dividend control problem with capital injection is defined by
\begin{eqnarray}
\label{goal}
V(x,i)\hspace{-0.2cm}&=&\hspace{-0.2cm}\sup_{D,R}\mathrm{E}_{x,i}\left(\int_{0}^{\infty}e^{-\int_{0}^{t}\delta_{Y_s}\mathrm{d}s}\mathrm{d}D_{t}-\phi\int_{0}^{\infty}e^{-\int_{0}^{t}\delta_{Y_s}\mathrm{d}s}\mathrm{d}R_{t}\right)\notag\\
\hspace{-0.2cm}&&\hspace{-0.2cm}
\text{subject to\ \  $U_{t}=X_{t}-D_{t}+R_{t}\geq 0$ for all $t\geq 0$,}
\nonumber\\
\hspace{-0.2cm}&&\hspace{-0.2cm}
\text{both $D_{t}$ and $R_{t}$ are non-decreasing, c\`{a}dl\`{a}g and adapted processes,}
\nonumber\\
\hspace{-0.2cm}&&\hspace{-0.2cm}
D_t=\int_0^t\Delta D_s\mathrm{d}N_s,\text{ $t\geq0$, for a Poisson process $(N_t)_{t\geq0}$ with intensity $\gamma>0$, }
\nonumber\\
\hspace{-0.2cm}&&\hspace{-0.2cm}
\text{$D_{0-}=R_{0-}=0$, and $\int_{0}^{\infty}e^{-\int_{0}^{t}\delta_{Y_s}\mathrm{d}s}\mathrm{d}R_{t}<\infty$, $\mathrm{P}_{x,i}$-almost surely},
\end{eqnarray}
where $(\delta_{i}) \in [0,\infty)^{\mathcal{E}}$ is a discounting rate function that switches according to the economic environment depicted by the Markov chain $Y$, and $\phi>1$ is the cost per unit capital injected. Our goal is to identify the value functions $(V(x,i))_{i\in\mathcal{E}}$ and find the optimal strategy $(D^*,R^*)$ that attains the value functions $(V(x,i))_{i\in\mathcal{E}}$.

Following the similar proof of Proposition 3.1 in \cite{Mata22}, we can readily obtain the following dynamic programming principle for value function of the control problem holds valid, and its proof is hence omitted.

\begin{prop}
For $x\in\mathbb{R}$ and $i\in\mathcal{E}$, we have that
\begin{align}\label{dpp}
V(x,i)=\sup_{D,R}\mathrm{E}_{x,i}\left [ \int_{0}^{e_{\lambda_{i}}}e^{-\int_{0}^{t}\delta_{Y_s}\mathrm{d}s}\mathrm{d}D_{t}-\phi\int_{0}^{e_{\lambda_{i}}}e^{-\int_{0}^{t}\delta_{Y_s}\mathrm{d}s}\mathrm{d}R_{t}+e^{-\int_0^{e_{\lambda_{i}}}\delta_{Y_s}ds}V(U_{e_{\lambda_{i}}}, Y_{e_{\lambda_{i}}})\right],\nonumber
\end{align}
where $e_{\lambda_{i}}$ is the first time $Y$ switches the regime state under $\mathrm{P}_{x,i}$.
\end{prop}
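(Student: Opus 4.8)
The plan is to establish the identity by the standard two-step argument built on the strong Markov property of the pair $(X,Y)$ at the first regime-switching time $\tau:=e_{\lambda_i}$, which under $\mathrm{P}_{x,i}$ is exponentially distributed with parameter $\lambda_i:=-\lambda_{ii}$ and, by construction, is independent of the driving L\'evy process $X^i$ governing the surplus before $\tau$ and of the Poisson observation clock $(N_t)$. Writing $G(x,i)$ for the right-hand side of the claimed identity and $J(x,i;D,R)$ for the objective functional in \eqref{goal}, I would prove the two inequalities $V(x,i)\le G(x,i)$ and $V(x,i)\ge G(x,i)$ separately. The key structural fact throughout is that, conditionally on $\mathcal{F}_\tau$, the shifted process $\{(X_{\tau+s},Y_{\tau+s});s\ge0\}$ is again a spectrally positive Markov additive process started from $(U_\tau,Y_\tau)$, incorporating the downward switching jump $J_{iY_\tau}$ incurred at $\tau$ together with any capital injected in response, while the independent Poisson clock restarts by the lack-of-memory property; hence any admissible control on the post-$\tau$ horizon corresponds, after a shift by $\theta_\tau$, to an admissible control for the original problem started at $(U_\tau,Y_\tau)$.

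For the upper bound I would fix an arbitrary admissible pair $(D,R)$ and split each of the two discounted integrals in $J(x,i;D,R)$ at $\tau$. The pre-$\tau$ contribution is precisely the first two terms inside the supremum defining $G$. For the post-$\tau$ contribution I would factor out the discount $e^{-\int_0^\tau\delta_{Y_s}\mathrm{d}s}$, condition on $\mathcal{F}_\tau$, and apply the Markov property: the resulting inner conditional expectation is the payoff of the shifted admissible strategy for the problem started at $(U_\tau,Y_\tau)$, and is therefore bounded above by $V(U_\tau,Y_\tau)$. Combining the two pieces gives $J(x,i;D,R)\le \mathrm{E}_{x,i}[\,\text{pre-}\tau\text{ terms}+e^{-\int_0^\tau\delta_{Y_s}\mathrm{d}s}V(U_\tau,Y_\tau)\,]\le G(x,i)$, and taking the supremum over $(D,R)$ yields $V(x,i)\le G(x,i)$.

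For the lower bound I would use an $\varepsilon$-optimal concatenation. Given $\varepsilon>0$ and an arbitrary admissible control restricted to $[0,\tau)$, I would, via a measurable selection, attach to each admissible post-switch state $(y,j)$ a control achieving $J(y,j;\cdot)\ge V(y,j)-\varepsilon$, and glue it onto the initial control at time $\tau$ after shifting by $\theta_\tau$. One must then verify that the concatenated pair is again admissible: the dividend process still charges mass only at Poisson epochs (using the restart of the independent Poisson clock), the capital-injection process keeps $U_t\ge0$ across the switching jump and preserves the discounted-integrability constraint, and both processes remain non-decreasing, c\`adl\`ag and adapted. Evaluating the objective of this concatenated strategy and invoking the strong Markov property exactly as in the upper-bound step bounds it below by $\mathrm{E}_{x,i}[\,\text{pre-}\tau\text{ terms}+e^{-\int_0^\tau\delta_{Y_s}\mathrm{d}s}(V(U_\tau,Y_\tau)-\varepsilon)\,]$; letting the initial control range over all admissible controls on $[0,\tau)$ and sending $\varepsilon\downarrow0$ gives $V(x,i)\ge G(x,i)$.

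The main obstacle is the measurable-selection step in the lower bound: one needs a jointly measurable family of near-optimal controls indexed by the post-switch state $(U_\tau,Y_\tau)$, together with the verification that the glued control inherits admissibility, in particular the non-negativity constraint $U_t\ge0$ at and after the switching jump and the finiteness of the discounted capital-injection cost. I expect this to be routine in the present setting precisely because it mirrors the argument behind Proposition 3.1 of \cite{Mata22}; the only genuine adaptation is the bookkeeping of the downward switching jump $J_{ij}$ of the spectrally positive dynamics, which affects neither the measurability nor the admissibility arguments. Once both inequalities are established, the stated identity follows.
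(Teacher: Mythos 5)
The paper omits the proof of this proposition, deferring to the standard dynamic programming argument of Proposition 3.1 in \cite{Mata22}; your proposal reconstructs exactly that argument (strong Markov property at the first switching time $e_{\lambda_i}$, upper bound by conditioning, lower bound by $\varepsilon$-optimal concatenation with a measurable selection), including the correct bookkeeping of the downward switching jump $J_{iY_{e_{\lambda_i}}}$ and the admissibility of the glued control. This is the same approach the paper relies on, and the plan is sound.
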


The next theorem is the main result of this paper, which confirms the optimality of a regime-modulated periodic-classical reflection strategy for the stochastic control problem \eqref{goal}, whose proof is deferred to Section 4.

\begin{thm}
\label{thm2.1}
There exists a function $\mathbf{b}^{*}=(b_{i}^{*})_{i\in\mathcal{E}}\in(0,\infty)^{\mathcal{E}}$ such that the periodic dividend and capital injection strategy with the dynamic upper periodic barrier $b^{*}_{Y_{t}}$ and fixed lower reflection barrier $0$ is optimal that attains the value function in \eqref{goal} that
$$V_{0,\mathbf{b}^{*}}(x,i)=V(x,i),\quad (x,i)\in\mathbb{R}_{+}\times \mathcal{E},$$
where $V_{0,\mathbf{b}^{*}}(x,i)$ represents the value function of the periodic-classical barrier dividend and capital injection strategy with upper barrier $b^{*}_{Y_{t}}$ and lower barrier $0$.
\end{thm}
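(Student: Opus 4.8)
The plan is to follow the two-step programme advertised in the introduction: first solve an auxiliary single-regime problem with an exponentially-terminated horizon and a final payoff, and then stitch the regimes together via a fixed-point argument for the recursive iterations generated by the dynamic programming principle. For the auxiliary problem, I would fix a regime $i\in\mathcal{E}$, freeze the value functions $(V(\cdot,j))_{j\neq i}$ in the other regimes as a given ``terminal payoff'', and study the control of the single spectrally positive L\'evy process $X^{i}$ up to the independent exponential switching time $e_{\lambda_{i}}$. The conjecture is that within the class of periodic-classical double barrier strategies — periodic (Poissonian) dividends paid down to an upper barrier $b$ together with classical reflection (continuous capital injection) at the lower barrier $0$ — there is an optimal pair $(0,b_i^{*})$ with $b_i^{*}>0$. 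I would first compute the value function $V_{0,b}(x,i)$ of such a strategy in closed form using the scale functions $W_q,Z_q,\overline{Z}_q$ and the identities \eqref{fluc.1}--\eqref{fluc.2}, treating the bounded- and unbounded-variation cases separately; the excursion-theoretic machinery recalled above (the Poisson point process $e$ of excursions away from the infimum, the passage functional $\rho_b^{+}$, and the characteristic measure $n$) is exactly what is needed to evaluate the expected discounted dividends paid at the Poisson epochs before $X^{i}$ down-crosses $0$, and to assemble the potential measure of the reflected, barrier-controlled process.

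Once $V_{0,b}(x,i)$ is available, the next step is to optimise over $b$. Here I would differentiate $V_{0,b}$ in $b$ and seek a compact expression for $\partial_b V_{0,b}$ in terms of the Laplace transform and potential measure of $X^{i}$ controlled only by the single upper periodic barrier until its first down-crossing of $0$; the barrier $b_i^{*}$ is then characterised by a first-order (smooth-fit / slope) condition, and one checks $b_i^{*}>0$ using $W_q(0+)$ and the strict positivity argument flagged in the introduction. This candidate is then verified to be genuinely optimal by the Hamilton--Jacobi--Bellman inequality method: I would show that $V_{0,b_i^{*}}$ is sufficiently smooth (using the regularity of $W_q,Z_q,\overline{Z}_q$ recorded above, with the $C^1$/$C^2$ distinction driving the bounded-versus-unbounded-variation split), that it satisfies the variational inequalities associated with \eqref{goal} — namely $V'\le 1$ reflecting the dividend constraint, $V'\ge \phi$ reflecting the capital-injection cost $\phi>1$, and the generator inequality $(\mathcal{A}-\delta_i)V+\gamma(\text{dividend operator}-V)\le 0$ — with equality in the appropriate regions, and finally that any admissible $(D,R)$ gives at most $V_{0,b_i^{*}}$ via an It\^o--Meyer / change-of-variables and optional-stopping argument, the slope bounds controlling the $dD$ and $dR$ terms.

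For the target multi-regime problem I would define the map $\mathcal{T}$ sending a vector of candidate value functions $(v(\cdot,j))_{j}$ to the vector of auxiliary value functions obtained by solving, in each regime $i$, the single-regime double-barrier problem with terminal payoff $\sum_{j\neq i}\frac{\lambda_{ij}}{\lambda_i}v(\cdot,j)$ at the switch time, exactly as dictated by the dynamic programming principle of Proposition 2.1. The barrier vector $\mathbf{b}^{*}=(b_i^{*})_i$ emerges as the fixed-point data: I would show $\mathcal{T}$ is a contraction (or a monotone, bounded operator admitting a fixed point) on a suitable complete function space with a weighted sup-norm, so that the recursive iterations converge, the limit $V$ is a fixed point, and the associated regime-modulated strategy with dynamic upper barrier $b^{*}_{Y_t}$ and lower barrier $0$ attains it. A verification (martingale / supermartingale) argument then upgrades the fixed point to the true value function, giving $V_{0,\mathbf{b}^{*}}=V$.

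The main obstacle I anticipate is the auxiliary-problem HJB verification in the unbounded-variation case. There the scale function satisfies only $W_q\in C^1((0,\infty))$ (and $C^2$ only when $\sigma>0$), so $V_{0,b_i^{*}}$ may fail to be twice differentiable at the barrier, and one cannot directly apply the classical It\^o formula; as the introduction warns, one must build $V_{0,b}$ and check the variational inequalities through approximation and limiting arguments — approximating $X^{i}$ by bounded-variation processes, establishing the inequalities there where the excursion computations are transparent, and passing to the limit while controlling the potential measures and the Poissonian dividend functional. Establishing the strict positivity $b_i^{*}>0$ uniformly enough to survive the fixed-point passage, and confirming that the contraction constant is genuinely below one despite the freezing of neighbouring regimes, are the secondary difficulties I would budget the most care for.
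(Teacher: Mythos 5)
Your overall architecture coincides with the paper's: solve an auxiliary single-regime problem with an exponentially killed horizon and a concave terminal payoff by a double barrier (periodic dividend / classical injection) strategy, using scale functions and the excursion measure to get the value function and potential measure, pin down the barrier by a slope condition, verify via an HJB inequality, and then obtain the multi-regime result as the fixed point of a contraction on a weighted sup-norm space induced by the dynamic programming principle. That is exactly the paper's route, so the plan is sound in outline. Two points, however, would derail the execution as written.

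First, your variational inequalities are oriented the wrong way. The correct HJB system is $v'(x)\le\phi$ (a unit of deficit can be covered at cost $\phi$, so the marginal value of surplus is bounded \emph{above} by $\phi$, with equality only at the reflecting barrier $0$), together with the generator inequality $(\mathcal{A}-q)v+\lambda\omega+\gamma\max_{0\le z\le x}\bigl(z+v(x-z)-v(x)\bigr)\le 0$; there is no separate constraint $v'\le 1$ because the dividend is not a singular control here but enters through the $\gamma\max$ term. The slope conditions actually needed are $1\le V'\le\phi$ on $(0,b^{\omega})$ and $0\le V'\le 1$ on $[b^{\omega},\infty)$ (Lemma \ref{V.bw.parl}); the inequality $V'\ge\phi$ you propose is false everywhere except at $x=0$, and attempting to verify it would fail. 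Second, the fixed-point step needs more than a contraction estimate: the auxiliary theorem is only valid for terminal payoffs $\omega$ that are concave with $\omega'_+(0+)\le\phi$ and $\omega'_+(\infty)\in[0,1]$, so you must prove that the map sending $(v(\cdot,j))_{j}$ to the regime-$i$ payoff $\widehat{v}(\cdot,i)$ (which also incorporates the switching jump $J_{ij}$ and the bail-out correction $\phi(x+y)+v(0,j)$ on the overshoot event) preserves this class, and that the iterates and their limit stay inside it; this is the role of the class $\mathcal{C}$ and Lemma \ref{lem4.4} in the paper, and without it the existence of a strictly positive optimal barrier $b_i^{*}$ at each iteration and in the limit is not justified. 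Your proposed bounded-variation approximation of the unbounded-variation process is also not what the paper does (it instead passes to the limit $x\uparrow b$ in identities established for the given process), but that is a difference of implementation rather than a gap.
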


\section{Auxiliary Optimal Dividend Problem with Final Payoff}
\label{sec:aux}

In this section we consider an auxiliary optimal dividend and capital injection problem with final payoff.
We suppose that, without a control, the surplus process evolves as a spectrally positive L\'evy process $(X_t)_{t\geq0}$.
It is then assumed that dividend decisions on weather or not a lump sum of dividend payment is deducted from the surplus process can only be made at the arrival epochs $(T_{n})_{n\geq 1}$ of an independent Poisson process $(N_{t})_{t\geq0}$ with the intensity $\gamma>0$. In this case, the non-decreasing, cadl\'ag, pure jump cumulative dividend payment process  $(D_t)_{t\geq0}$ can be written as
\begin{eqnarray}
D_t=\int_0^{t}\Delta D_s\mathrm{d}N_s=\sum_{n=1}^{\infty}\Delta D_{T_n}\mathbf{1}_{\{T_n\leq t\}},\quad t\geq0,\nonumber
\end{eqnarray}
where $T_i$ be the arrival times of an independent Poisson process of intensity $\gamma$ and $\Delta D_{T_n}=D_{T_n}-D_{T_{n}-}$ represents the amount of dividends paid at time $T_n$. Regarding the cumulative capital injection process $(R_t)_{t\geq0}$, we assume that it is a non-decreasing and cadl\'ag process starting from $0$, i.e, $R_{0-}=0$.
Given a dividend and capital injection control strategy $\pi=(D^{\pi},R^{\pi})$, we can write the resulting surplus process under $\pi$, denoted by $U^{\pi}=(U_t^{\pi})_{t\geq0}$, as
\begin{eqnarray}
U_t^{\pi}=X_t-\sum_{n=1}^{\infty}\Delta D^{\pi}_{T_n}\mathbf{1}_{\{T_n\leq t\}}+R_t^{\pi},\quad t\geq0.\nonumber
\end{eqnarray}

Throughout this section, we assume that the payoff function $\omega$ is continuous and concave over $[0, \infty)$ with
$\omega^{\prime}_{+}(0+)\leq \phi$ and $\omega^{\prime}_{+}(\infty)\in[0,1]$, where $\omega_+^{\prime}(x)$  denotes the right derivative of $\omega$ at $x$.
A periodic dividend and capital injection strategy $\pi=(D^{\pi},R^{\pi})$ is said to be admissible if $U_t^{\pi}\geq0$ for all $t\geq0$.
Let's denote by $\Pi$ the set of all admissible periodic dividend and capital injection strategies.
For $\delta,\lambda>0$ and $\pi\in\Pi$, the value function of the expected terminal payoff added with the expected difference between the present value of dividends and costs of capital injection is written as
\begin{eqnarray}
\label{valefunctionofpi}
V_{\pi}^{\omega}(x)
\hspace{-0.3cm}&=&\hspace{-0.3cm}
\mathrm{E}_x\bigg[\int_0^{e_{\lambda}}e^{-\delta t}\mathrm{d}D_{t}^{\pi}-\phi\int_0^{e_{\lambda}}e^{-\delta t}\mathrm{d}R_t^{\pi}+e^{-\delta e_{\lambda}}\omega(U_{e_{\lambda}}^{\pi})\bigg]
\nonumber\\
\hspace{-0.3cm}&=&\hspace{-0.3cm}
\mathrm{E}_x\bigg[\int_0^{\infty}\lambda e^{-\lambda t}\bigg[\sum_{n=1}^{\infty}e^{-\delta T_n}\Delta D^{\pi}_{T_n}\mathbf{1}_{\{T_n\leq t\}}-\phi\int_0^{t}e^{-\delta s}\mathrm{d}R_s^{\pi}+e^{-\delta t}\omega(U_{t}^{\pi})\bigg]\mathrm{d}t\bigg]
\nonumber\\
\hspace{-0.3cm}&=&\hspace{-0.3cm}
\mathrm{E}_x\bigg[\sum_{n=1}^{\infty}e^{-q T_n}\Delta D^{\pi}_{T_n}-\phi\int_0^{\infty}e^{-qt}\mathrm{d}R_t^{\pi}+\lambda\int_0^{\infty}e^{-qt}\omega(U_{t}^{\pi})\mathrm{d}t\bigg],
\end{eqnarray}
where $q=\delta+\lambda$.
The purpose of the auxiliary stochastic control problem is to find the optimal dividend and capital injection strategy $\pi^{*}\in\Pi$ in the sense that it dominates all other admissible strategies. The value function of the auxiliary stochastic control problem is then given by
\begin{eqnarray}\label{au.problem}
V_{\pi*}^{\omega}(x)=\sup_{\pi} V_{\pi}^{\omega}(x)
\hspace{-0.3cm}&&\hspace{-0.3cm}\,\,\,
\text{subject to\ \  $U^{\pi}_{t}=X_{t}-D^{\pi}_{t}+R^{\pi}_{t}\geq 0$ for all $t\geq 0$,}
\nonumber\\
\hspace{-0.3cm}&&\hspace{-0.3cm}\,\,\,
\text{both $D^{\pi}_{t}$ and $R^{\pi}_{t}$ are non-decreasing, c\`{a}dl\`{a}g and adapted processes,}
\nonumber\\
\hspace{-0.2cm}&&\hspace{-0.2cm}
D^{\pi}_t=\int_0^t\Delta D^{\pi}_s\mathrm{d}N_s,\text{ $t\geq0$, for a Poisson process $(N_t)_{t\geq0}$ with intensity $\gamma>0$, }
\nonumber\\
\hspace{-0.3cm}&&\hspace{-0.3cm}\,\,\,
\text{$D^{\pi}_{0-}=R^{\pi}_{0-}=0$, and $\int_0^{\infty}e^{-q t}\mathrm{d}R^{\pi}_t<\infty$, $\mathrm{P}_{x}$-almost surely}.
\end{eqnarray}

We first consider a smaller subset of admissible dividend and capital injection strategies. We note that the time value of money (i.e., $q>0$), it seems reasonable to inject capitals as late as possible. In addition, since there are transaction costs charged for per unit of capitals injected ($\phi>1$), whenever capitals injection is required, the injected capital should be the amount to keep the surplus process non-negative, that is, the surplus process will reflect from below at 0. Therefore, by the above intuitive arguments, we give the following Lemma \ref{R.continu.}. The proof is essentially similar to that of Lemma 4.2 in \cite{Wang22.b} and is hence omitted.
\begin{lem}
\label{R.continu.}
The optimal dividend and capital injection process $\{(D_{t}, R_{t});t\geq 0\}$ for the optimization problem is represented as \eqref{au.problem} is such that $0\leq \Delta D_{t}\leq X_{t-}$ and
\begin{eqnarray}\label{R.form}
R_{t}=-\inf_{s\leq t}(X_{s}-D_{s})\wedge 0.
\end{eqnarray}
In particular, $\{R_{t};t\geq 0\}$ is continuous.
\end{lem}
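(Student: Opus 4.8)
The plan is to establish the two assertions of Lemma \ref{R.continu.} separately: first, that any optimal strategy may be taken to inject capital only in the minimal amount needed to keep the surplus non-negative, which forces the representation \eqref{R.form}; and second, that the resulting $R$ is continuous. I would argue by a pathwise improvement (exchange) argument, showing that any admissible strategy that injects ``too much'' or ``too early'' can be modified into another admissible strategy whose value is at least as large, so that we lose no generality in restricting to the reflected form. Throughout I would use that $\phi>1$ and $q=\delta+\lambda>0$, which are precisely the two structural facts making over-injection and early injection strictly wasteful.

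First I would prove that $R_t = -\inf_{s\le t}(X_s - D_s)\wedge 0$ is forced. Fix an admissible $\pi=(D,R)$ with surplus $U_t = X_t - D_t + R_t \ge 0$. Define the candidate $\widehat R_t := -\inf_{s\le t}(X_s - D_s)\wedge 0$, the minimal non-decreasing adapted process keeping $X-D+\widehat R$ non-negative (this is the classical Skorokhod reflection of $X-D$ at $0$, using that $X$ has no negative jumps so down-crossings of $0$ occur continuously). Since any admissible $R$ must dominate $\widehat R$ pointwise (otherwise $U$ would go negative), we have $\mathrm{d}R_t \ge \mathrm{d}\widehat R_t$ in the sense of measures, path by path. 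Because the capital-injection term enters \eqref{valefunctionofpi} with the \emph{negative} coefficient $-\phi\int_0^\infty e^{-qt}\mathrm{d}R_t$ and $\phi>0$, while the dividend and payoff terms depend on $(D,R)$ only through $U$, replacing $R$ by $\widehat R$ (keeping the same $D$) does not decrease the expected discounted dividends nor the terminal-payoff term and strictly reduces the injection cost whenever $R\neq\widehat R$. I would make this precise by comparing the two surplus processes and invoking monotonicity of $\omega$ together with $\Delta D_t \le X_{t-}$ (so that paying dividends never requires extra injection at a jump time). The inequality $0\le \Delta D_t \le X_{t-}$ itself follows because paying more than the available pre-dividend surplus would either violate admissibility or force an immediate compensating injection at cost $\phi>1$ per unit against a dividend credit of only one unit, hence is suboptimal.

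Second I would establish continuity of $\widehat R$. This is a pathwise property of the Skorokhod reflection map applied to $X-D$: the reflected-from-below process increases only on the (closed) set of times where $X_t - D_t + \widehat R_t = 0$, and at such a boundary time the driving process $X-D$ cannot jump downward, since $X$ is spectrally positive (no negative jumps) and the jumps of $-D$ are the dividend decrements $-\Delta D_{T_n}$, which occur only at the isolated Poisson times $T_n$ and there satisfy $\Delta D_{T_n}\le X_{T_n-}$. A downward jump of $X-D$ at a reflection time would create a positive jump of $\widehat R$; but the constraint $\Delta D_{T_n}\le U_{T_n-}$ (equivalently $\Delta D_{T_n}\le X_{T_n-}$ on the reflecting boundary, after accounting for accumulated injection) guarantees the post-jump value stays non-negative, so no atom of $\widehat R$ is ever needed. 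Hence $\widehat R$ has no jumps and, being non-decreasing and càdlàg, is continuous.

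The main obstacle is the careful comparison in the exchange argument: I must verify that substituting $\widehat R$ for $R$ simultaneously (i) leaves the strategy admissible, (ii) does not reduce the dividend stream, and (iii) does not reduce the terminal payoff $\omega(U_{e_\lambda})$, all while strictly lowering the injection cost. The subtlety is that lowering $R$ lowers the surplus $U$ pointwise, which through the \emph{concave} payoff $\omega$ and the monotonicity assumption $\omega'_+(\infty)\in[0,1]$ could in principle lower the terminal term; the resolution is that the marginal value of one unit of surplus is at most $1<\phi$ (encoded in the constraints on $\omega$ and in $\phi>1$), so the saved injection cost always outweighs any loss in dividends-plus-payoff. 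Since this is exactly the mechanism already worked out in the proof of Lemma 4.2 of \cite{Wang22.b}, I would indicate that the argument transfers verbatim after replacing the continuous-observation dividend process there with the present periodic (pure-jump) dividend process $D_t=\int_0^t \Delta D_s\,\mathrm{d}N_s$, noting that this replacement only restricts the admissible $D$ and does not affect the reflection structure of the optimal $R$.
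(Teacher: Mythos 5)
Your strategy---replace $R$ by the minimal Skorokhod reflection $\widehat R_t=-\inf_{s\le t}(X_s-D_s)\wedge 0$, show by an exchange argument that this cannot decrease the value because over-injection is penalized at rate $\phi>1$, and deduce continuity from the absence of negative jumps of $X$ together with the bound on $\Delta D_{T_n}$---is exactly the mechanism the paper intends: the paper omits the proof and defers to Lemma 4.2 of \cite{Wang22.b}, which runs this same comparison. Two steps of your write-up, however, are false as stated and need the standard repair. First, admissibility ($U_t\ge 0$ plus monotonicity of $R$) yields only the \emph{pointwise} domination $R_t\ge\widehat R_t$; the claim that $\mathrm{d}R_t\ge\mathrm{d}\widehat R_t$ as measures is wrong in general (an admissible $R$ may jump early and then stay flat while $\widehat R$ keeps increasing). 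The correct route to $\int_0^\infty e^{-qt}\mathrm{d}R_t\ge\int_0^\infty e^{-qt}\mathrm{d}\widehat R_t$ is integration by parts, $\int_{[0,T]}e^{-qt}\mathrm{d}R_t=e^{-qT}R_T+q\int_0^T e^{-qt}R_t\,\mathrm{d}t$, which uses only $R_t\ge\widehat R_t\ge 0$. Second, your accounting of the payoff loss is off: since $\widehat U_t\le U_t$ and $\omega$ is non-decreasing, the running payoff term \emph{does} decrease, and the marginal value of surplus through $\omega$ is bounded by $\omega'_+(0+)\le\phi$, not by $1$ (the bound $1$ holds only at infinity). The exchange is nevertheless favourable because the loss is at most $\lambda\,\omega'_+(0+)\int_0^\infty e^{-qt}(R_t-\widehat R_t)\,\mathrm{d}t\le\lambda\phi\int_0^\infty e^{-qt}(R_t-\widehat R_t)\,\mathrm{d}t$, while the injection saving is at least $\phi q\int_0^\infty e^{-qt}(R_t-\widehat R_t)\,\mathrm{d}t$ with $q=\delta+\lambda>\lambda$; i.e.\ the decisive fact is the strict discounting $\delta>0$ combined with $\omega'_+\le\phi$, not merely $1<\phi$.

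With these two corrections, and keeping the order you indicate (first rule out $\Delta D_{T_n}$ exceeding the available pre-dividend surplus, since a unit of dividend financed by a simultaneous injection nets $1-\phi<0$; only then does $\widehat R$ have no atoms at the Poisson times, which is what the continuity claim requires), your argument goes through and matches the one the paper imports from \cite{Wang22.b}.
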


We conjecture that the optimality of the control problem \eqref{au.problem} can be attained by a double barrier strategy, associated to which the two dimensional cumulative dividend and capital injection process is denoted as $(D_t^{0,b},R_t^{0,b})_{t\geq0}$ with $b\in(0,\infty)$. Under the strategy $(D_t^{0,b},R_t^{0,b})_{t\geq0}$, the surplus process is observed at each arrival epoch of an independent Poisson process, whenever the surplus is observed to be above $b$, the overshoot is paid out as dividends; while capitals are injected into the surplus process to push it upward to $0$ whenever it is about to down-cross $0$. After adopting the strategy $(D_t^{0,b},R_t^{0,b})_{t\geq0}$, the resulting surplus process reads as
$$U_{t}^{0,b}:=X_t-D_{t}^{0,b}+R_{t}^{0,b},\quad t\geq0,$$
where
$$D_t^{0,b}=\sum_{n=1}^{\infty}\Big((U_{T_{n}-}^{0,b}+\Delta X_{T_{n}}-b)\vee0\Big)\mathbf{1}_{\{T_{n}\leq t\}}
\quad \text{
and }\quad
R_t^{0,b}=-\inf_{0\leq s< t}\Big((X_s-D_s^{0,b})\wedge0\Big),\quad t\geq0.$$
In addition, denote by down-crossing and up-crossing times of $U_t^{0,b}$ respectively by
$$\kappa_{a}^{-}:=\inf\{t\geq 0; U^{0,b}_{t}<a\}\quad \text{and }\,\,\,\kappa_{b}^{+}:=\inf\{t\geq 0; U^{0,b}_{t}>b\}.$$ Let us denote the performance function (see, \eqref{valefunctionofpi}) for the periodic-classical barrier dividend and capital injection strategy $\pi=(D_t^{0,b},R_t^{0,b})_{t\geq0}$ as
$$V_{0,b}^{\omega}(x)=\mathrm{E}_x\bigg[\int_0^{\infty}e^{-qt}\mathrm{d}D^{0,b}_t-\phi\int_0^{\infty}e^{-qt}\mathrm{d}R_t^{0,b}+\lambda\int_0^{\infty}e^{-qt}\omega(U^{0,b}_{t})\mathrm{d}t\bigg].$$
For further computations, 
define 
$\widetilde{Y}_t=X_{t}-\underline{X}_{t}\wedge0$ with $\underline{X}_{t}=\inf_{0\leq s\leq t}X_{s}$, and $\sigma_{a}^{+}:=\inf\{t\geq0: \widetilde{Y}_t>a\}$ for $a>0$. 
Actually, $\widetilde{Y}$ can be interpreted as a surplus process with capital injection that prevents the company from going bankruptcy, where $-\underline{X}_{t}\wedge0$ represents the total amount of capitals injected during the time period $[0,t]$.
Furthermore, let
$T_b^+:=\min\{T_i:\widetilde{Y}_{T_i}>b\}$
be the first time the process $\widetilde{Y}_{t}$ is observed to be above $b$ under Poisson observation, i.e., the first time when a lump sum of dividend is paid out in the surplus process $U_t^{0,b}$.


Since it has been conjectured that our auxiliary dividend control problem \eqref{au.problem} is solved by a double barrier peoriodoc dividend and capital injection strategy, we need to find an expression for the value function of a double barrier strategy. We decompose the corresponding computations into the upcoming Lemmas \ref{lem.D.R}-\ref{lem.w}.
In particular, the following Lemma \ref{lem.D.R} gives an expression for the expected discounted total dividend payment minus the expected discounted total capital injection under a double barrier dividend and capital injection strategy.

\begin{lem}\label{lem.D.R}
For $q>0$, $b>0$, and $\phi>1$, we have
\begin{eqnarray}
\label{exp.dif.div.cap.}
\mathrm{E}_x\bigg[\int_0^{\infty}e^{-qt}\mathrm{d}D^{0,b}_t-\phi\int_0^{\infty}e^{-qt}\mathrm{d}R_t^{0,b}\bigg]
\hspace{-0.3cm}&=&\hspace{-0.3cm}
-\frac{\gamma}{q+\gamma}\left[\overline{Z}_q(b-x)+\frac{\psi^{\prime}(0+)}{q}\right]
\nonumber\\
\hspace{-0.3cm}&&\hspace{-3.5cm}
+
\frac{\left(\gamma Z_q(b)-\phi(q+\gamma)\right)\left[Z_q(b-x,\Phi_{q+\gamma})+\frac{\gamma}{q}Z_q(b-x)\right]}{(q+\gamma)\Phi_{q+\gamma}Z_q(b,\Phi_{q+\gamma})}
,\quad x\in(0,\infty).
\end{eqnarray}
\end{lem}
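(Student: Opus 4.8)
The plan is to reduce the computation to a single regeneration cycle of the double barrier strategy and then evaluate the resulting one-cycle functionals by fluctuation theory and excursion theory for the reflected process $\widetilde Y$. Write $f(x)$ for the left-hand side of \eqref{exp.dif.div.cap.}. Before the first Poisson-observed up-crossing $T_b^+$ the controlled surplus $U^{0,b}$ coincides with $\widetilde Y$, which by \eqref{R.form} accrues capital injection only as the local time that $\widetilde Y$ spends at $0$; at $T_b^+$ the overshoot $\widetilde Y_{T_b^+}-b$ is paid out, so that the surplus is reset to $b$. Since the observation Poisson clock is time-homogeneous, the strong Markov property at $T_b^+$ yields the renewal equation
$$f(x)=\mathrm E_x\big[e^{-qT_b^+}(\widetilde Y_{T_b^+}-b)\big]-\phi\,\mathrm E_x\Big[\int_0^{T_b^+}e^{-qt}\,\mathrm dR_t\Big]+\mathrm E_x\big[e^{-qT_b^+}\big]\,f(b).$$
Setting $x=b$ turns this into a scalar equation determining $f(b)$ explicitly, and substituting $f(b)$ back recovers $f(x)$. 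The whole problem therefore reduces to three building blocks: the Laplace transform $\mathrm E_x[e^{-qT_b^+}]$, the discounted expected overshoot $\mathrm E_x[e^{-qT_b^+}(\widetilde Y_{T_b^+}-b)]$, and the discounted expected capital injection $\mathrm E_x[\int_0^{T_b^+}e^{-qt}\,\mathrm dR_t]$.

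For these building blocks I would first observe that, by thinning the observation Poisson process, $T_b^+$ is the first arrival of a Cox process with intensity $\gamma\mathbf 1_{\{\widetilde Y_t>b\}}\,\mathrm dt$; equivalently, while $\widetilde Y$ sits above $b$ the process is killed at the additional rate $\gamma$, so the effective discount rate above $b$ is $q+\gamma$. This is the source of the exponent $\Phi_{q+\gamma}$ and of the second scale function $Z_q(\cdot,\Phi_{q+\gamma})$. I would then use the natural decomposition at $\tau_0^-$: the spectrally positive process creeps downward, so $\widetilde Y=X$ on $[0,\tau_0^-)$ with no capital injection, and the contribution of a Poisson catch above $b$ during this initial sojourn is governed by the two-sided exit identities \eqref{fluc.1}--\eqref{fluc.2} for $X$ on $[0,b]$, augmented by the killing at rate $\gamma$ above $b$. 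After $\tau_0^-$ the process $\widetilde Y$ performs excursions away from $0$, indexed by the local time $L$; summing over these excursions through the characteristic measure $n$ — where each excursion of height $\overline\varepsilon>b$ contributes according to the Poisson catch during its occupation above $b$, detected via $\rho_b^+$, and the injection accrues as $\mathrm dL$ — produces the terms in $\overline Z_q(b-x)$, $Z_q(b-x)$ and $Z_q(b-x,\Phi_{q+\gamma})$, the mean-type constant $\psi'(0+)/q$ arising from the injection functional.

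Because the characteristic measure $n$ and the behaviour of $\widetilde Y$ at $0$ differ according to path type, I would carry out the $n$-integrals over the excursion space separately in the two cases. In the bounded variation case, where $W_q(0+)=1/\overline c$, the excursions leave $0$ through the negative drift and the required $n$-integrals involving $\rho_b^+$ and $\overline\varepsilon$ can be evaluated directly using the scale functions and \eqref{fluc.1}--\eqref{fluc.2}. In the unbounded variation case, where $W_q(0+)=0$, I would instead obtain the formula by approximation, for instance approximating $X$ by a sequence of bounded variation spectrally positive L\'evy processes and passing to the limit with the help of the continuity of $W_q$, $Z_q$, $\overline Z_q$ and $Z_q(\cdot,\Phi_{q+\gamma})$ recorded in the preliminaries. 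Assembling the three building blocks into the renewal equation and simplifying with $Z_q(0,\Phi_{q+\gamma})=Z_q(0)=1$ and $\overline Z_q(0)=0$ then yields \eqref{exp.dif.div.cap.}.

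The main obstacle is the excursion-theoretic evaluation of the discounted overshoot $\mathrm E_x[e^{-qT_b^+}(\widetilde Y_{T_b^+}-b)]$: one must compute, under $n$, the joint behaviour of the value of the excursion at the Poisson catch time and its excess above $b$, which requires the resolvent of the excursion killed at rate $\gamma$ above $b$ together with careful control of the overshoot distribution. This is precisely the step that has no counterpart in the spectrally negative analysis of \cite{Mata22}, and the unbounded variation case magnifies the difficulty, since the direct $n$-integrals are rigorous only in the bounded variation setting and must be transferred to the general case by the limiting argument.
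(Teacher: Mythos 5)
Your overall architecture coincides with the paper's: decompose at the first Poisson-observed up-crossing $T_b^+$, note that $U^{0,b}=\widetilde Y$ up to that time, write the renewal identity $f(x)=\mathrm E_x[e^{-qT_b^+}(\widetilde Y_{T_b^+}-b)]-\phi\,\mathrm E_x[\int_0^{T_b^+}e^{-qt}\mathrm dR_t]+\mathrm E_x[e^{-qT_b^+}]f(b)$, set $x=b$ to solve for $f(b)$, and substitute back. Where you diverge is in how the three building blocks are obtained, and this is where your plan carries almost all of the unexecuted work. The paper does not compute them by fresh excursion-theoretic integrals against $n$ with a bounded/unbounded variation case split. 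Instead it observes that $T_b^+$ is exactly the Poisson-observed ruin time of the dual process $b-\widetilde Y_t=(b-X_t)-\sup_{s\le t}((b-X_s)-b)\vee 0$, i.e.\ the spectrally negative process $b-X$ controlled by a classical barrier dividend strategy at level $b$ and observed below $0$ at Poisson times; with this identification, $\mathrm E_x[e^{-qT_b^+}]$ and $\mathrm E_x[\int_0^{T_b^+}e^{-qt}\mathrm dR_t^{0,b}]$ are precisely identities (23) and (27) of Albrecher, Ivanovs and Zhou (2016) and can simply be quoted. The discounted overshoot, which you single out as the main obstacle and propose to attack through the resolvent of the excursion killed at rate $\gamma$ above $b$, is obtained in the paper with no excursion work at all: one quotes the joint Laplace transform $\mathrm E_x[e^{-qT_b^+ +\theta(b-\widetilde Y(T_b^+))}]$ from the same source and differentiates in $\theta$ at $\theta=0$. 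This removes both the hard $n$-integral and the need for a bounded-to-unbounded-variation limiting argument in this lemma (that machinery is reserved for the potential-measure computation in the later lemmas). Finally, note that your one-cycle equation is really only justified for $x\in[0,b]$, since the quoted identities hold there; the paper treats $x\in(b,\infty)$ by a separate first-step decomposition at $T_1\wedge\tau_b^-$ using Lemma 3.4 of Zhao, Chen and Yang (2017), a branch your sketch omits. So your route is not wrong, but as written it defers exactly the computation that makes or breaks the proof, and the paper's shortcut via the dual-barrier identification and the $\theta$-differentiation is the idea you are missing.
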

\begin{proof}
Denote by $f(x)$ the left hand side of \eqref{exp.dif.div.cap.}. We recall from Lemma 3.4 of \cite{Zhao17} that
\begin{eqnarray}
\mathrm{E}_x\left[e^{-qT_1}(X_{T_1}-b)\mathbf{1}_{\{T_1<\tau_b^-\}}\right]
\hspace{-0.3cm}&=&\hspace{-0.3cm}
\frac{\gamma}{q+\gamma}\left[x-b-\frac{\psi^{\prime}(0+)}{q+\gamma}\left(1-e^{-\Phi_{q+\gamma}(x-b)}\right)\right],\quad x\in(b,\infty),
\nonumber\\
\mathrm{E}_x\left[e^{-q(T_1\wedge\tau_b^-)}\right]
\hspace{-0.3cm}&=&\hspace{-0.3cm}
\frac{\gamma}{q+\gamma}+\frac{q}{q+\gamma}e^{-\Phi_{q+\gamma}(x-b)},\quad x\in(b,\infty),\label{T1wed.tau.b-}\nonumber
\end{eqnarray}
which together with the strong Markov property leads to
\begin{eqnarray}\label{f.b.1}
\hspace{-0.8cm}
f(x)
\hspace{-0.3cm}&=&\hspace{-0.3cm}
\mathrm{E}_x\Big[e^{-qT_1}(X_{T_1}-b)\mathbf{1}_{\{T_1<\tau_b^-\}}\Big]+\mathrm{E}_x\Big[e^{-q(T_1\wedge\tau_b^-)}\Big]f(b)
\nonumber\\
\hspace{-0.3cm}&=&\hspace{-0.3cm}
\frac{\gamma}{q+\gamma}\left[x-b-\frac{\psi^{\prime}(0+)}{q+\gamma}\left(1-e^{-\Phi_{q+\gamma}(x-b)}\right)\right]+\left[\frac{\gamma}{q+\gamma}+\frac{q}{q+\gamma}e^{-\Phi_{q+\gamma}(x-b)}\right]f(b),\,\, x\in(b,\infty).
\end{eqnarray}
It is not hard to verify that
$U_{t}^{0,b}=\widetilde{Y}_{t}$ and $
R_{t}^{0,b}=-\inf_{s\leq t}X_{s}\wedge0=\sup_{s\leq t}((b-X_{s})-b)\vee 0$ for $t\leq T_{b}^{+}$, where the term $\sup_{s\leq t}((b-X_{s})-b)\vee 0$ can be interpreted as the amount of dividends paid over the time interval $[0,t]$ under a barrier dividend strategy with barrier $b$, given that the surplus process free of dividends evolves as $b-X_{t}$;
and $T_{b}^{+}$ is identical to the first instant the surplus process $(b-X_{t})-\sup_{s\leq t}((b-X_{s})-b)\vee 0=b-\widetilde{Y}_{t}$ with dividends paid (out of $b-X_{t}$) according to the barrier dividend strategy with barrier $b$ is observed to be below $0$ under the Poisson observation, i.e., $T_{b}^{+}
=\inf\{T_{i}; (b-X_{T_{i}})-\sup_{s\leq T_{i}}((b-X_{s})-b)\vee 0<0\}$. Hence, by (23) and (27) of \cite{Albrecher16}, we have
\begin{eqnarray}
\label{V.b.3}\mathrm{E}_x\left[e^{-qT_b^+}\right]\hspace{-0.3cm}&=&\hspace{-0.3cm}
\frac{\gamma}{\gamma+q}\left[Z_q(b-x)-qW_q(b)\frac{Z_q(b-x,\Phi_{q+\gamma})}{Z^{\prime}_q(b,\Phi_{q+\gamma})}\right],\quad x\in[0,b],
\\
\mathrm{E}_x\left[\int_0^{T_b^+}e^{-qt}\mathrm{d}R_t^{0,b}\right]
\hspace{-0.3cm}&=&\hspace{-0.3cm}
\frac{Z_q(b-x,\Phi_{q+\gamma})}{Z^{\prime}_q(b,\Phi_{q+\gamma})},\quad x\in[0,b],
\\
\mathrm{E}_{x}\Big[e^{-qT_{b}^{+} +\theta (b-\widetilde{Y}(T_{b}^{+}))}\Big]
\hspace{-0.3cm}&=&\hspace{-0.3cm}
\frac{\gamma\left[Z_q(b-x,\theta)+Z_q(b-x,\Phi_{q+\gamma})\frac{W_q(b)(\psi(\theta)-q)-\theta Z_q(b,\theta)}{Z^{\prime}_q(b,\Phi_{q+\gamma})}\right]}{q+\gamma-\psi(\theta)}\quad x\in[0,b].\label{joint.L.}
\end{eqnarray}
Differentiating the both sides of \eqref{joint.L.} with respect to $\theta$ and then letting $\theta\rightarrow 0$ yields
\begin{eqnarray}\label{V.b.4}
\hspace{-0.3cm}
\mathrm{E}_{x}\left[e^{-qT_{b}^{+}}\left(b-\widetilde{Y}(T_{b}^{+})\right)\right]
\hspace{-0.3cm}&=&\hspace{-0.3cm}
\frac{\gamma\psi^{\prime}(0+)}{(q+\gamma)^2}\left[Z_q(b-x)-Z_q(b-x,\Phi_{q+\gamma})\frac{qW_q(b)}{Z_q^{\prime}(b,\Phi_{q+\gamma})}\right]
+\frac{\gamma}{q+\gamma}\nonumber\\
\hspace{-0.3cm}&&\hspace{-4cm}
\times\left[\overline{Z}_q(b-x)-\psi^{\prime}(0+)\overline{W}_q(b-x)
+Z_q(b-x,\Phi_{q+\gamma})\frac{W_q(b)\psi^{\prime}(0+)-Z_q(b)}{Z_q^{\prime}(b,\Phi_{q+\gamma})}
\right],\quad x\in[0,b].
\end{eqnarray}
Combing (\ref{V.b.3})-(\ref{V.b.4}) and the strong Markov property, we have
\begin{eqnarray}\label{f.b.2}
\hspace{-3cm}
f(x)
\hspace{-0.3cm}&=&\hspace{-0.3cm}
-\phi\mathrm{E}_x\left[\int_0^{T_b^+}e^{-qt}\mathrm{d}R^{0,b}_t\right]+\mathrm{E}_x\Big[e^{-qT^+_b}\left(\widetilde{Y}(T_{b}^{+})-b+f(b)\right)\Big]
\nonumber\\
\hspace{-0.3cm}&=&\hspace{-0.3cm}
-\phi\frac{Z_q(b-x,\Phi_{q+\gamma})}{Z^{\prime}_q(b,\Phi_{q+\gamma})}
+\frac{\gamma}{q+\gamma}\left[\left[Z_q(b-x)-qW_q(b)\frac{Z_q(b-x,\Phi_{q+\gamma})}{Z^{\prime}_q(b,\Phi_{q+\gamma})}\right]\left(f(b)-\frac{\psi^{\prime}(0+)}{q+\gamma}\right)\right.
\nonumber\\
\hspace{-0.3cm}&&\hspace{-0.3cm}
-\left.\left[\overline{Z}_q(b-x)-\psi^{\prime}(0+)\overline{W}_q(b-x)
+Z_q(b-x,\Phi_{q+\gamma})\frac{W_q(b)\psi^{\prime}(0+)-Z_q(b)}{Z_q^{\prime}(b,\Phi_{q+\gamma})}
\right]\right],
\quad x\in[0,b].
\end{eqnarray}
Letting $x=b$ in (\ref{f.b.2}) gives rise to
\begin{eqnarray}\label{Vdr.b}
f(b)
\hspace{-0.3cm}&=&\hspace{-0.3cm}
\frac{\gamma Z_q(b)-\phi(q+\gamma)}{q\Phi_{q+\gamma}Z_q(b,\Phi_{q+\gamma})}-\frac{\gamma\psi^{\prime}(0+)}{q(q+\gamma)},\nonumber
\end{eqnarray}
substituting which into (\ref{f.b.1}) and (\ref{f.b.2}) yields the desired result. The proof is complete.
\end{proof}

To obtain an expression for the value function associated with a double barrier periodic dividend and capital injection strategy, we still need to find an expression for the potential measure of the spectrally positive L\'evy process controlled by a double barrier periodic dividend and capital injection strategy. To this end, we need to make some preparations in the following Lemmas \ref{lem2.1}-\ref{2.2}. In Lemma \ref{lem2.1} below, we compute an integral with respect to the excursion measure $n$, where the integrand involves the scale function, the excursion height as well as the first passage time of excursion.

\begin{lem}
\label{lem2.1}
For $x\in(0,\infty)$, we have
\begin{eqnarray}
\hspace{-0.3cm}&&\hspace{-0.3cm}
n\left(e^{-p \rho_{x}^{+}(\varepsilon)}
W_{q}(
x-\epsilon(\rho_{x}^{+})+y
)
\mathbf{1}_{\{\overline{\epsilon}
\geq x\}}\right)
\nonumber\\
\hspace{-0.3cm}&=&\hspace{-0.3cm}
-W_{p}(x)\frac{\mathrm{d}}{\mathrm{d}x}\left[\frac{W_{p}(x+y)+(q-p)\int_{0}^{y}W_{p}(x+y-z)W_{q}(z)\mathrm{d}z}
{W_{p}(x)}\right].
\nonumber
\end{eqnarray}
\end{lem}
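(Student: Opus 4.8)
The plan is to reduce the statement to a computation of the $p$-discounted overshoot distribution of those excursions that reach height $x$, and then to convolve this distribution against $W_{q}$. Observe first that $\{\overline{\varepsilon}\geq x\}=\{\rho_{x}^{+}<\zeta\}$ and that the integrand is measurable with respect to the excursion up to $\rho_{x}^{+}$, so only the pre-$\rho_{x}^{+}$ part of the excursion matters. Since $X$ is spectrally positive, during an excursion the reflected process moves like $X$ (the running infimum being frozen), so the upward crossing of level $x$ happens by a jump, together with a creeping component only when $\sigma>0$. Writing $O:=\varepsilon(\rho_{x}^{+})-x\geq 0$ for the overshoot, the integrand becomes $W_{q}(x-\varepsilon(\rho_{x}^{+})+y)=W_{q}(y-O)$, which vanishes unless $O<y$ because $W_{q}\equiv 0$ on $(-\infty,0)$. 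Hence, setting $\nu_{p}(x,\mathrm{d}u):=n\big(e^{-p\rho_{x}^{+}}\mathbf{1}_{\{\overline{\varepsilon}\geq x\}};\,O\in\mathrm{d}u\big)$, the left-hand side of the claim equals $\int_{[0,y)}W_{q}(y-u)\,\nu_{p}(x,\mathrm{d}u)$, and the task splits into (i) identifying $\nu_{p}(x,\cdot)$ in scale-function form, and (ii) performing the convolution against $W_{q}$.

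For step (i) I would apply the compensation formula to the jumps of the excursion, splitting according to the jump that first carries the excursion above $x$: before that jump the excursion stays in $(0,x]$, and the crossing jump is governed by $\upsilon$. This expresses $\nu_{p}(x,\mathrm{d}u)$ through the $p$-discounted occupation measure of the excursion below $x$, namely $m_{p}(x,\mathrm{d}w):=n\big(\int_{0}^{\rho_{x}^{+}}e^{-pt}\mathbf{1}_{\{\varepsilon(t)\in\mathrm{d}w\}}\,\mathrm{d}t\big)$, integrated against $\upsilon(\mathrm{d}r)$ over jumps $r$ with $w+r>x$ and $w+r-x\in\mathrm{d}u$, plus an atom at $u=0$ coming from creeping when $\sigma>0$. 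The occupation measure $m_{p}(x,\cdot)$ I would obtain from the two-sided exit identities \eqref{fluc.1}--\eqref{fluc.2}, i.e.\ from the $p$-resolvent of $X$ killed on leaving $[0,x]$, together with the standard description of the excursion measure $n$ of the reflected process as a normalized limit (through $W_{p}$, with local time $-\underline{X}_{t}$) of the law of $X$ started just above $0$. This is precisely where the operator $-W_{p}(x)\frac{\mathrm{d}}{\mathrm{d}x}\frac{1}{W_{p}(x)}(\cdot)$ in the claim originates, since it is the form produced by differentiating the exit ratio $W_{p}(x-\cdot)/W_{p}(x)$ at the lower boundary.

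For step (ii) I would substitute the resulting expression for $\nu_{p}(x,\mathrm{d}u)$ into $\int_{[0,y)}W_{q}(y-u)\,\nu_{p}(x,\mathrm{d}u)$ and simplify. The two nested integrations (first against $\upsilon$, then against $W_{q}$) are collapsed using the scale-function convolution identity $W_{q}(a)=W_{p}(a)+(q-p)\int_{0}^{a}W_{p}(a-z)W_{q}(z)\,\mathrm{d}z$; this is exactly what generates the term $(q-p)\int_{0}^{y}W_{p}(x+y-z)W_{q}(z)\,\mathrm{d}z$ in the numerator of the claim, while the remaining contributions assemble into $W_{p}(x+y)$ and the boundary factor $W_{p}'(x)/W_{p}(x)$. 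Setting $\Theta(x,y):=W_{p}(x+y)+(q-p)\int_{0}^{y}W_{p}(x+y-z)W_{q}(z)\,\mathrm{d}z$, one checks that the assembled expression $-W_{p}'(x+y)-(q-p)\int_{0}^{y}W_{p}'(x+y-z)W_{q}(z)\,\mathrm{d}z+\frac{W_{p}'(x)}{W_{p}(x)}\big[W_{p}(x+y)+(q-p)\int_{0}^{y}W_{p}(x+y-z)W_{q}(z)\,\mathrm{d}z\big]$ is precisely the expansion of $-W_{p}(x)\frac{\mathrm{d}}{\mathrm{d}x}\big[\Theta(x,y)/W_{p}(x)\big]$, which is the right-hand side of the claim; the boundary case $y=0$ (both sides vanish) provides a useful sanity check.

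The hard part will be step (i): pinning down the excursion occupation/overshoot measure in exact scale-function form with the correct normalization of $n$, and correctly incorporating the creeping contribution when $\sigma>0$. Because the bounded- and unbounded-variation cases differ in the boundary behaviour of the scale function (recall $W_{p}(0+)=1/\overline{c}$ versus $W_{p}(0+)=0$) and in whether the crossing of $x$ is purely by jumps, I expect to treat them separately, using the compensation formula and excursion theory directly in the bounded-variation case and an additional approximation and limiting argument in the unbounded-variation case, exactly as the introduction anticipates. By contrast, the convolution bookkeeping in step (ii) is routine once $\nu_{p}(x,\cdot)$ is in hand.
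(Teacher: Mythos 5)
Your strategy is viable but genuinely different from the paper's, and the difference matters. You work \emph{bottom-up}: reduce the claim to the $p$-discounted overshoot law $\nu_{p}(x,\cdot)$ of an excursion over level $x$, build that law from the L\'evy system (occupation measure of the excursion below $x$ integrated against $\upsilon$, plus a creeping atom), and then convolve with $W_{q}$. The paper works \emph{top-down}: it first computes the global first-passage functional $\mathrm{E}_{-x}\big[e^{-p\tau_{0}^{+}}W_{q}(-X_{\tau_{0}^{+}}+y)\mathbf{1}_{\{\tau_{0}^{+}<\infty\}}\big]$ purely from the two-sided exit identity \eqref{fluc.1} (which, applied at a shifted level and combined with the strong Markov property, \emph{already} produces the overshoot functional convolved with $W_{q}$), changes the discount rate from $q$ to $p$ via Lemma 2.1 of \cite{Loeffen14}, and lets $c\uparrow\infty$ to get \eqref{2.10.0}. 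It then applies the compensation formula once, at the level of the Poisson point process of excursions indexed by local time, to write the same quantity as $\int_{x}^{\infty}\frac{W_{p}(x)}{W_{p}(w)}\,n(\cdots\text{at level }w)\,\mathrm{d}w$; differentiating in $x$ extracts the per-level excursion functional and gives the stated formula. This route never needs the excursion occupation measure, the L\'evy system of the excursion, the creeping atom, or the normalization of $n$ (the latter is fixed automatically by matching against $\mathrm{E}_{-x}[e^{-p\tau_{-w}^{-}}\mathbf{1}_{\{\tau_{-w}^{-}<\tau_{0}^{+}\}}]=W_{p}(x)/W_{p}(w)$), and it handles bounded and unbounded variation uniformly, whereas you anticipate a case split.

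The honest caveat is that your step (i) --- the exact scale-function form of $\nu_{p}(x,\mathrm{d}u)$ with the correct normalization of $n$ and the creeping contribution --- is precisely where all the difficulty sits, and you leave it as a plan rather than a computation. Your reduction, the observation that $W_{q}(y-O)$ kills overshoots $O\geq y$, the convolution identity $W_{q}(a)=W_{p}(a)+(q-p)\int_{0}^{a}W_{p}(a-z)W_{q}(z)\,\mathrm{d}z$, and the $y=0$ sanity check (which works because a spectrally positive process creeps upward only when $\sigma>0$, in which case $W_{q}(0+)=0$) are all sound. But as written the proof is not complete at its crux; if you want to finish along your lines you must actually establish the occupation/overshoot identities under $n$, whereas the paper's detour through $\mathrm{E}_{-x}[e^{-p\tau_{0}^{+}}W_{q}(-X_{\tau_{0}^{+}}+y)]$ lets you read the answer off known exit identities and a single differentiation.
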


\begin{proof}
For $0\leq a\leq x\leq c$ and $y\geq0$, by \eqref{fluc.1} it can be checked that
\begin{eqnarray}
\frac{{W}_{q}(x+y)}{{W}_{q}(c+y)}
\hspace{-0.3cm}&=&\hspace{-0.3cm}
\mathrm{E}_{-x-y}\left[e^{-q\tau_{-c-y}^{-}}\mathbf{1}_{\{\tau_{-c-y}^{-}<\tau_{0}^{+}\}}\right]
\nonumber\\
\hspace{-0.3cm}&=&\hspace{-0.3cm}
\mathrm{E}_{-x-y}\left[e^{-q\tau_{-c-y}^{-}}\mathbf{1}_{\{\tau_{-c-y}^{-}<\tau_{-a-y}^{+}\}}\right]
+\mathrm{E}_{-x-y}\left[\mathrm{E}_{-x-y}\left[\left.e^{-q\tau_{-c-y}^{-}}\mathbf{1}_{\{\tau_{-a-y}^{+}<\tau_{-c-y}^{-}<\tau_{0}^{+}\}}\right|\mathcal{F}_{\tau_{-a-y}^{+}}\right]\right]
\nonumber\\
\hspace{-0.3cm}&=&\hspace{-0.3cm}
\frac{{W}_{q}(x-a)}{{W}_{q}(c-a)}
+\mathrm{E}_{-x-y}\left[e^{-q\tau_{-a-y}^{+}}\mathbf{1}_{\{\tau_{-a-y}^{+}<\tau_{-c-y}^{-}\}}
\frac{{W}_{q}(-X_{\tau_{-a-y}^{+}})}{{W}_{q}(c+y)}\right],\nonumber
\end{eqnarray}
which is equivalent to
\begin{eqnarray}
\mathrm{E}_{-x}\left[e^{-q\tau_{-a}^{+}}
{W}_{q}(-X_{\tau_{-a}^{+}}+y)\mathbf{1}_{\{\tau_{-a}^{+}<\tau_{-c}^{-}\}}\right]
\hspace{-0.3cm}&=&\hspace{-0.3cm}{W}_{q}(x+y)-\frac{{W}_{q}(x-a)}{{W}_{q}(c-a)}{W}_{q}(c+y).\nonumber
\end{eqnarray}
Then, by Lemma 2.1 of \cite{Loeffen14}, we have that
\begin{eqnarray}
\mathrm{E}_{-x}\left[e^{-p\tau_{-a}^{+}}
{W}_{q}(-X_{\tau_{-a}^{+}}+y)\mathbf{1}_{\{\tau_{-a}^{+}<\tau_{-c}^{-}\}}\right]
\hspace{-0.3cm}&=&\hspace{-0.3cm}
{W}_{q}(x+y)-(q-p)\int_{a}^{x}{W}_{p}(x-z){W}_{q}(z+y)\mathrm{d}z
\nonumber\\
\hspace{-0.3cm}&&\hspace{-3cm}
-\frac{{W}_{p}(x-a)}{{W}_{p}(c-a)}\left({W}_{q}(c+y)-(q-p)\int_{a}^{c}{W}_{p}(c-z)W_{q}(z+y)\mathrm{d}z\right).\nonumber
\end{eqnarray}
In particular, we have
\begin{eqnarray}
\label{2.10.0}
\mathrm{E}_{-x}\left[e^{-p\tau_{0}^{+}}
{W}_{q}(-X_{\tau_{0}^{+}}+y)\mathbf{1}_{\{\tau_{0}^{+}<\infty\}}\right]
\hspace{-0.3cm}&=&\hspace{-0.3cm}
{W}_{q}(x+y)-(q-p)\int_{0}^{x}{W}_{p}(x-z){W}_{q}(z+y)\mathrm{d}z
\nonumber\\
\hspace{-0.3cm}&&\hspace{-3cm}
-\lim\limits_{c\uparrow\infty}\frac{{W}_{p}(x)}{{W}_{p}(c)}
\left({W}_{p}(c+y)+(q-p)\int_{0}^{y}{W}_{p}(c+y-z){W}_{q}(z)\mathrm{d}z\right)
\nonumber
\\
\hspace{-0.3cm}&=&\hspace{-0.3cm}
{W}_{p}(x+y)+(q-p)\int_{0}^{y}{W}_{p}(x+y-z){W}_{q}(z)\mathrm{d}z
\nonumber\\
\hspace{-0.3cm}&&\hspace{-3cm}
-{W}_{p}(x)
\left(e^{\Phi_{p}y}+(q-p)\int_{0}^{y}e^{\Phi_{p}(y-z)}{W}_{q}(z)\mathrm{d}z\right),
\end{eqnarray}
where we have used the fact that $\lim\limits_{z\rightarrow\infty}\frac{{W}_{q}(x+z)}{{W}_{q}(z)}=\mathrm{e}^{\Phi_{q}x}$, and that, for non-negative $p$ and $q$
\begin{eqnarray}
\label{3.17}
\hspace{-0.3cm} & & \hspace{-0.3cm}
{W}_{q}(x+y)-(q-p)\int_{0}^{x}{W}_{q}(z+y){W}_{p}(x-z)\mathrm{d}z\nonumber \\
\hspace{-0.3cm} & = & \hspace{-0.3cm}
{W}_{p}(x+y)+(q-p)\int_{0}^{y}{W}_{p}(x+y-z){W}_{q}(z)\mathrm{d}z,
\quad
x>0, \,\,x+y\geq0,
\end{eqnarray}
which can be verified by taking Laplace transform on both sides of the above equation.
By the compensation formula, we have
\begin{eqnarray}
\hspace{-0.3cm}&&\hspace{-0.3cm}
\mathrm{E}_{-x}\left(e^{-p \tau_{0}^{+}}W_{q}(-X_{\tau_{0}^{+}}+y)\mathbf{1}_{\{\tau_{0}^{+}<\infty\}}\right)
\nonumber\\
\hspace{-0.3cm}&=&\hspace{-0.3cm}
\mathrm{E}_{-x}\left(\sum_{g}e^{-p g}\prod\limits_{h<g}\mathbf{1}_{\{\overline{\epsilon}_{h}
<x+L(h)\}}
e^{-p \rho_{x+L(g)}^{+}(\epsilon_{g})}\right.
\nonumber\\
\hspace{-0.3cm}&&\hspace{0.5cm}
\left.
\times
W_{q}(
x+L(g)-\epsilon_{g}(\rho_{x+L(g)}^{+})
+y)
\mathbf{1}_{\{\overline{\epsilon}_{g}\geq x+L(g)\}}\right)
\nonumber\\
\hspace{-0.3cm}&=&\hspace{-0.3cm}
\mathrm{E}_{-x}\left(\int_{0}^{\infty}e^{-p w}\prod\limits_{h<w}\mathbf{1}_{\{\overline{\epsilon}_{h}
<x+L(h)\}}
\int_{\mathcal{E}}e^{-p \rho_{x+L(w)}^{+}(\varepsilon)}\right.
\nonumber\\
\hspace{-0.3cm}&&\hspace{0.5cm}
\left.
\times
W_{q}(
x+L(w)-\epsilon(\rho_{x+L(w)}^{+})+y
)
\mathbf{1}_{\{\overline{\epsilon}
\geq x+L(w)\}}\,n(\,\mathrm{d}\varepsilon)\,\mathrm{d}L(w)\right)
\nonumber\\
\hspace{-0.3cm}&=&\hspace{-0.3cm}
\mathrm{E}_{-x}\left(\int_{0}^{\infty}e^{-p L^{-1}(w-)}
\prod\limits_{h<L^{-1}(w-)}\mathbf{1}_{\{\overline{\epsilon}_{h}
<x+L(h)\}}
\right.
\nonumber\\
\hspace{-0.3cm}&&\hspace{0.5cm}
\left.
\times\int_{\mathcal{E}}
e^{-p \rho_{x+w}^{+}(\varepsilon)}
W_{q}(
x+w-\epsilon(\rho_{x+w}^{+})+y
)
\mathbf{1}_{\{\overline{\epsilon}
\geq x+w\}}\,n(\,\mathrm{d}\varepsilon)\,\mathrm{d}w\right)
\nonumber\\
\hspace{-0.3cm}&=&\hspace{-0.3cm}
\int_{x}^{\infty}\mathrm{E}_{-x}\left(e^{-p \tau_{-w}^{-}}\mathbf{1}_{\{\tau_{-w}^{-}<\tau_{0}^{+}\}}\right)
\,n\left(e^{-p \rho_{w}^{+}(\varepsilon)}
W_{q}(
w-\epsilon(\rho_{w}^{+})+y
)
\mathbf{1}_{\{\overline{\epsilon}
\geq w\}}\right)\,\mathrm{d}w
\nonumber\\
\hspace{-0.3cm}&=&\hspace{-0.3cm}
\int_{x}^{\infty}
\frac{W_{p}(x)}{W_{p}(w)}
\,n\left(e^{-p \rho_{w}^{+}(\varepsilon)}
W_{q}(
w-\epsilon(\rho_{w}^{+})+y
)
\mathbf{1}_{\{\overline{\epsilon}
\geq w\}}\right)\,\mathrm{d}w
,\nonumber
\end{eqnarray}
which together with \eqref{2.10.0} yields the desired result.
\end{proof}

The following Lemma \ref{2.2} below computes an expectation with the integrand involving the scale function, the surplus process with capital injection $\widetilde{Y}$ and the first passage time of $\widetilde{Y}$.

\begin{lem}\label{2.2}
For $x\in(0,b)$ and $y\in(0,\infty)$, we have
\begin{eqnarray}
\hspace{-0.6cm}\hspace{-0.3cm}
\mathrm{E}_{x}\left[e^{-q\sigma_{b}^{+}}W_{q+\gamma}(b-{\widetilde{Y}}_{{\sigma}_{b}^{+}}+y)\mathbf{1}_{\{\sigma_{b}^{+}<\infty\}}\right]
\hspace{-0.3cm}&=&\hspace{-0.3cm}
W_q(b-x+y)+\gamma\int_0^yW_q(b-x+y-z)W_{q+\gamma}(z)\mathrm{d}z
\nonumber\\
\hspace{-0.3cm}&&\hspace{-0.3cm}\hspace{-0.3cm}\hspace{-0.3cm}\hspace{-1cm}
-\frac{W_q(b-x)}{W_q^{\prime+}(b)}\Big(W^{\prime+}_q(b+y)+\gamma\int_0^yW^{\prime+}_q(b+y-z)W_{q+\gamma}(z)\mathrm{d}z\Big).\nonumber
\end{eqnarray}
\end{lem}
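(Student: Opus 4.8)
The plan is to reduce the reflected–process quantity on the left to two first–passage computations by conditioning on whether $X$ crosses $b$ before or after it is first forced to reflect at $0$. Since $X$ is spectrally positive, on $[0,\sigma_b^+]$ the process $\widetilde{Y}$ coincides with $X$ until the first time $X$ drops below $0$, which happens continuously with $X_{\tau_0^-}=0$; thereafter the strong Markov property shows that $\widetilde{Y}$ restarts as a copy of the reflected process issued from $0$ (because $\underline{X}$ can only decrease after $\tau_0^-$, so $\widetilde{Y}_{\tau_0^-+s}$ is the infimum–reflection of the shifted process). With $\tau_0^-$ and $\tau_b^+$ as in \eqref{fluc.1}, this gives
\begin{eqnarray}
\mathrm{E}_{x}\Big[e^{-q\sigma_{b}^{+}}W_{q+\gamma}(b-\widetilde{Y}_{\sigma_b^+}+y)\mathbf{1}_{\{\sigma_b^+<\infty\}}\Big]=A+\frac{W_q(b-x)}{W_q(b)}\,C,\nonumber
\end{eqnarray}
where $A:=\mathrm{E}_x[e^{-q\tau_b^+}W_{q+\gamma}(b-X_{\tau_b^+}+y)\mathbf{1}_{\{\tau_b^+<\tau_0^-\}}]$ is the contribution of crossing $b$ before reflecting, $C$ is the same expectation as on the left but with $\widetilde{Y}$ issued from $0$, and the prefactor $W_q(b-x)/W_q(b)=\mathrm{E}_x[e^{-q\tau_0^-}\mathbf{1}_{\{\tau_0^-<\tau_b^+\}}]$ is \eqref{fluc.1}. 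Throughout I abbreviate $H(u):=W_q(u+y)+\gamma\int_0^y W_q(u+y-z)W_{q+\gamma}(z)\mathrm{d}z$, so that the claimed right-hand side is exactly $H(b-x)-\frac{W_q(b-x)}{W_q^{\prime+}(b)}H^{\prime}(b)$, with $H^{\prime}$ the right-derivative in $u$.

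For $A$ I would shift by $b$, i.e. apply the two–sided first–passage–with–overshoot identity established inside the proof of Lemma \ref{lem2.1} (the general $(a,c)$ formula obtained there from Lemma 2.1 of \cite{Loeffen14}) to the spectrally positive process $X-b$ started at $-(b-x)$, with exit levels $0$ and $-b$, discount $q$ and scale index $q+\gamma$. This yields $A$ as $W_{q+\gamma}(b-x+y)-\gamma\int_0^{b-x}W_q(b-x-z)W_{q+\gamma}(z+y)\mathrm{d}z$ minus $\frac{W_q(b-x)}{W_q(b)}$ times the same expression with $b-x$ replaced by $b$. Invoking the scale-function identity \eqref{3.17} with $(q,p)$ there replaced by $(q+\gamma,q)$ collapses each bracket into $H$, giving the compact form $A=H(b-x)-\frac{W_q(b-x)}{W_q(b)}H(b)$.

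The term $C$ is where excursion theory enters, and this is the main obstacle. Since $\widetilde{Y}$ is issued from $0$, its excursions away from $0$ form a Poisson point process with characteristic measure $n$, and $\sigma_b^+$ occurs during the first excursion of height $\ge b$. Applying the compensation formula exactly as in the proof of Lemma \ref{lem2.1}, but now with the fixed crossing level $b$ in place of the growing level $x+L(\cdot)$, factorizes $C$ as $\eta(b)^{-1}\,n\big(e^{-q\rho_b^+(\varepsilon)}W_{q+\gamma}(b-\varepsilon(\rho_b^+)+y)\mathbf{1}_{\{\overline{\varepsilon}\ge b\}}\big)$, where $\eta(b)$ is the local-time rate determined by $\mathrm{E}_0[e^{-q\tau_{-w}^-}\mathbf{1}_{\{\tau_{-w}^-<\sigma_b^+\}}]=e^{-w\eta(b)}$. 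The excursion integral is read off directly from Lemma \ref{lem2.1} with $x=b$, $p=q$ and scale index $q+\gamma$, giving $n(\cdots)=-W_q(b)\frac{\mathrm{d}}{\mathrm{d}b}[H(b)/W_q(b)]=\frac{H(b)W_q^{\prime+}(b)}{W_q(b)}-H^{\prime}(b)$; the delicate point is to identify $\eta(b)=W_q^{\prime+}(b)/W_q(b)$, which I would extract from the known two-sided exit identity for the infimum-reflected spectrally positive process, taking care of the normalization $L(t)=-\underline{X}(t)$ of local time and of right-derivatives in the bounded-variation case. Substituting gives $C=H(b)-\frac{W_q(b)}{W_q^{\prime+}(b)}H^{\prime}(b)$.

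Finally I would combine the pieces: adding $A=H(b-x)-\frac{W_q(b-x)}{W_q(b)}H(b)$ to $\frac{W_q(b-x)}{W_q(b)}C=\frac{W_q(b-x)}{W_q(b)}H(b)-\frac{W_q(b-x)}{W_q^{\prime+}(b)}H^{\prime}(b)$ cancels the $H(b)$ terms and leaves $H(b-x)-\frac{W_q(b-x)}{W_q^{\prime+}(b)}H^{\prime}(b)$, which upon writing out $H$ and $H^{\prime}(b)=W_q^{\prime+}(b+y)+\gamma\int_0^y W_q^{\prime+}(b+y-z)W_{q+\gamma}(z)\mathrm{d}z$ is precisely the asserted expression. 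As a consistency check, letting $x\to0$ one has $A\to0$ and $W_q(b-x)/W_q(b)\to1$, so the formula correctly degenerates to $C$, the value for the reflected process started at $0$.
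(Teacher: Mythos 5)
Your argument is correct, and it reorganizes the paper's computation rather than reproducing it. The paper proves the identity in a single stroke: it applies the compensation formula to the excursion process of $X-\underline{X}$ under $\mathrm{P}_x$ with the local-time-dependent crossing level $\overline{\xi_{b}}(x-L(\cdot))$ (equal to $b-x+w$ before the reflecting barrier at $0$ is engaged, and frozen at $b$ afterwards), feeds in the survival factor \eqref{tsei.} adapted from Lemma 3.2 of \cite{Wang18}, and then evaluates the resulting local-time integral by splitting it at $w=0$; the two pieces of that integral are precisely your $A$ and $\frac{W_q(b-x)}{W_q(b)}C$, so the intermediate expressions coincide exactly. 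What you do differently is to split probabilistically up front, at the first exit of $X$ from $(0,b)$ (using that spectral positivity forces $X_{\tau_0^-}=0$ and hence a clean restart of the reflected process from $0$): this lets you compute the pre-reflection contribution $A$ by pure fluctuation theory (the two-sided exit identity with overshoot established inside the proof of Lemma \ref{lem2.1}, collapsed via \eqref{3.17}) and confines excursion theory to the constant-level problem started from $0$, where the survival factor is the elementary exponential $e^{-wW_q^{\prime+}(b)/W_q(b)}$. The one step you flag as delicate, $\eta(b)=W_q^{\prime+}(b)/W_q(b)$, is exactly \eqref{tsei.} evaluated at $x=0$ (where $\overline{\xi_{b}}(z)\equiv b$ for $z\le 0$), so it is available from the same source the paper already invokes and costs nothing extra. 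Your route buys a cleaner separation of the fluctuation-theoretic and excursion-theoretic ingredients and makes the cancellation of the $H(b)$ terms transparent; the paper's buys a single uniform formula that treats both phases at once via the level function $\overline{\xi_{b}}$.
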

\begin{proof}
Put $\xi_{b}(z)=z\wedge 0+b$ and $\overline{\xi_{b}}(z)=\xi_{b}(z)-z$. Recall that
$\widetilde{Y}_t=X_{t}-\underline{X}_{t}\wedge0$ and
${\sigma}_{b}^{+}=\inf\{t\geq0; \widetilde{Y}_t>b\}=\inf\{t\geq0; X_{t}>\xi_{b}(\underline{X}_{t})\}$.
Adapting Lemma 3.2 of \cite{Wang18}, one can check that
\begin{eqnarray}\label{tsei.}
\mathbb{E}_{x}\left(e^{-q\tau_{x-w}^{-}}
\mathbf{1}_{\{\tau_{x-w}^{-}<\sigma_{b}^{+}\}}\right)
=\exp\left(-\int_{x-w}^{x}\frac{W_{q}^{\prime}(\overline{\xi_{b}}\left(z\right))}
{W_{q}(\overline{\xi_{b}}\left(z\right))}\mathrm{d}z\right), \quad x\in(0,b),\, w\in(0,\infty).
\end{eqnarray}
By \eqref{tsei.}, Lemma \ref{lem2.1} as well as the compensation formula, we have
\begin{eqnarray}
\hspace{-0.3cm}&&\hspace{-0.3cm}
\mathrm{E}_{x}\left[e^{-q\sigma_{b}^{+}}W_{q+\gamma}(b-{\widetilde{Y}}_{{\sigma}_{b}^{+}}+y)\mathbf{1}_{\{\sigma_{b}^{+}<\infty\}}\right]
\nonumber\\
\hspace{-0.3cm}&=&\hspace{-0.3cm}
\mathbb{E}_{x}\left(\sum_{g}e^{-q g}\prod\limits_{h<g}\mathbf{1}_{\{\overline{\epsilon}_{h}
<\overline{\xi_{b}}(x-L(h))\}}
e^{-q \rho_{\overline{\xi_{b}}(x-L(g))}^{+}(\epsilon_{g})}\right.
\nonumber\\
\hspace{-0.3cm}&&\hspace{0.5cm}
\left.
\times
W_{q+\gamma}(b+((x-L(g))\wedge 0)-
(x-L(g))-\epsilon_{g}(\rho_{\overline{\xi_{b}}(x-L(g))}^{+})
+y)
\mathbf{1}_{\{\overline{\epsilon}_{g}\geq \overline{\xi_{b}}(x-L(g))\}}\right)
\nonumber\\
\hspace{-0.3cm}&=&\hspace{-0.3cm}
\mathbb{E}_{x}\left(\int_{0}^{\infty}e^{-q w}\prod\limits_{h<w}\mathbf{1}_{\{\overline{\epsilon}_{h}
<\overline{\xi_{b}}(x-L(h))\}}
\int_{\mathcal{E}}e^{-q \rho_{\overline{\xi_{b}}(x-L(w))}^{+}(\varepsilon)}\right.
\nonumber\\
\hspace{-0.3cm}&&\hspace{0.5cm}
\left.
\times
W_{q+\gamma}(
\overline{\xi_{b}}(x-L(w))-\epsilon_{g}(\rho_{\overline{\xi_{b}}(x-L(w))}^{+})
+y
)
\mathbf{1}_{\{\overline{\epsilon}
\geq \overline{\xi_{b}}(x-L(w))\}}\,n(\,\mathrm{d}\varepsilon)\,\mathrm{d}L(w)\right)
\nonumber\\
\hspace{-0.3cm}&=&\hspace{-0.3cm}
\mathbb{E}_{x}\left(\int_{0}^{\infty}e^{-q L^{-1}(w-)}
\prod\limits_{h<L^{-1}(w-)}\mathbf{1}_{\{\overline{\epsilon}_{h}
<\overline{\xi_{b}}(x-L(h))\}}
\right.
\nonumber\\
\hspace{-0.3cm}&&\hspace{0.5cm}
\left.
\times\int_{\mathcal{E}}
e^{-q \rho_{\overline{\xi_{b}}(x-w)}^{+}(\varepsilon)}
W_{q+\gamma}(
\overline{\xi_{b}}(x-w)-\epsilon(\rho_{\overline{\xi_{b}}(x-w)}^{+})+y
)
\mathbf{1}_{\{\overline{\epsilon}
\geq \overline{\xi_{b}}(x-w)\}}\,n(\,\mathrm{d}\varepsilon)\,\mathrm{d}w\right)
\nonumber\\
\hspace{-0.3cm}&=&\hspace{-0.3cm}
\int_{0}^{\infty}\mathbb{E}_{x}\left(e^{-q\tau_{x-w}^{-}}
\mathbf{1}_{\{\tau_{x-w}^{-}<\sigma_{b}^{+}\}}\right)
\,n\left(e^{-q \rho_{\overline{\xi_{b}}(x-w)}^{+}(\varepsilon)}
W_{q+\gamma}(
\overline{\xi_{b}}(x-w)-\epsilon(\rho_{\overline{\xi_{b}}(x-w)}^{+})+y
)
\mathbf{1}_{\{\overline{\epsilon}
\geq \overline{\xi_{b}}(x-w)\}}\right)\,\mathrm{d}w
\nonumber\\
\hspace{-0.3cm}&=&\hspace{-0.3cm}
\int_{-\infty}^{x}
\exp\left(-\int_{w}^{x}\frac{W_{q}^{\prime}(\overline{\xi_{b}}\left(z\right))}
{W_{q}(\overline{\xi_{b}}\left(z\right))}\mathrm{d}z\right)
\,n\left(e^{-q\rho_{\overline{\xi_{b}}(w)}^{+}(\varepsilon)}
W_{q+\gamma}(
\overline{\xi_{b}}(w)-\epsilon(\rho_{\overline{\xi_{b}}(w)}^{+})+y
)
\mathbf{1}_{\{\overline{\epsilon}
\geq \overline{\xi_{b}}(w)\}}\right)\,\mathrm{d}w
\nonumber\\
\hspace{-0.3cm}&=&\hspace{-0.3cm}
-\int_{-\infty}^{x}
\exp\left(-\int_{w}^{x}\frac{W_{q}^{\prime+}(\overline{\xi_{b}}\left(z\right))}
{W_{q}(\overline{\xi_{b}}\left(z\right))}\mathrm{d}z\right)
\nonumber\\
\hspace{-0.3cm}&&\hspace{-0.3cm}
\times
W_{q}(\overline{\xi_{b}}(w))\left.\frac{\mathrm{d}}{\mathrm{d}v}\left[\frac{W_{q}(v+y)+\gamma\int_{0}^{y}W_{q}(v+y-z)W_{q+\gamma}(z)\mathrm{d}z}
{W_{q}(v)}\right]\right|_{v=\overline{\xi_{b}}(w)}
\mathrm{d}w
\nonumber\\
\hspace{-0.3cm}&=&\hspace{-0.3cm}
-\int_{0}^{x}
\exp\left(-\int_{w}^{x}\frac{W_{q}^{\prime+}(b-z)}
{W_{q}(b-z)}\mathrm{d}z\right)
\nonumber\\
\hspace{-0.3cm}&&\hspace{-0.3cm}
\times
W_{q}(b-w)\left.\frac{\mathrm{d}}{\mathrm{d}v}\left[\frac{W_{q}(v+y)+\gamma\int_{0}^{y}W_{q}(v+y-z)W_{q+\gamma}(z)\mathrm{d}z}
{W_{q}(v)}\right]\right|_{v=b-w}
\mathrm{d}w
\nonumber\\
\hspace{-0.3cm}&&\hspace{-0.3cm}
-\int_{-\infty}^{0}
\exp\left(-\int_{0}^{x}\frac{W_{q}^{\prime+}(b-z)}
{W_{q}(b-z)}\mathrm{d}z\right)
\exp\left(-\int_{w}^{0}\frac{W_{q}^{\prime+}(b)}
{W_{q}(b)}\mathrm{d}z\right)
\nonumber\\
\hspace{-0.3cm}&&\hspace{-0.3cm}
\times
W_{q}(b)\left.\frac{\mathrm{d}}{\mathrm{d}v}\left[\frac{W_{q}(v+y)+\gamma\int_{0}^{y}W_{q}(v+y-z)W_{q+\gamma}(z)\mathrm{d}z}
{W_{q}(v)}\right]\right|_{v=b}
\mathrm{d}w
\nonumber\\
\hspace{-0.3cm}&=&\hspace{-0.3cm}
-W_q(b-x)\int_{-x}^{0}
\left.\frac{\mathrm{d}}{\mathrm{d}v}\left[\frac{W_{q}(v+y)+\gamma\int_{0}^{y}W_{q}(v+y-z)W_{q+\gamma}(z)\mathrm{d}z}
{W_{q}(v)}\right]\right|_{v=w+b}
\mathrm{d}w
\nonumber\\
\hspace{-0.3cm}&&\hspace{-0.3cm}
-W_q(b-x)\int_{0}^{\infty}e^{-\frac{W_q^{\prime}(b)}{W_q(b)}w}\mathrm{d}w
\left.\frac{\mathrm{d}}{\mathrm{d}v}\left[\frac{W_{q}(v+y)+\gamma\int_{0}^{y}W_{q}(v+y-z)W_{q+\gamma}(z)\mathrm{d}z}
{W_{q}(v)}\right]\right|_{v=b}
\nonumber\\
\hspace{-0.3cm}&=&\hspace{-0.3cm}
W_q(b-x+y)+\gamma\int_0^yW_q(b-x+y-z)W_{q+\gamma}(z)\mathrm{d}z
\nonumber\\
\hspace{-0.3cm}&&\hspace{-0.3cm}
-\frac{W_q(b-x)}{W_q^{\prime+}(b)}\Big(W^{\prime+}_q(b+y)+\gamma\int_0^yW^{\prime+}_q(b+y-z)W_{q+\gamma}(z)\mathrm{d}z\Big)
,\nonumber
\end{eqnarray}
which is the desired result.
\end{proof}

Thanks to the above Lemma \ref{2.2}, we are now able to give the expression for the potential measure of the spectrally positive L\'evy process controlled by a double barrier periodic dividend and capital injection strategy in the following Lemma \ref{lem.w}.

\begin{lem}\label{lem.w}
For $q>0$, $x>0$, and $\lambda>0$, we have
\begin{eqnarray}\label{pot.mea}
\mathrm{P}_x\Big(U_{e_q}^{0,b}\in\mathrm{d}y\Big)\hspace{-0.3cm}&=&\hspace{-0.3cm}
\frac{qZ_q(b-x,\Phi_{q+\gamma})+\gamma Z_q(b-x)}{\Phi_{q+\gamma}Z_q(b,\Phi_{q+\gamma})}
\bigg[W_q(0+)\delta_0(\mathrm{d}y)+W_q^{\prime+}(y)\mathbf{1}_{(0,b)}(y)\mathrm{d}y
\nonumber\\
\hspace{-0.3cm}&&\hspace{-0.3cm}
+\Big(\gamma W_q(b)W_{q+\gamma}(y-b)+W_q^{\prime+}(y)+\gamma\int_0^{y-b}W_q^{\prime+}(y-z)W_{q+\gamma}(z)\mathrm{d}z\Big)\mathbf{1}_{(b,\infty)}(y)\mathrm{d}y
\nonumber\\
\hspace{-0.3cm}&&\hspace{-0.3cm}
-\frac{q\Phi_{q+\gamma}Z_q(b,\Phi_{q+\gamma})}{qZ_q(b-x,\Phi_{q+\gamma})+\gamma Z_q(b-x)}\Big(\frac{\gamma}{q}W_{q+\gamma}(y-b)Z_q(b-x)+W_q(y-x)
\nonumber\\
\hspace{-0.3cm}&&\hspace{-0.3cm}
+\gamma\int_0^yW_q(y-x-z)W_{q+\gamma}(z)\mathrm{d}z\Big)\mathbf{1}_{(b,\infty)}(y)\mathrm{d}y\bigg]-qW_q(y-x)\mathbf{1}_{(0,b)}(y)\mathrm{d}y.
\end{eqnarray}


\end{lem}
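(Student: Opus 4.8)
The plan is to reduce the computation to two ingredients: (i) the first dividend epoch $T_b^+$, whose Laplace transform is already recorded in \eqref{V.b.3}, and (ii) the occupation of the reflected process $\widetilde{Y}$ before $T_b^+$, which I will identify with a Feynman–Kac resolvent and evaluate through the excursion identities of Lemmas \ref{lem2.1} and \ref{2.2}.

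First I would set up a regenerative decomposition at $T_b^+$. Under the strategy $U^{0,b}$ coincides with $\widetilde Y$ on $[0,T_b^+)$, and at $T_b^+$ the lump dividend resets the surplus to $b$, after which the process starts afresh from $b$. Conditioning on $T_b^+$ and using the strong Markov property together with the memorylessness of the exponential clock $e_q$, I would obtain
$$\mathrm{P}_x\big(U^{0,b}_{e_q}\in \mathrm{d}y\big)=h_x(\mathrm{d}y)+\mathrm{E}_x\big[e^{-qT_b^+}\big]\,\mathrm{P}_b\big(U^{0,b}_{e_q}\in \mathrm{d}y\big),\qquad h_x(\mathrm{d}y):=\mathrm{P}_x\big(\widetilde Y_{e_q}\in \mathrm{d}y,\,e_q<T_b^+\big).$$
Setting $x=b$ solves this renewal equation, giving $\mathrm{P}_b(U^{0,b}_{e_q}\in \mathrm{d}y)=h_b(\mathrm{d}y)/(1-\mathrm{E}_b[e^{-qT_b^+}])$, and substituting back expresses the potential measure purely in terms of $h_x$, $h_b$ and $\mathrm{E}_\cdot[e^{-qT_b^+}]$.

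Next I would identify $h_x$ as a resolvent of $\widetilde Y$ with state-dependent killing. Since the Poisson observations at which $\widetilde Y>b$ form a thinned Poisson process of rate $\gamma\mathbf 1_{\{\widetilde Y_s>b\}}$, the conditional probability that $T_b^+>t$ given the path is $\exp(-\gamma\int_0^t\mathbf 1_{\{\widetilde Y_s>b\}}\mathrm{d}s)$, whence
$$h_x(\mathrm{d}y)=\mathrm{E}_x\Big[\int_0^\infty qe^{-qt}\exp\Big(-\gamma\!\int_0^t\mathbf 1_{\{\widetilde Y_s>b\}}\mathrm{d}s\Big)\mathbf 1_{\{\widetilde Y_t\in \mathrm{d}y\}}\,\mathrm{d}t\Big].$$
Thus $h_x$ is the $q$-potential density of $\widetilde Y$ carrying killing rate $q$ on $(0,b)$ and $q+\gamma$ on $(b,\infty)$, with an atom at $0$ coming from the local time at the reflection level, present only in the bounded variation case and producing the $W_q(0+)\delta_0$ term.

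The substantive step is then to evaluate $h_x$ by the excursion theory of $\widetilde Y$ away from $0$, separating $y\in\{0\}$, $y\in(0,b)$ and $y\in(b,\infty)$. For $y\in(0,b)$ the enhanced killing above $b$ is felt only through excursions that overshoot $b$, and the baseline reflected resolvent supplies the local term $-qW_q(y-x)$ together with the homogeneous piece proportional to $W_q^{\prime+}(y)$. For $y\in(b,\infty)$ I would pass through $\sigma_b^+$ and invoke Lemma \ref{2.2} to package the entrance quantity $\mathrm{E}_x[e^{-q\sigma_b^+}W_{q+\gamma}(b-\widetilde Y_{\sigma_b^+}+y)]$, combined with Lemma \ref{lem2.1} for the excursion-measure integral, to generate the convolution terms $\gamma W_q(b)W_{q+\gamma}(y-b)$ and $\gamma\int_0^{y-b}W_q^{\prime+}(y-z)W_{q+\gamma}(z)\mathrm{d}z$; the enhanced rate $q+\gamma$ above $b$ is precisely what turns $W_q$ into $W_{q+\gamma}$ there. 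Finally I would insert the closed form of $\mathrm{E}_x[e^{-qT_b^+}]$ from \eqref{V.b.3} and the second-scale-function identities and simplify, the normalizing factor $1-\mathrm{E}_b[e^{-qT_b^+}]$ combining with $\mathrm{E}_x[e^{-qT_b^+}]$ to yield the prefactor $\big(qZ_q(b-x,\Phi_{q+\gamma})+\gamma Z_q(b-x)\big)/\big(\Phi_{q+\gamma}Z_q(b,\Phi_{q+\gamma})\big)$ of \eqref{pot.mea}. The main obstacle I anticipate is the $(b,\infty)$ computation: correctly accounting for the repeated, unobserved excursions above $b$ under the enhanced killing, and bookkeeping the resulting cascade of scale-function convolutions so that they collapse to the compact stated form, all while carrying the bounded- versus unbounded-variation distinction through $W_q(0+)$ and the one-sided derivatives $W_q^{\prime+}$.
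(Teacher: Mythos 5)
Your top-level regenerative decomposition at $T_b^+$ is sound: on $[0,T_b^+)$ the controlled process does coincide with $\widetilde Y$, the lump dividend at $T_b^+$ does reset the surplus to exactly $b$, and the resulting renewal equation together with \eqref{V.b.3} correctly reduces everything to $h_x(\mathrm{d}y)=\mathrm{P}_x(\widetilde Y_{e_q}\in\mathrm{d}y,\,e_q<T_b^+)$ and $h_b$. Your Feynman--Kac identification of $h_x$ as the resolvent of $\widetilde Y$ with killing rate $q$ on $(0,b)$ and $q+\gamma$ on $(b,\infty)$ is also correct. This is a coarser regeneration than the paper's: the paper regenerates at \emph{every} return of the surplus to $b$ (whether by a dividend payment or by creeping down through $b$ without being observed), which is why its functional equations \eqref{2.11} and \eqref{g.1} reduce directly to the full potential measure $g(b)$ rather than to a killed quantity.

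The genuine gap is the evaluation of $h_x$ on $(b,\infty)$, which is where all the content of \eqref{pot.mea} lives and which your sketch does not actually carry out. The phrase ``the enhanced rate $q+\gamma$ above $b$ is precisely what turns $W_q$ into $W_{q+\gamma}$'' is a heuristic, not an argument: before $T_b^+$ the process can exceed $b$, escape observation, creep back down to $b$, and repeat arbitrarily many times, so $h_x$ is \emph{not} obtained from one application of Lemmas \ref{lem2.1}--\ref{2.2} plus the Pistorius resolvent; it satisfies its own renewal equation at the successive unobserved returns to $b$, with unknown $h_b$ appearing on both sides. Solving that inner equation is exactly what the paper's two-regime analysis (\eqref{2.11} for starting points above $b$, \eqref{g.1}--\eqref{3.23} below, using \cite{Pistorius04}, Lemma 3.2 of \cite{Wang21} and Lemma \ref{2.2}) accomplishes, so filling your gap collapses your route into the paper's. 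Relatedly, your outer renewal does not dispose of the bounded/unbounded variation dichotomy: when $X$ has unbounded variation, $b$ is regular for $(b,\infty)$ for $\widetilde Y$ started at $b$, the inner renewal coefficient degenerates to $1$ at $x=b$ (since $W_q(0+)=0$), and extracting $h_b$ requires the separate limiting argument of \eqref{g.b.unb}--\eqref{lim.1}; this must be supplied before the division by $1-\mathrm{E}_b[e^{-qT_b^+}]$ yields anything explicit. Finally, your closing claim that the prefactor $\big(qZ_q(b-x,\Phi_{q+\gamma})+\gamma Z_q(b-x)\big)/\big(\Phi_{q+\gamma}Z_q(b,\Phi_{q+\gamma})\big)$ comes from $\mathrm{E}_x[e^{-qT_b^+}]/(1-\mathrm{E}_b[e^{-qT_b^+}])$ alone is not right as stated --- that ratio equals $\tfrac{\gamma}{q}\big(Z_q(b-x)Z_q^{\prime}(b,\Phi_{q+\gamma})-qW_q(b)Z_q(b-x,\Phi_{q+\gamma})\big)/\big(\Phi_{q+\gamma}Z_q(b,\Phi_{q+\gamma})\big)$, and the stated prefactor only emerges after the $x$-dependent part of $h_x$ is folded in, which again presupposes the missing computation.
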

\begin{proof}
Denote by $g(x)$ the left hand side of \eqref{pot.mea}.
One can verify that
\begin{eqnarray}\label{2.11}
\hspace{-0.3cm}g(x)\hspace{-0.3cm}&=&\hspace{-0.3cm}
\mathrm{P}_x\Big(U_{e_q}^{0,b}\in\mathrm{d}y;e_q<T_{1}\wedge \kappa_{b}^{-}\Big)+\mathrm{P}_x\Big(U_{e_q}^{0,b}\in\mathrm{d}y;e_q>T_{1}\wedge \kappa_{b}^{-}\Big)
\nonumber\\
\hspace{-0.3cm}&=&\hspace{-0.3cm}
\mathrm{P}_x\Big(U_{e_q}^{0,b}\in\mathrm{d}y;e_q>T_{1},\kappa_{b}^{-}>T_1\Big)+\mathrm{P}_x\Big(U_{e_q}^{0,b}\in\mathrm{d}y;e_q>\kappa_{b}^{-},T_1>\kappa_{b}^{-}\Big)
\nonumber\\
\hspace{-0.3cm}&&\hspace{-0.3cm}
+\frac{q}{q+\gamma}\mathrm{P}_x\Big(X_{e_{q+\gamma}}\in\mathrm{d}y;e_{q+\gamma}<\tau_b^-\Big)
\nonumber\\
\hspace{-0.3cm}&=&\hspace{-0.3cm}
\frac{q}{q+\gamma}\mathrm{P}_x\Big(X_{e_{q+\gamma}}\in\mathrm{d}y;e_{q+\gamma}<\tau_b^-\Big)+\bigg[\mathrm{E}_{x}\left[e^{-(q+\gamma) \tau_{b}^{-}}\right]
+\frac{\gamma}{q+\gamma}\mathrm{E}_{x}\left[1-e^{-(q+\gamma) \tau_{b}^{-}}\right]\bigg]g(b)
\nonumber\\
\hspace{-0.3cm}&=&\hspace{-0.3cm}
\frac{q}{q+\gamma}\mathrm{P}_x\Big(X_{e_{q+\gamma}}\in\mathrm{d}y;e_{q+\gamma}<\tau_b^-\Big)+\bigg[\frac{\gamma}{q+\gamma}+\frac{q}{q+\gamma}\mathrm{E}_{x}\left[e^{-(q+\gamma) \tau_{b}^{-}}\right]
\bigg]g(b)
\nonumber\\
\hspace{-0.3cm}&=&
\hspace{-0.3cm}
q\left(e^{\Phi_{q+\gamma}(b-x)}W_{q+\gamma}(y-b)-W_{q+\gamma}(y-x)\right)\mathbf{1}_{(b,\infty)}(y)\mathrm{d}y+\frac{\gamma}{q+\gamma}g(b)
\nonumber\\
\hspace{-0.3cm}&&
\hspace{-0.3cm}
+\frac{q}{q+\gamma}e^{\Phi_{q+\gamma}(b-x)}g(b),\quad x\in(b,\infty).
\end{eqnarray}
To obtain an expression of $g(x)$ for $x\in[0,b]$,
recall that $\widetilde{Y}_t=X_{t}-\underline{X}_{t}\wedge0$
and
${\sigma}_{b}^{+}=\inf\{t\geq0; \widetilde{Y}_t>b\}$. 
When $x\in(0,b)$, by Theorem 1 of \cite{Pistorius04} and \eqref{2.11}, it is verified that
\begin{eqnarray}\label{g.1}
g(x)\hspace{-0.3cm}&=&
\hspace{-0.3cm}\mathrm{P}_x\Big(U_{e_q}^{0,b}\in\mathrm{d}y;e_q<\kappa_b^+\Big)+\mathrm{P}_x\Big(U_{e_q}^{0,b}\in\mathrm{d}y;e_q>\kappa_b^+\Big)\nonumber\\
\hspace{-0.3cm}&=&\hspace{-0.3cm}
\mathrm{P}_x\Big(\widetilde{Y}_{e_q}\in\mathrm{d}y;e_q<\sigma_b^+\Big)+\mathrm{E}_x\big[e^{-q\sigma_b^+}g(\widetilde{Y}_{\sigma_b^+})\mathbf{1}_{\{\sigma_b^+<\infty\}}\big]
\nonumber\\
\hspace{-0.3cm}&=&\hspace{-0.3cm}
q\frac{W_{q}(b-x)W_{q}(0+)}{W_{q}^{\prime+}(b)}\delta_0(\mathrm{d}y)+q
\left(W_{q}(b-x)\frac{W_{q}^{\prime+}(y)}{W_{q}^{\prime+}(b)}-W_{q}(y-x)\right)\mathbf{1}_{(0,b)}(y)\mathrm{d}y
\nonumber\\
\hspace{-0.3cm}&&\hspace{-0.3cm}
+q\left(
\mathrm{E}_{x}\left[e^{-q{\sigma}_{b}^{+}}e^{\Phi_{q+\gamma}(b-{\widetilde{Y}}_{{\sigma}_{b}^{+}})}\right]W_{q+\gamma}(y-b)-\mathrm{E}_{x}\left[e^{-q{\sigma}_{b}^{+}}W_{q+\gamma}(y-{\widetilde{Y}}_{{\sigma}_{b}^{+}})\right]\right)\mathbf{1}_{(b,\infty)}(y)\mathrm{d}y
\nonumber\\
\hspace{-0.3cm}&&\hspace{-0.3cm}
+\frac{\gamma}{q+\gamma}\mathrm{E}_{x}\big[e^{-q{\sigma}_b^+}\big]g(b)
+\frac{q}{q+\gamma}\mathrm{E}_{x}\left[e^{-q{\sigma}_{b}^{+}}e^{\Phi_{q+\gamma}(b-{\widetilde{Y}}_{{\sigma}_{b}^{+}})}\right]g(b)
,\quad x\in[0,b].
\end{eqnarray}
By adapting Lemma 3.2 of \cite{Wang21}, one can derive
\begin{eqnarray}\label{g.2}
\hspace{-0.3cm}&&\hspace{-0.3cm}
\mathrm{E}_{x}\left[e^{-q{\sigma}_{b}^{+}}e^{\Phi_{q+\gamma}(b-{\widetilde{Y}}_{{\sigma}_{b}^{+}})}\right]
=
\int_{-x}^{0}\exp{\bigg(-\int_{-x}^s\frac{W_q^{\prime+}(z+b)}{W_q(z+b)}\mathrm{d}z\bigg)}
\nonumber\\
\hspace{-0.3cm}&&\hspace{-0.3cm}
\times\bigg(\frac{W_q^{\prime+}(s+b)}{W_q(s+b)}Z_q(s+b,\Phi_{q+\gamma})
-\Phi_{q+\gamma}Z_q(s+b,\Phi_{q+\gamma})+\gamma W_q(s+b)\bigg)\mathrm{d}s
\nonumber\\
\hspace{-0.3cm}&&\hspace{-0.3cm}
+\int_{0}^{\infty}\exp{\bigg(-\int_{-x}^0\frac{W_q^{\prime+}(z+b)}{W_q(z+b)}\mathrm{d}z-\int_{0}^s\frac{W_q^{\prime+}(b)}{W_q(b)}\mathrm{d}z\bigg)}
\nonumber\\
\hspace{-0.3cm}&&\hspace{-0.3cm}
\times\bigg(\frac{W_q^{\prime+}(b)}{W_q(b)}Z_q(b,\Phi_{q+\gamma})
-\Phi_{q+\gamma}Z_q(b,\Phi_{q+\gamma})+\gamma W_q(b)\bigg)\mathrm{d}s
\nonumber\\
\hspace{-0.3cm}&=&\hspace{-0.3cm}
Z_q(b-x,\Phi_{q+\gamma})-\frac{W_q(b-x)}{W_q^{\prime+}(b)}\Big(\Phi_{q+\gamma}Z_q(b,\Phi_{q+\gamma})-\gamma W_q(b)\Big),\quad x\in[0,b].
\end{eqnarray}
Combining (\ref{g.1}), (\ref{g.2}) and Lemma \ref{2.2}, it is verified that
\begin{eqnarray}\label{3.23}
g(x)\hspace{-0.3cm}&=&\hspace{-0.3cm}
q\frac{W_q(b-x)W_q(0+)}{W_q^{\prime+}(b)}\delta_0(\mathrm{d}y)
+q\Bigg[W_{q+\gamma}(y-b)\bigg(Z_q(b-x,\Phi_{q+\gamma})
-\frac{W_q(b-x)}{W_q^{\prime+}(b)}
\nonumber\\
\hspace{-0.3cm}&&\hspace{-0.3cm}
\times\Big(\Phi_{q+\gamma}Z_q(b,\Phi_{q+\gamma})-\gamma W_q(b)\Big)\bigg)
-W_q(y-x)-\gamma\int_0^{y-b}W_q(y-x-z)W_{q+\gamma}(z)\mathrm{d}z
\nonumber\\
\hspace{-0.3cm}&&\hspace{-0.3cm}
+\frac{W_q(b-x)}{W^{\prime+}_q(b)}\Big(W_q^{\prime+}(y)+\gamma\int_0^{y-b}W_q^{\prime+}(y-z)W_{q+\gamma}(z)\mathrm{d}z\Big)\Bigg]\mathbf{1}_{\{(b,\infty)\}}(y)\mathrm{d}y
\nonumber\\
\hspace{-0.3cm}&&\hspace{-0.3cm}
+\bigg[\frac{\gamma}{q+\gamma}Z_q(b-x)+\frac{q}{q+\gamma}\Big(Z_q(b-x,\Phi_{q+\gamma})-\Phi_{q+\gamma}Z_q(b,\Phi_{q+\gamma})\frac{W_q(b-x)}{W_q^{\prime+}(b)}\Big)\bigg]g(b)
\nonumber\\
\hspace{-0.3cm}&&\hspace{-0.3cm}
+q\Big(W_q(b-x)\frac{W_q^{\prime+}(y)}{W_q^{\prime+}(b)}-W_q(y-x)\Big)\mathbf{1}_{(0,b)}(y)\mathrm{d}y,\quad x\in[0,b].
\end{eqnarray}
When $X$ has paths of bounded variation, letting $x=b$ in (\ref{3.23}) and then using \eqref{3.17} with $x=0$, we get
\begin{eqnarray}\label{g.b.1}
g(b)
\hspace{-0.3cm}&=&\hspace{-0.3cm}
\Big(\frac{q}{q+\gamma}\Phi_{q+\gamma}Z_q(b,\Phi_{q+\gamma})\Big)^{-1}\Bigg[qW_q(0+)\delta_0(\mathrm{d}y)+qW_q^{\prime+}(y)\mathbf{1}_{(0,b)}(y)\mathrm{d}y+q\Big[W_{q+\gamma}(y-b)
\nonumber\\
\hspace{-0.3cm}&&\hspace{-1cm}
\times\Big(\gamma  W_q(b)-\Phi_{q+\gamma}Z_q(b,\Phi_{q+\gamma})\Big)
+W_q^{\prime}(y)+\gamma\int_0^{y-b}W_q^{\prime}(y-z)W_{q+\gamma}(z)\mathrm{d}z\Big]\mathbf{1}_{(b,\infty)}(y)\mathrm{d}y\Bigg].
\end{eqnarray}
We claim that \eqref{g.b.1} remains valid when $X$ has paths of unbounded variation. Actually, by (9) of \cite{Albrecher16} and the strong Markov property, we have
\begin{eqnarray}\label{g.b.unb}
g(b)\hspace{-0.3cm}&=&\hspace{-0.3cm}\mathrm{P}_b\big(U_{e_q}^{0,b}\in\mathrm{d}y;\kappa_x^-<e_q\wedge T_1\big)+\mathrm{P}_b\big(U_{e_q}^{0,b}\in\mathrm{d}y;e_q<\kappa_x^-\wedge T_1\big)+\mathrm{P}_b\big(U_{e_q}^{0,b}\in\mathrm{d}y;T_1<\kappa_x^-\wedge e_{q}\big)
\nonumber\\
\hspace{-0.3cm}&=&\hspace{-0.3cm}
\mathrm{E}_b\Big[e^{-(q+\gamma)\tau_x^-}\Big]g(x)+\frac{q}{q+\gamma}\mathrm{P}_b\Big[X_{e_{q+\gamma}}\in\mathrm{d}y;e_{q+\gamma}<\tau_x^-\Big]
\nonumber\\
\hspace{-0.3cm}&&\hspace{-0.3cm}
+\frac{\gamma}{q+\gamma}g(b)\Big[\mathrm{P}_b\big(e_{q+\gamma}<\tau_x^-\big)-\mathrm{P}_b\big(X_{e_{q+\gamma}}\in(x,b);\,e_{q+\gamma}<\tau_x^-\big)\Big]
\nonumber\\
\hspace{-0.3cm}&&\hspace{-0.3cm}
+\frac{\gamma}{q+\gamma}\int_x^bg(z)\mathrm{P}_b\left(X_{e_{q+\gamma}}\in\mathrm{d}z;\,e_{q+\gamma}<\tau_x^-\right)
\nonumber\\
\hspace{-0.3cm}&=&\hspace{-0.3cm}
e^{-\Phi_{q+\gamma}(b-x)}g(x)+q\left(e^{-\Phi_{q+\gamma}(b-x)}W_{q+\gamma}(y-x)-W_{q+\gamma}(y-b)\right)\mathbf{1}_{(x,\infty)}(y)\mathrm{d}y
\nonumber\\
\hspace{-0.3cm}&&\hspace{-0.3cm}
+\frac{\gamma}{q+\gamma}g(b)\left[1-e^{-\Phi_{q+\gamma}(b-x)}-(q+\gamma)\int_0^{b-x}e^{-\Phi_{q+\gamma}(b-x)}W_{q+\gamma}(b-x-z)\mathrm{d}z\right]
\nonumber\\
\hspace{-0.3cm}&&\hspace{-0.3cm}
+{\gamma}\int_0^{b-x}e^{-\Phi_{q+\gamma}(b-x)}W_{q+\gamma}(b-x-z)g(b-z)\mathrm{d}z,\quad x\in(0,b).
\end{eqnarray}
Plugging (\ref{g.b.unb}) into (\ref{3.23}), using \eqref{3.17}, and then rearranging the yielding equation gives
\begin{eqnarray}\label{g.x.unb}
\hspace{-0.3cm}&&\hspace{-0.3cm}
g(x)\left[1-\frac{\gamma Z_q(b-x)+qZ_q(b-x,\Phi_{q+\gamma})-q\Phi_{q+\gamma}Z_q(b,\Phi_{q+\gamma})\frac{W_q(b-x)}{W_q^{\prime+}(b)}}{qe^{\Phi_{q+\gamma}(b-x)}+\gamma Z_{q+\gamma}(b-x)}\right]
\nonumber\\
\hspace{-0.3cm}&=&\hspace{-0.3cm}
q\frac{W_q(b-x)W_q(0+)}{W_q^{\prime+}(b)}\delta_0(\mathrm{d}y)+q\Big(W_q(b-x)\frac{W_q^{\prime+}(y)}{W_q^{\prime+}(b)}-W_q(y-x)\Big)\mathbf{1}_{(0,b)}(y)\mathrm{d}y
\nonumber\\
\hspace{-0.3cm}&&\hspace{-0.3cm}
+q\frac{W_q(b-x)}{W_q^{\prime+}(b)}\Big[W_{q+\gamma}(y-b)\Big(\gamma W_q(b)-\Phi_{q+\gamma}Z_q(b,\Phi_{q+\gamma})\Big)+W_q^{\prime+}(y)
\nonumber\\
\hspace{-0.3cm}&&\hspace{-0.3cm}
+\gamma\int_0^{y-b}W_q^{\prime+}(y-z)W_{q+\gamma}(z)\mathrm{d}z\Big]\mathbf{1}_{(b,\infty)}(y)\mathrm{d}y
+\left(q+\gamma e^{-\Phi_{q+\gamma}(b-x)}Z_q(b-x)\right)^{-1}
\nonumber\\
\hspace{-0.3cm}&&\hspace{-0.3cm}
\times\bigg[q\Big(q\gamma\int_0^{b-x}W_{q+\gamma}(z+y-b)W_q(b-x-z)\mathrm{d}z
\nonumber\\
\hspace{-0.3cm}&&\hspace{-0.3cm}
+\gamma^2Z_q(b-x)e^{-\Phi_{q+\gamma}(b-x)}\int_0^{b-x}W_{q+\gamma}(z+y-b)W_q(b-x-z)\mathrm{d}z
\nonumber\\
\hspace{-0.3cm}&&\hspace{-0.3cm}
-\gamma^2Z_q(b-x)W_{q+\gamma}(y-b)\int_0^{b-x}e^{-\Phi_{q+\gamma}z}W_q(z)\mathrm{d}z
\nonumber\\
\hspace{-0.3cm}&&\hspace{-0.3cm}
-q\gamma W_{q+\gamma}(y-x)\int_0^{b-x}e^{-\Phi_{q+\gamma}z}W_q(z)\mathrm{d}z
\Big)\mathbf{1}_{(b,\infty)}(y)\mathrm{d}y\bigg]
\nonumber\\
\hspace{-0.3cm}&&\hspace{-0.3cm}
+\frac{\gamma Z_q(b-x)+qZ_q(b-x,\Phi_{q+\gamma})-q\Phi_{q+\gamma}Z_q(b,\Phi_{q+\gamma})\frac{W_q(b-x)}{W_q^{\prime+}(b)}}{q+\gamma e^{-\Phi_{q+\gamma}(b-x)}Z_q(b-x)}
\nonumber\\
\hspace{-0.3cm}&&\hspace{-0.3cm}
\times e^{-\Phi_{q+\gamma}(b-x)}\int_0^{b-x}\left(q\omega(b-z)+\gamma g(b-z)\right)W_{q+\gamma}(b-x-z)\mathrm{d}z.
\end{eqnarray}
Recall that $W_{q}(0+)=0$ in case $X$ has paths of unbounded variation. Hence, for any $\theta\geq0$ and any bounded function $h(y)$, one can verify that
\begin{eqnarray}\label{lim.1}
\lim\limits_{x\uparrow b}\frac{\int_0^{b-x}e^{-\theta z}W_q(z)\mathrm{d}z}{W_q(b-x)}
=\lim\limits_{x\uparrow b}\frac{\int_0^{b-x}W_{q+\gamma}(z)\mathrm{d}z}{W_q(b-x)}=\lim\limits_{x\uparrow b}\frac{\int_0^{b-x}h(z)W_{q+\gamma}(z)\mathrm{d}z}{W_q(b-x)}
=0,
\end{eqnarray}
where, in the last equality of \eqref{lim.1} we have used the fact that
\begin{eqnarray}
\label{lim.2}
\lim\limits_{x\uparrow \,b^{-}}\frac{W_{q+\gamma}(b-x)}{W_q(b-x)}=1,
\end{eqnarray}
which can be achieved by setting $y=0$ and $p=q+\gamma$ in \eqref{3.17}.
Dividing the both sides of (\ref{g.x.unb}) by $W_q(b-x)$, sending $x$ upward to $b$, and then using (\ref{lim.1}), we finally find that \eqref{g.b.1} also holds true for the case when $X$ has paths of unbounded variation.
Plugging (\ref{g.b.1}) into \eqref{2.11} and \eqref{3.23} yields \eqref{pot.mea}.
The proof is complete.
\end{proof}

With the preparations made in Lemma \ref{lem.D.R} and Lemma \ref{lem.w}, we are now ready to give an expression of the value function, denoted as $V_{0,b}^{\omega}$, of a double barrier periodic dividend and capital injection strategy in the following Lemma \ref{V.x}. Since Lemma \ref{V.x} is a direct consequence of Lemma \ref{lem.D.R}, Lemma \ref{lem.w} as well as the integrating by parts formula, we omit its proof.

\begin{lem}\label{V.x}
For $q>0$, and $\lambda>0$, we have
\begin{eqnarray}
V_{0,b}^{\omega}(x)
=
\begin{cases}
&\hspace{-0.3cm}-\frac{\gamma}{q+\gamma}\left[\overline{Z}_q(b-x)+\frac{\psi^{\prime}(0+)}{q}\right]
+
\frac{\left(\gamma Z_q(b)-\phi(q+\gamma)\right)\left[Z_q(b-x,\Phi_{q+\gamma})+\frac{\gamma}{q}Z_q(b-x)\right]}{(q+\gamma)\Phi_{q+\gamma}Z_q(b,\Phi_{q+\gamma})}
\\&\hspace{-0.3cm}
+\frac{\lambda\omega(b)}{q}Z_q(b-x)-\lambda\int_0^b\omega(y)W_q(y-x)\mathrm{d}y-\frac{qZ_q(b-x,\Phi_{q+\gamma})+\gamma Z_q(b-x)}{q\Phi_{q+\gamma}Z_q(b,\Phi_{q+\gamma})}
\\
&\hspace{-0.3cm}
\times\bigg[\lambda\int_0^b\omega^{\prime}_+(y)W_q(y)\mathrm{d}y+\lambda\int_0^{\infty}\omega^{\prime}_+(b+y)\Big[\Big(W_q(b+y)+\gamma\int_0^yW_q(b+y-z)W_{q+\gamma}(z)\mathrm{d}z\Big)
\\
&\hspace{-0.3cm}
-\frac{q\Phi_{q+\gamma}Z_q(b,\Phi_{q+\gamma})}{qZ_q(b-x,\Phi_{q+\gamma})+\gamma Z_q(b-x)}\Big(\frac{1}{q}Z_q(b-x+y)
\\
&\hspace{-0.3cm}
+\frac{\gamma}{q}\int_0^yZ_q(b-x+y-z)W_{q+\gamma}(z)\mathrm{d}z\Big)\Big]\mathrm{d}y\bigg],\quad x\in[0,\infty),
\\
&\hspace{-0.3cm}
\phi x+V_{0,b}^{\omega}(0),\quad x\in(-\infty,0).
\end{cases}\nonumber
\end{eqnarray}
\end{lem}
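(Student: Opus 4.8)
The plan is to start from the definition of the performance function and split it into its two structurally distinct parts,
\[
V_{0,b}^{\omega}(x)=\mathrm{E}_x\bigg[\int_0^{\infty}e^{-qt}\mathrm{d}D^{0,b}_t-\phi\int_0^{\infty}e^{-qt}\mathrm{d}R_t^{0,b}\bigg]+\lambda\,\mathrm{E}_x\bigg[\int_0^{\infty}e^{-qt}\omega(U^{0,b}_{t})\mathrm{d}t\bigg].
\]
The first expectation is exactly the left-hand side of \eqref{exp.dif.div.cap.}, so Lemma \ref{lem.D.R} supplies it verbatim and produces the first two lines of the claimed formula. For the second expectation I would introduce an independent exponential clock $e_q$ of rate $q$ and use Fubini together with the fact that $qe^{-qt}\mathrm{d}t$ is the law of $e_q$ to rewrite it as a potential integral,
\[
\lambda\,\mathrm{E}_x\bigg[\int_0^{\infty}e^{-qt}\omega(U^{0,b}_{t})\mathrm{d}t\bigg]=\frac{\lambda}{q}\,\mathrm{E}_x\big[\omega(U^{0,b}_{e_q})\big]=\frac{\lambda}{q}\int_{[0,\infty)}\omega(y)\,\mathrm{P}_x\big(U^{0,b}_{e_q}\in\mathrm{d}y\big),
\]
which is precisely where Lemma \ref{lem.w} enters. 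Since $\omega$ is concave with $\omega'_+(\infty)\in[0,1]$ it grows at most linearly, and $U^{0,b}_{e_q}$ has a finite first moment, so this master integral converges absolutely.

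The core of the argument is then to insert the explicit potential measure \eqref{pot.mea} and evaluate the integral term by term, following the natural decomposition of \eqref{pot.mea}: the atom $W_q(0+)\delta_0(\mathrm{d}y)$, the absolutely continuous piece on $(0,b)$ carrying $W_q^{\prime+}(y)$ and $-qW_q(y-x)$, and the piece on $(b,\infty)$ carrying $W_q^{\prime+}$, the convolution $\int_0^{y-b}W_q^{\prime+}(y-z)W_{q+\gamma}(z)\mathrm{d}z$, and the $W_{q+\gamma}(y-b)$ and $W_q(y-x)$ terms. After multiplication by $\lambda/q$, the prefactor $\frac{qZ_q(b-x,\Phi_{q+\gamma})+\gamma Z_q(b-x)}{\Phi_{q+\gamma}Z_q(b,\Phi_{q+\gamma})}$ matches the coefficient displayed in the statement. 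The summand $-qW_q(y-x)\mathbf{1}_{(0,b)}(y)\mathrm{d}y$, integrated against $\tfrac{\lambda}{q}\omega(y)$, immediately yields the clean term $-\lambda\int_0^b\omega(y)W_q(y-x)\mathrm{d}y$ with no further manipulation.

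The passage from integrands involving $W_q^{\prime+}$ to those involving $\omega'_+$ is carried out by integration by parts, which is the mechanism that manufactures the factors $\omega'_+(y)$ and $\omega'_+(b+y)$ in the final expression. On $(0,b)$ one has $\int_0^b\omega(y)W_q^{\prime+}(y)\mathrm{d}y=\omega(b)W_q(b)-\omega(0+)W_q(0+)-\int_0^b\omega'_+(y)W_q(y)\mathrm{d}y$, and the boundary contribution $\omega(0+)W_q(0+)$ cancels exactly against the atom term $\tfrac{\lambda}{q}\omega(0+)W_q(0+)$ coming from $W_q(0+)\delta_0$; this cancellation is what lets the bounded- and unbounded-variation cases, where $W_q(0+)=1/\overline{c}$ and $W_q(0+)=0$ respectively, be treated uniformly. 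The analogous step on $(b,\infty)$, in which the derivative must also be moved through the convolution $\int_0^{y-b}W_q^{\prime+}(y-z)W_{q+\gamma}(z)\mathrm{d}z$, produces the bracketed expression multiplying $\omega'_+(b+y)$, while the residual boundary value at $y=b$ together with the $Z_q(b-x)$-carrying pieces of the prefactor recombine, via the scale-function identity \eqref{3.17} relating $W_p$-- and $W_q$--convolutions, into the stand-alone term $\frac{\lambda\omega(b)}{q}Z_q(b-x)$.

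The step I expect to be the main obstacle is precisely this integration-by-parts bookkeeping on $(b,\infty)$ and the justification that the resulting pieces converge individually. Because $W_q$ and $W_{q+\gamma}$ grow like $e^{\Phi_q y}$ and $e^{\Phi_{q+\gamma}y}$ with $\Phi_{q+\gamma}>\Phi_q$, the separate summands in \eqref{pot.mea} are exponentially large at infinity, and only their specific linear combination, the same one that makes $\mathrm{P}_x(U^{0,b}_{e_q}\in\mathrm{d}y)$ a bona fide law, decays fast enough to pair with the at-most-linearly growing $\omega$; I would therefore keep the $(b,\infty)$ terms grouped as in \eqref{pot.mea} while integrating by parts, check that the boundary terms at infinity vanish for this grouping, and only afterwards redistribute into the displayed form. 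Finally, the branch $x\in(-\infty,0)$ is immediate from Lemma \ref{R.continu.}: the reflection representation \eqref{R.form} forces an instantaneous injection of $-x$ that lifts the surplus to $0$ at cost $\phi(-x)$, so $V_{0,b}^{\omega}(x)=\phi x+V_{0,b}^{\omega}(0)$, which is the second case of the statement.
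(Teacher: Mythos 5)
Your proposal is correct and follows exactly the route the paper intends: the paper omits the proof of this lemma, stating it is a direct consequence of Lemma \ref{lem.D.R}, Lemma \ref{lem.w} and integration by parts, which is precisely your decomposition into the dividend/injection part and the resolvent term $\frac{\lambda}{q}\mathrm{E}_x[\omega(U^{0,b}_{e_q})]$ followed by integration by parts against the potential measure. Your additional remarks on the cancellation of the atom at $0$ and on keeping the $(b,\infty)$ terms grouped to control growth at infinity are sensible refinements of the same argument.
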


The following Lemma \ref{lem.par.b} characterizes the smooth conditions of the value function $V_{0,b}^{\omega}(x)$.

\begin{lem}\label{lem.par.b}
For each $b\in(0,\infty)$, the function $V_{0,b}^{\omega}(x)$ is continuously differentiable over $(-\infty,\infty)$. Furthermore, if $X$ has paths of unbounded variation, $V_{0,b}^{\omega}(x)$ is twice continuously differentiable over $(0,\infty)$.
\end{lem}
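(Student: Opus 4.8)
The plan is to read the regularity of $V_{0,b}^{\omega}$ directly off the closed-form expression in Lemma \ref{V.x} and to reduce the whole statement to a check at the two junction points $x=0$ and $x=b$. On each open interval $(-\infty,0)$, $(0,b)$ and $(b,\infty)$ the smoothness is immediate. On $(-\infty,0)$ the function is the affine map $x\mapsto\phi x+V_{0,b}^{\omega}(0)$. On $(0,b)$ and $(b,\infty)$ the formula is a finite linear combination of $x\mapsto\overline{Z}_q(b-x)$, $x\mapsto Z_q(b-x)$, $x\mapsto Z_q(b-x,\Phi_{q+\gamma})$ and of potential-type integrals such as $x\mapsto\int_0^b\omega(y)W_q(y-x)\,\mathrm{d}y$ and the $y$-integrals of $\omega_+'$ against scale functions; since $\omega$ is continuous and $\omega_+'$ is bounded, these integrals may be differentiated under the integral sign. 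By the regularity recalled in Section \ref{sec:model}, namely $W_q\in C^1((0,\infty))$, $Z_q,\,Z_q(\cdot,\Phi_{q+\gamma})\in C^1((0,\infty))$ and $\overline{Z}_q\in C^1(\mathbb{R})\cap C^2((0,\infty))$ in the bounded-variation case, together with the derivative relations $Z_q'(u)=qW_q(u)$, $\overline{Z}_q'(u)=Z_q(u)$ and $\partial_uZ_q(u,\Phi_{q+\gamma})=\Phi_{q+\gamma}Z_q(u,\Phi_{q+\gamma})-\gamma W_q(u)$ (the last following from the definition of the second scale function), each term is $C^1$ on the interior of each interval. Continuity of $V_{0,b}^{\omega}$ everywhere is then clear, so the only content is the matching of one-sided derivatives at $x=0$ and $x=b$.

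The matching at $x=b$ is where I expect the main obstacle, because in the bounded-variation case the individual terms are genuinely non-smooth there. As the argument $b-x$ crosses $0$, the maps $Z_q(b-x)$ and $Z_q(b-x,\Phi_{q+\gamma})$ have a jump in their $x$-derivative proportional to $W_q(0+)=1/\overline{c}\neq0$, and the integral $\int_0^b\omega(y)W_q(y-x)\,\mathrm{d}y$ (which vanishes identically for $x>b$) contributes, via
\[
\frac{\mathrm{d}}{\mathrm{d}x}\int_0^b\omega(y)W_q(y-x)\,\mathrm{d}y=-\omega(x)W_q(0+)-\int_x^b\omega(y)W_q'(y-x)\,\mathrm{d}y,
\]
a left-hand limit $-\omega(b)W_q(0+)$. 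The crux is therefore to compute $V_{0,b}^{\omega\,\prime}(b-)$ and $V_{0,b}^{\omega\,\prime}(b+)$ separately, collect the three sources of $W_q(0+)$-terms weighted by their coefficients from Lemma \ref{V.x}, and show that the total jump is zero. I would carry this out using the derivative relations above, so that the $W_q(0+)$-contributions cancel after simplification.

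The matching at $x=0$ is the capital-injection smooth-fit condition: I would show $V_{0,b}^{\omega\,\prime}(0+)=\phi$, which is precisely the slope at which the affine branch on $(-\infty,0)$ is glued. Evaluating the derivative obtained above at $x=0$, all scale functions are taken at the argument $b>0$, hence in the smooth region, and after using $Z_q(b-0)=Z_q(b)$ and the coefficient $\gamma Z_q(b)-\phi(q+\gamma)$ multiplying the block $Z_q(b-x,\Phi_{q+\gamma})+\tfrac{\gamma}{q}Z_q(b-x)$, the value $\phi$ should drop out; this is a direct computation rather than a conceptual difficulty.

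Finally, for the unbounded-variation case the same scheme yields the stronger conclusion. Since $W_q(0+)=0$, every first-derivative jump identified at $x=b$ vanishes automatically, so the $C^1$ matching there needs no cancellation argument. For the $C^2$ claim on $(0,\infty)$ I would differentiate the formula a second time, using $W_q\in C^1((0,\infty))$, $Z_q\in C^1(\mathbb{R})$ and $\overline{Z}_q\in C^2(\mathbb{R})\cap C^3((0,\infty))$; on the interiors of $(0,b)$ and $(b,\infty)$ continuity of the second derivative is then clear. The remaining point, again the delicate one, is continuity of $V_{0,b}^{\omega\,\prime\prime}$ at $x=b$: several terms (those involving $Z_q(b-x)$, $Z_q(b-x,\Phi_{q+\gamma})$ and the potential integral) produce second-derivative contributions proportional to $W_q'(0+)$ as $x\uparrow b$, which may be infinite when $\sigma=0$. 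I expect these potentially singular $W_q'(0+)$-terms to cancel once assembled with their coefficients, leaving a finite limit that matches the right-hand side (where the integral term contributes $0$), thereby establishing $V_{0,b}^{\omega}\in C^2((0,\infty))$ and completing the proof.
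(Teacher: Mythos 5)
Your proposal follows essentially the same route as the paper: differentiate the closed-form expression of Lemma \ref{V.x} on each piece, verify $V_{0,b}^{\omega\prime}(0+)=\phi$ and the cancellation of the one-sided derivative jumps at $x=b$ (in the paper this cancellation also invokes the convolution identity \eqref{3.17}, since the derivative is written in different forms on the two sides of $b$), and in the unbounded-variation case use $W_q(0+)=0$ together with a second differentiation. The only small inaccuracy is your attribution of the potential second-derivative singularity at $x=b$ to $W_q'(0+)$: in the actual computation $W_q'$ only ever appears under integrals (hence is harmless even when $W_q'(0+)=\infty$), and the jump $V_{0,b}^{\omega\prime\prime}(b+)-V_{0,b}^{\omega\prime\prime}(b-)$ is proportional to $W_q(0+)$, which vanishes in the unbounded-variation case.
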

\begin{proof}
By Lemma \ref{V.x}, one can readily derive that
\begin{eqnarray}\label{Vx.p.1}
V_{0,b}^{\omega\prime}(x)\hspace{-0.3cm}&=&\hspace{-0.3cm}
\frac{\gamma}{q+\gamma}Z_q(b-x)-\frac{\gamma Z_q(b)-\phi(q+\gamma)}{(q+\gamma)Z_q(b,\Phi_{q+\gamma})}Z_q(b-x,\Phi_{q+\gamma})-\lambda\int_{0+}^bW_q(y-x)\omega^{\prime}_+(y)\mathrm{d}y
\nonumber\\
\hspace{-0.3cm}&&\hspace{-0.3cm}
+\frac{Z_q(b-x,\Phi_{q+\gamma})}{Z_q(b,\Phi_{q+\gamma})}\bigg[\lambda\int_{0+}^bW_q(y)\omega^{\prime}_+(y)\mathrm{d}y+\lambda\int_0^{\infty}\omega^{\prime}_+(b+y)\Big[\Big(W_{q}(b+y)
\nonumber\\
\hspace{-0.3cm}&&\hspace{-0.3cm}
+\gamma\int_0^yW_q(b+y-z)W_{q+\gamma}(z)\mathrm{d}z\Big)-\frac{Z_q(b,\Phi_{q+\gamma})}{Z_q(b-x,\Phi_{q+\gamma})}\Big(W_{q}(b-x+y)
\nonumber\\
\hspace{-0.3cm}&&\hspace{-0.3cm}
+\gamma\int_0^yW_q(b-x+y-z)W_{q+\gamma}(z)\mathrm{d}z\Big)\Big]\mathrm{d}y
\bigg],\quad x\in(0,b),
\end{eqnarray}
and
\begin{eqnarray}\label{Vx.p.2}
V_{0,b}^{\omega\prime}(x)
\hspace{-0.3cm}&=&\hspace{-0.3cm}
\frac{\gamma}{q+\gamma}-\frac{\gamma Z_q(b)-\phi(q+\gamma)}{(q+\gamma)Z_q(b,\Phi_{q+\gamma})}e^{-\Phi_{q+\gamma}(x-b)}
+\frac{e^{-\Phi_{q+\gamma}(x-b)}}{Z_q(b,\Phi_{q+\gamma})}\bigg[\lambda\int_{0+}^bW_q(y)\omega^{\prime}_+(y)\mathrm{d}y
\nonumber\\
\hspace{-0.3cm}&&\hspace{-0.3cm}
+\lambda\int_0^{\infty}\omega^{\prime}_+(b+y)\Big[W_q(b+y)
+\gamma\int_0^yW_q(b+y-z)W_{q+\gamma}(z)\mathrm{d}z-\frac{Z_q(b,\Phi_{q+\gamma})}{e^{\Phi_{q+\gamma}(b-x)}}
\nonumber\\
\hspace{-0.3cm}&&\hspace{-0.3cm}
\times\Big(W_{q+\gamma}(b-x+y)
-\gamma\int_0^{b-x}W_{q+\gamma}(y+z)W_q(b-x-z)\mathrm{d}z\Big)\Big]\mathrm{d}y\bigg]
,\quad x\in(b,\infty).
\end{eqnarray}
Combining (\ref{3.17}), (\ref{lim.2}), (\ref{Vx.p.1}) and (\ref{Vx.p.2}), we can find
\begin{eqnarray}
V_{0,b}^{\omega\prime}(b+)-V_{0,b}^{\omega\prime}(b-)\hspace{-0.3cm}&=&\hspace{-0.3cm}
{\lambda}\int_0^{\infty}\omega^{\prime}_+(b+y)\Big(W_{q}(y)+\gamma\int_0^yW_q(y-z)W_{q+\gamma}(z)-W_{q+\gamma}(y) \mathrm{d}z\Big)\mathrm{d}y
=0,\nonumber
\end{eqnarray}
where the fact that $\lim_{x\uparrow b}\int_{0+}^b\omega^{\prime}_+(y)W_q(y-x)\mathrm{d}y=\lim_{x\uparrow b}\int_{x}^b\omega^{\prime}_+(y)W_q(y-x)\mathrm{d}y=0$ was used.
In addition, it is easy to get that
\begin{eqnarray}
V^{\omega\prime}_{0,b}(0+)=\phi=V^{\omega\prime}_{0,b}(0-).\nonumber
\end{eqnarray}
The above arguments imply the continuous differentiability of $V_{0,b}^{\omega}(x)$ over $(-\infty,\infty)$.

Furthermore, when $X$ has paths of unbounded
variation, the scale function $W_q$ is continuously differentiable, and hence, we have
\begin{eqnarray}\label{Vx.pp.1}
V_{0,b}^{\omega\prime\prime}(x)
\hspace{-0.3cm}&=&\hspace{-0.3cm}
-\frac{q\gamma}{q+\gamma}W_q(b-x)+\frac{\gamma Z_q(b)-\phi(q+\gamma)}{(q+\gamma)Z_q(b,\Phi_{q+\gamma})}\left[\Phi_{q+\gamma}Z_q(b-x,\Phi_{q+\gamma})-\gamma W_q(b-x)\right]
\nonumber\\
\hspace{-0.3cm}&&\hspace{-0.3cm}
+\int_0^b\omega^{\prime}_+(y)W_q^{\prime+}(y-x)\mathrm{d}y-\frac{\Phi_{q+\gamma}Z_q(b-x,\Phi_{q+\gamma})-\gamma W_q(b-x)}{Z_q(b,\Phi_{q+\gamma})}
\bigg[\lambda\int_{0+}^bW_q(y)\omega^{\prime}_+(y)\mathrm{d}y
\nonumber\\
\hspace{-0.3cm}&&\hspace{-0.3cm}
+\lambda\int_0^{\infty}\omega^{\prime}_+(b+y)\Big[\Big(W_{q}(b+y)+\gamma\int_0^yW_q(b+y-z)W_{q+\gamma}(z)\mathrm{d}z\Big)
\nonumber\\
\hspace{-0.3cm}&&\hspace{-0.3cm}
-\frac{Z_q(b,\Phi_{q+\gamma})}{\Phi_{q+\gamma}Z_q(b-x,\Phi_{q+\gamma})-\gamma W_q(b-x)}\Big(W_{q}^{\prime}(b-x+y)
\nonumber\\
\hspace{-0.3cm}&&\hspace{-0.3cm}
+\gamma\int_0^yW_q^{\prime+}(b-x+y-z)W_{q+\gamma}(z)\mathrm{d}z\Big)\Big]\bigg],\quad x\in(0,b),
\end{eqnarray}
and
\begin{eqnarray}\label{Vx.pp.2}
V_{0,b}^{\omega\prime\prime}(x)
\hspace{-0.3cm}&=&\hspace{-0.3cm}
\Phi_{q+\gamma}\frac{\gamma Z_q(b)-\phi(q+\gamma)}{(q+\gamma)Z_q(b,\Phi_{q+\gamma})}e^{-\Phi_{q+\gamma}(x-b)}
-\frac{\Phi_{q+\gamma}e^{-\Phi_{q+\gamma}(x-b)}}{Z_q(b,\Phi_{q+\gamma})}\bigg[\lambda\int_{0+}^bW_q(y)\omega^{\prime}_+(y)\mathrm{d}y
\nonumber\\
\hspace{-0.3cm}&&\hspace{-0.3cm}
+\lambda\int_0^{\infty}\omega^{\prime}_+(b+y)\Big[W_q(b+y)
+\gamma\int_0^yW_q(b+y-z)W_{q+\gamma}(z)\mathrm{d}z-\frac{Z_q(b,\Phi_{q+\gamma})}{\Phi_{q+\gamma}e^{\Phi_{q+\gamma}(x-b)}}
\nonumber\\
\hspace{-0.3cm}&&\hspace{-0.3cm}
\times\Big(W_{q}^{\prime+}(b-x+y)
+\gamma\int_0^yW^{\prime+}_q(b-x+y-z)W_{q+\gamma}(z)\mathrm{d}z\Big)\Big]\mathrm{d}y\bigg],\quad x\in(b,\infty).
\end{eqnarray}
Combining (\ref{Vx.pp.1}), (\ref{Vx.pp.2}), we have
\begin{eqnarray}
V_{0,b}^{\omega\prime\prime}(b+)-V_{0,b}^{\omega\prime\prime}(b-)\hspace{-0.3cm}&=&\hspace{-0.3cm}
\frac{q\gamma}{q+\gamma}W_q(0+)+\frac{\gamma Z_q(b)-\phi(q+\gamma)}{(q+\gamma)Z_q(b,\Phi_{q+\gamma})}\gamma W_q(0+)-\frac{\gamma W_q(0+)}{Z_q(b,\Phi_{q+\gamma})}
\nonumber\\
\hspace{-0.3cm}&&\hspace{-3.3cm}
\times\left[\lambda\int_{0+}^bW_q(y)\omega^{\prime}_+(y)\mathrm{d}y+\lambda\int_0^{\infty}\omega^{\prime}_+(b+y)\Big(W_{q}(b+y)+\gamma\int_0^yW_q(b+y-z)W_{q+\gamma}(z)\mathrm{d}z\Big)\right]
\nonumber\\
\hspace{-0.3cm}&&\hspace{-3.3cm}=0.\nonumber
\end{eqnarray}
As a result, $V_{0,b}^{\omega}$ is twice continuously differentiable over $(0,\infty)$ when X has paths of unbounded variation. This completes the proof.
\end{proof}

Bearing in mind that the optimal strategy solving the auxiliary problem \eqref{valefunctionofpi} is conjectured to be some double barrier periodic dividend and capital injection strategy, we hence need to characterize the barrier level corresponding to the optimal strategy among the set of double barrier strategies. To achieve this goal, we manage to express the derivative of the value function in terms of the Laplace transform and potential measure of the spectrally positive L\'evy process controlled by a single barrier periodic dividend strategy until the first time it down-crosses $0$.
Actually, this new compact expression will facilitate us to identify the candidate optimal strategy among the set of double barrier dividend and capital injection strategies and the slope conditions of the value function of this candidate optimal double barrier strategy.
To implement these ideas, we should first provide some preliminary results concerning the spectrally positive L\'evy process controlled by a single barrier periodic dividend strategy. Put
$$U_t^b:=X_t-D_t^b,\quad t\geq 0,$$
where $X_{t}$ is the single spectrally positive L\'evy process, $D_t^b$ is the cumulative dividend process defined as
$$D_t^b=\sum_{n=1}^{\infty}\left((U_{T_n-}^b+\Delta X_{T_n}-b)\right)\mathbf{1}_{\{T_n\leq t\}},\quad  t\geq0,$$
with $(T_{n})_{n\geq1}$ being the event epochs of an independent Poisson process.
In addition, denote the down-crossing and up-crossing times of $U_t^b$, respectively, by $$\overline{\kappa}_a^-:=\inf\{t\geq0;U_t^b<a\}\quad\text{and}\quad\overline{\kappa}_c^+:=\inf\{t\geq0;U_t^b>c\}.$$
The following Lemma \ref{lem.pm} gives the potential measure of the process $U_t^b$ until the first time it down-crosses $0$. It seems a little bit unexpected that this problem has not been considered in the literature. We hence provide this result with a detailed proof.

\begin{lem}\label{lem.pm}
For $q>0$, $\gamma>0$ and $b>0$, we have
\begin{eqnarray}\label{U.b}
\hspace{-1cm}\mathrm{P}_x\left(U_{e_q}^{b}\in\mathrm{d}y;e_q<\overline{\kappa}_0^-\right)
\hspace{-0.3cm}&=&\hspace{-0.3cm}
q\bigg[Z_q(b-x,\Phi_{q+\gamma})W_{q+\gamma}(y-b)-W_{q+\gamma}(y-x)
\nonumber\\
\hspace{-0.3cm}&&\hspace{-4cm}
+\gamma\int_0^{b-x}W_q(b-x-z)W_{q+\gamma}(y-b+z)\mathrm{d}z\bigg]\mathbf{1}_{(b,\infty)}(y)\mathrm{d}y-qW_q(y-x)\mathbf{1}_{(0,b)}(y)\mathrm{d}y
\nonumber\\
\hspace{-0.3cm}&&\hspace{-4cm}
+\frac{\gamma Z_q(b-x)+qZ_q(b-x,\Phi_{q+\gamma})}{\gamma Z_q(b)+qZ_q(b,\Phi_{q+\gamma})}\bigg[qW_q(y)\mathbf{1}_{(0,b)}(y)\mathrm{d}y+q\Big(W_{q+\gamma}(y)
\nonumber\\
\hspace{-0.3cm}&&\hspace{-4cm}
-Z_q(b,\Phi_{q+\gamma})W_{q+\gamma}(y-b)-\gamma\int_0^bW_q(b-z)W_{q+\gamma}(y-b+z)\mathrm{d}z\Big)\mathbf{1}_{(b,\infty)}(y)\mathrm{d}y\bigg], \quad x\in(0,\infty).
\end{eqnarray}
\end{lem}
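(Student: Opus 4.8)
The plan is to reduce the killed potential measure to two classical periodic-observation functionals of the uncontrolled process via a single renewal step, and then to pin down the only unknown constant by a self-consistency relation at the barrier.

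Denote by $g(x)$ the left-hand side of \eqref{U.b} and let $\rho:=\inf\{T_n:U_{T_n-}^b>b\}$ be the first epoch at which a dividend is actually paid. The key observation is that before $\rho$ no dividend has been deducted and no capital is ever injected, so that $U^b\equiv X$ and $\overline{\kappa}_0^-=\tau_0^-$ on $[0,\rho)$, while at $\rho$ the controlled process is pushed down exactly to $b$, i.e. $U_\rho^b=b$. Applying the strong Markov property at $\rho$ therefore yields, for every $x>0$,
\[
g(x)=\mathrm{P}_x\big(X_{e_q}\in\mathrm{d}y;\,e_q<\rho\wedge\tau_0^-\big)+\mathrm{E}_x\big[e^{-q\rho}\mathbf{1}_{\{\rho<\tau_0^-\}}\big]\,g(b).
\]
Thus the whole problem splits into (i) the $q$-potential measure of $X$ killed at the minimum of $\tau_0^-$ and the first Poisson observation strictly above $b$, and (ii) the Laplace transform $\mathrm{E}_x[e^{-q\rho}\mathbf{1}_{\{\rho<\tau_0^-\}}]$.

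To evaluate (i) and (ii) I would condition on the first Poisson arrival $T_1$ and exploit that the independent minimum $e_q\wedge T_1$ is exponential with rate $q+\gamma$. On $\{e_q<T_1\}$ the clock rings before any observation and $U^b_{e_q}=X_{e_q}$, which contributes $\tfrac{q}{q+\gamma}\mathrm{P}_x(X_{e_{q+\gamma}}\in\mathrm{d}y;\,e_{q+\gamma}<\tau_0^-)$; on $\{T_1<e_q\}$ one observes $X_{T_1}$ and either pays a dividend (restart at $b$) or restarts at $X_{T_1}\in(0,b]$. Here I would use the explicit $(q+\gamma)$-resolvent of the spectrally positive process killed at $\tau_0^-$, namely $u_{q+\gamma}(x,y)=e^{-\Phi_{q+\gamma}x}W_{q+\gamma}(y)-W_{q+\gamma}(y-x)$, together with the creeping first-passage transform $\mathrm{E}_x[e^{-(q+\gamma)\tau_b^-}]=e^{-\Phi_{q+\gamma}(x-b)}$ (recall $X$ passes downward continuously). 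For $x>b$ this closes immediately and reproduces the explicit $\mathbf{1}_{(b,\infty)}$ block $q[e^{-\Phi_{q+\gamma}(x-b)}W_{q+\gamma}(y-b)-W_{q+\gamma}(y-x)]$ together with the factor $\tfrac{\gamma}{q+\gamma}+\tfrac{q}{q+\gamma}e^{-\Phi_{q+\gamma}(x-b)}$ in front of $g(b)$; for $x\in(0,b)$ the same conditioning produces a renewal equation on $(0,b)$ with kernel $u_{q+\gamma}$, whose solution — equivalently, the periodic-observation two-sided exit identities already invoked in \eqref{V.b.3} (cf. (23) and (27) of \cite{Albrecher16} and Lemma~3.4 of \cite{Zhao17}) — is expressible through $W_q$, $W_{q+\gamma}$, $Z_q(\cdot)$ and the second scale function $Z_q(\cdot,\Phi_{q+\gamma})$, the combination $\gamma Z_q(b-x)+qZ_q(b-x,\Phi_{q+\gamma})$ arising naturally. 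With (i) and (ii) in hand, the single unknown $g(b)$ is fixed by self-consistency: setting $x=b$ in the renewal identity gives $g(b)=\mathrm{P}_b(X_{e_q}\in\mathrm{d}y;\,e_q<\rho\wedge\tau_0^-)/(1-\mathrm{E}_b[e^{-q\rho}\mathbf{1}_{\{\rho<\tau_0^-\}}])$, and I expect this to produce the normalising factor $\gamma Z_q(b)+qZ_q(b,\Phi_{q+\gamma})$ seen in \eqref{U.b}; substituting $g(b)$ back and collecting the $\mathbf{1}_{(0,b)}$ and $\mathbf{1}_{(b,\infty)}$ contributions yields the claim. Note there is no atom at $0$ here (in contrast with \eqref{pot.mea}) precisely because $U^b$ is killed, not reflected, at $0$.

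The main obstacle is the $(0,b)$ part of steps (i)–(ii): solving the Poisson-observation two-sided exit problem whose kernel $u_{q+\gamma}(x,y)$ is not of convolution type, and handling correctly the overshoot of the spectrally positive $X$ when it jumps above $b$. A secondary but unavoidable difficulty is that the bounded- and unbounded-variation cases must be separated, since $W_q(0+)$ and the regularity of $W_q$ differ; in taking the limit $x\uparrow b$ I would rely on $\lim_{u\downarrow 0}W_{q+\gamma}(u)/W_q(u)=1$ from \eqref{lim.2}, exactly as in the proof of Lemma \ref{lem.w}. A final consistency check is that the assembled right-hand side is a genuine nonnegative (sub-probability) measure, the apparent negative pieces $-qW_q(y-x)\mathbf{1}_{(0,b)}$ being compensated by the $g(b)$-term.
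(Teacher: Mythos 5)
Your overall skeleton is sound and, for $x>b$, coincides with the paper's: a one-step renewal at the first dividend epoch $\rho$ (so that $U^b\equiv X$ and $\overline{\kappa}_0^-=\tau_0^-$ before $\rho$, and $U^b_\rho=b$), followed by a self-consistency relation at $x=b$ to pin down $g(b)$, with the bounded/unbounded variation dichotomy handled by a limit $x\uparrow b$. However, the central computation is missing. For $x\in(0,b)$ you reduce the two unknown functionals $\mathrm{P}_x(X_{e_q}\in\mathrm{d}y;\,e_q<\rho\wedge\tau_0^-)$ and $\mathrm{E}_x[e^{-q\rho}\mathbf{1}_{\{\rho<\tau_0^-\}}]$ to a Fredholm equation with the non-convolution kernel $u_{q+\gamma}$, explicitly flag its solution as "the main obstacle", and then assert the answer by appeal to the identities behind \eqref{V.b.3} (Albrecher--Ivanovs--Zhou and Zhao et al.). Those identities do not apply here: they concern the process \emph{reflected} at $0$ from below (the process $\widetilde{Y}$), whereas the present lemma concerns the process \emph{killed} at $\tau_0^-$. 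Indeed the companion quantity $\mathrm{E}_x[e^{-q\overline{\kappa}_0^-}]$ is Lemma \ref{lem.stop}, which the paper states is itself new to the literature, so there is nothing off the shelf to invoke. The idea you are missing is the paper's decomposition of the $(0,b)$ case at the \emph{continuous} up-crossing time $\tau_b^+$ of the uncontrolled $X$ (two-sided exit from $[0,b]$), which bypasses the Fredholm equation entirely: it reduces everything to \eqref{fluc.1}--\eqref{fluc.2}, the killed two-sided resolvent, the overshoot transform $\mathrm{E}_x[e^{-q\tau_b^+}e^{\Phi_{q+\gamma}(b-X_{\tau_b^+})}\mathbf{1}_{\{\tau_b^+<\tau_0^-\}}]=Z_q(b-x,\Phi_{q+\gamma})-W_q(b-x)Z_q(b,\Phi_{q+\gamma})/W_q(b)$, and an identity of the type established in the proof of Lemma \ref{lem2.1} for $\mathrm{E}_x[e^{-q\tau_b^+}W_{q+\gamma}(y-X_{\tau_b^+})\mathbf{1}_{\{\tau_b^+<\tau_0^-\}}]$; it is precisely this last object that produces the terms $\gamma\int_0^{b-x}W_q(b-x-z)W_{q+\gamma}(y-b+z)\mathrm{d}z$ and the combination $\gamma Z_q(b-x)+qZ_q(b-x,\Phi_{q+\gamma})$ that you say "arise naturally" but never derive.

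A second, unaddressed defect concerns the unbounded variation case. There, $W_q(0+)=0$, so letting $x\uparrow b$ in the $(0,b)$ identity makes both sides vanish and the self-consistency relation for $g(b)$ degenerates to $0=0$; the limit $\lim_{u\downarrow0}W_{q+\gamma}(u)/W_q(u)=1$ alone does not rescue it. The paper needs a genuinely separate argument: starting from $b$, condition at the (continuous) down-crossing $\tau_x^-$ to obtain a second relation between $\overline{g}(b)$ and $\overline{g}(x)$, substitute it back, divide by $W_q(b-x)$, and only then send $x\uparrow b$ using \eqref{lim.1}--\eqref{lim.2}. Your plan acknowledges the bounded/unbounded split but does not contain this extra step, so as written the constant $g(b)$ is not determined in the unbounded variation case.
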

\begin{proof}
Denote by $\overline{g}(x)$ the left hand side of (\ref{U.b}). One can verify that
\begin{eqnarray}
\hspace{-0.3cm}\overline{g}(x)\hspace{-0.3cm}&=&
\hspace{-0.3cm}
\mathrm{P}_{x}\left(U^{b}_{e_q}\in\mathrm{d}y;{e_q<T_{1}\wedge \overline{\kappa}_{b}^{-}}\right)+\mathrm{P}_{x}\left(U^{b}_{e_q}\in\mathrm{d}y;e_q>T_{1}\wedge\overline{\kappa}_{b}^{-},e_q<\overline{\kappa}_0^-\right)
\nonumber\\
\hspace{-0.3cm}&=&
\hspace{-0.3cm}
\frac{q}{q+\gamma}\mathrm{P}_{x}\left(X_{e_{q+\gamma}}\in\mathrm{d}y;{e_{q+\gamma}< \tau_{b}^{-}}\right)+\mathrm{P}_x\Big(U^b_{e_q}\in\mathrm{d}y;e_q>T_1,\overline{\kappa}_b^->T_1,e_q<\overline{\kappa}_0^-\Big)
\nonumber\\
\hspace{-0.3cm}&&\hspace{-0.3cm}
+\mathrm{P}_x\Big(U^b_{e_q}\in\mathrm{d}y;e_q>\overline{\kappa}_b^-,T_1>\overline{\kappa}_b^-,e_q<\overline{\kappa}_0^-\Big)
\nonumber\\
\hspace{-0.3cm}&=&
\hspace{-0.3cm}
\frac{q}{q+\gamma}\mathrm{P}_{x}\left(X_{e_{q+\gamma}}\in\mathrm{d}y;{e_{q+\gamma}< \tau_{b}^{-}}\right)
+\left[\mathrm{E}_{x}\left[e^{-(q+\gamma)\tau_{b}^{-}}\right]
+\frac{\gamma}{q+\gamma}\mathrm{E}_{x}\left[1-e^{-(q+\gamma) \tau_{b}^{-}}\right]\right]\overline{g}(b)
\nonumber\\
\hspace{-0.3cm}&=&\hspace{-0.3cm}
\frac{q}{q+\gamma}\mathrm{P}_{x}\left(X_{e_{q+\gamma}}\in\mathrm{d}y;{e_{q+\gamma}< \tau_{b}^{-}}\right)+\left[\frac{\gamma}{q+\gamma}+\frac{q}{q+\gamma}\mathrm{E}_{x}\left[e^{-(q+\gamma) \tau_{b}^{-}}\right]\right]\overline{g}(b)
\nonumber\\
\hspace{-0.3cm}&=&\hspace{-0.3cm}
q\left[e^{\Phi_{q+\gamma}(b-x)}W_{q+\gamma}(y-b)-W_{q+\gamma}(y-x)\right]\mathbf{1}_{(b,\infty)}(y)\mathrm{d}y
\nonumber\\
\hspace{-0.3cm}&&\hspace{-0.3cm}
+\frac{\gamma}{q+\gamma}\overline{g}(b)+\frac{q}{q+\gamma}e^{\Phi_{q+\gamma}(b-x)}\overline{g}(b),\quad x\in(b,\infty).\nonumber
\end{eqnarray}
Using a similar manner and \eqref{fluc.2}, one can verify that
\begin{eqnarray}\label{U.b.2}
\overline{g}(x)
\hspace{-0.3cm}&=&\hspace{-0.3cm}
\mathrm{P}_{x}\Big(U^{b}_{e_q}\in\mathrm{d}y;e_q< \overline{\kappa}_{b}^{+}\wedge\overline{\kappa}_0^-\Big)+\mathrm{P}_{x}\Big(U^{b}_{e_q}\in\mathrm{d}y;\overline{\kappa}_b^+<e_q<\overline{\kappa}_{0}^{-}\Big)
\nonumber\\
\hspace{-0.3cm}&=&
\hspace{-0.3cm}
\mathrm{P}_{x}\Big(X_{e_q}\in\mathrm{d}y;e_q< \tau_{b}^{+}\wedge\tau_0^-\Big)+\mathrm{E}_x\big[e^{-q\tau_b^+}\mathbf{1}_{\{\tau_b^+<\tau_0^-\}}\overline{g}(X_{\tau_b^+})\big]
\nonumber\\
\hspace{-0.3cm}&=&
\hspace{-0.3cm}
q\left[\frac{W_q(b-x)}{W_q(b)}W_q(y)-W_q(y-x)\right]\mathbf{1}_{(0,b)}(y)\mathrm{d}y+\frac{\gamma}{q+\gamma}\mathrm{E}_x\Big[e^{-q\tau_b^+}\mathbf{1}_{\{\tau_b^+<\tau_0^-\}}\Big]\overline{g}(b)
\nonumber\\
\hspace{-0.3cm}&&\hspace{-0.3cm}
+q\bigg[\mathrm{E}_x\left[e^{-q\tau_b^+}e^{\Phi_{q+\gamma}(b-X_{\tau_b^+})}\mathbf{1}_{\{\tau_b^+<\tau_0^-\}}\right]W_{q+\gamma}(y-b)
\nonumber\\
\hspace{-0.3cm}&&\hspace{-0.3cm}
-\mathrm{E}_x\left[e^{-q\tau_b^+}W_{q+\gamma}(y-X_{\tau_b^+})\mathbf{1}_{\{\tau_b^+<\tau_0^-\}}\right]\bigg]\mathbf{1}_{(b,\infty)}(y)\mathrm{d}y
\nonumber\\
\hspace{-0.3cm}&&\hspace{-0.3cm}
+\frac{q}{q+\gamma}\mathrm{E}_x\left[e^{-\tau_b^+}e^{\Phi_{q+\gamma}(b-X_{\tau_b^+})}\mathbf{1}_{\{\tau_b^+<\tau_0^-\}}\right]\overline{g}(b)
\nonumber\\
\hspace{-0.3cm}&=&\hspace{-0.3cm}
q\bigg[\left[Z_q(b-x,\Phi_{q+\gamma})-W_q(b-x)\frac{Z_q(b,\Phi_{q+\gamma}) }{W_q(b)}\right]W_{q+\gamma}(y-b)-W_{q+\gamma}(y-x)
\nonumber\\
\hspace{-0.3cm}&&\hspace{-0.3cm}
+\gamma\int_0^{b-x}W_q(b-x-z)W_{q+\gamma}(y-b+z)\mathrm{d}z+\frac{W_q(b-x)}{W_q(b)}
\nonumber\\
\hspace{-0.3cm}&&\hspace{-0.3cm}
\times\left[W_{q+\gamma}(y)-\gamma\int_0^{b}W_q(b-z)W_{q+\gamma}(y-b+z)\mathrm{d}z\right]\bigg]\mathbf{1}_{(b,\infty)}(y)\mathrm{d}y
\nonumber\\
\hspace{-0.3cm}&&\hspace{-0.3cm}
+\bigg[\frac{\gamma Z_q(b-x)+qZ_q(b-x,\Phi_{q+\gamma})}{q+\gamma}-\frac{\gamma Z_q(b)+qZ_q(b,\Phi_{q+\gamma})}{q+\gamma}\frac{W_q(b-x)}{W_q(b)}\bigg]\overline{g}(b)\nonumber\\
\hspace{-0.3cm}&&\hspace{-0.3cm}
+q\left[\frac{W_q(b-x)}{W_q(b)}W_q(y)-W_q(y-x)\right]\mathbf{1}_{(0,b)}(y)\mathrm{d}y,\quad x\in(0,b).
\end{eqnarray}
When $X$ has paths of bounded variation, letting $x\uparrow b$ in (\ref{U.b.2}) yields
\begin{eqnarray}
\label{gb}
\overline{g}(b)
\hspace{-0.3cm}&=&\hspace{-0.3cm}
\left[\frac{\gamma Z_q(b)+qZ_q(b,\Phi_{q+\gamma})}{q+\gamma}\right]^{-1}\bigg[qW_q(y)\mathbf{1}_{(0,b)}(y)\mathrm{d}y+q\Big(W_{q+\gamma}(y)
\nonumber\\
\hspace{-0.3cm}&&\hspace{-0.3cm}
-Z_q(b,\Phi_{q+\gamma})W_{q+\gamma}(y-b)-\gamma\int_0^bW_q(b-z)W_{q+\gamma}(y-b+z)\mathrm{d}z\Big)\mathbf{1}_{(b,\infty)}(y)\mathrm{d}y\bigg].
\end{eqnarray}
We now claim that \eqref{gb} remains valid even when $X$ has paths of unbounded variation. Using a method similar to that of the proof of Lemma \ref{lem.w}, we get
\begin{eqnarray}\label{g.bar.b}
\overline{g}(b)
\hspace{-0.3cm}&=&\hspace{-0.3cm}
\mathrm{P}_b\Big(U_{e_q}^b\in\mathrm{d}y;e_q<T_1\wedge\overline{\kappa}_x^-\Big)+\mathrm{P}_b\Big(U_{e_q}^b\in\mathrm{d}y;\overline{\kappa}_x^-<T_1\wedge e_q,e_q<\overline{\kappa}_0^-\Big)
\nonumber\\
\hspace{-0.3cm}&&\hspace{-0.3cm}
+\mathrm{P}_b\Big(U_{e_q}^b\in\mathrm{d}y;T_1<\overline{\kappa}_x^-\wedge e_q,e_q<\overline{\kappa}_0^-\Big)
\nonumber\\
\hspace{-0.3cm}&=&\hspace{-0.3cm}
\frac{q}{q+\gamma}\mathrm{P}_b\Big(X_{e_{q+\gamma}}\in\mathrm{d}y;e_{q+\gamma}<\tau_x^-\Big)+\mathrm{E}_b\Big[e^{-(q+\gamma)\tau_x^-}\Big]\overline{g}(x)
\nonumber\\
\hspace{-0.3cm}&&\hspace{-0.3cm}
+\frac{\gamma}{q+\gamma}\Big[\mathrm{P}_b\big(e_{q+\gamma}<\tau_x^-\big)-\mathrm{P}_b\big(X_{e_{q+\gamma}}\in(x,b);\,e_{q+\gamma}<\tau_x^-\big)\Big]\overline{g}(b)
\nonumber\\
\hspace{-0.3cm}&&\hspace{-0.3cm}
+\frac{\gamma}{q+\gamma}\int_x^b\overline{g}(y)\mathrm{P}_b\left(X_{e_{q+\gamma}}\in\mathrm{d}y;\,e_{q+\gamma}<\tau_x^-\right)
\nonumber\\
\hspace{-0.3cm}&=&\hspace{-0.3cm}
q\Big(e^{-\Phi_{q+\gamma}(b-x)}W_{q+\gamma}(y-x)-W_{q+\gamma}(y-b)\Big)\mathbf{1}_{(x,\infty)}(y)\mathrm{d}y+e^{-\Phi_{q+\gamma}(b-x)}\overline{g}(x)
\nonumber\\
\hspace{-0.3cm}&&\hspace{-0.3cm}
+\frac{\gamma}{q+\gamma}\overline{g}(b)\left[1-e^{-\Phi_{q+\gamma}(b-x)}-(q+\gamma)\int_0^{b-x}e^{-\Phi_{q+\gamma}(b-x)}W_{q+\gamma}(b-x-y)\mathrm{d}y\right]
\nonumber\\
\hspace{-0.3cm}&&\hspace{-0.3cm}
+{\gamma}\int_0^{b-x}e^{-\Phi_{q+\gamma}(b-x)}W_{q+\gamma}(b-x-y)\overline{g}(b-y)\mathrm{d}y,\quad x\in(0,b).
\end{eqnarray}
Plugging (\ref{g.bar.b}) into (\ref{U.b.2}), and then rearranging the yielding equation gives
\begin{eqnarray}\label{g.bar.x}
\hspace{-0.3cm}&&\hspace{-0.3cm}
\overline{g}(x)\left[1-\frac{\gamma Z_q(b-x)+qZ_q(b-x,\Phi_{q+\gamma})-\Big(\gamma Z_q(b)+qZ_q(b,\Phi_{q+\gamma})\Big)\frac{W_q(b-x)}{W_q(b)}}{qe^{\Phi_{q+\gamma}(b-x)}+\gamma Z_{q+\gamma}(b-x)}\right]
\nonumber\\
\hspace{-0.3cm}&=&\hspace{-0.3cm}
q\left[\frac{W_q(b-x)}{W_q(b)}W_q(y)-W_q(y-x)\right]\mathbf{1}_{(0,b)}(y)\mathrm{d}y
+\frac{qW_q(b-x)}{W_q(b)}
\Big[W_{q+\gamma}(y)
\nonumber\\
\hspace{-0.3cm}&&\hspace{-0.3cm}
-Z_q(b,\Phi_{q+\gamma})W_{q+\gamma}(y-b)
-\gamma\int_0^{b}W_q(b-z)W_{q+\gamma}(y-b+z)\mathrm{d}z\Big]\mathbf{1}_{(b,\infty)}(y)\mathrm{d}y
\nonumber\\
\hspace{-0.3cm}&&\hspace{-0.3cm}
+q\bigg[Z_q(b-x,\Phi_{q+\gamma})W_{q+\gamma}(y-b)-W_{q+\gamma}(y-x)
\nonumber\\
\hspace{-0.3cm}&&\hspace{-0.3cm}
+\gamma\int_0^{b-x}W_q(b-x-z)W_{q+\gamma}(y-b+z)\mathrm{d}z\bigg]\mathbf{1}_{(b,\infty)}(y)\mathrm{d}y
\nonumber\\
\hspace{-0.3cm}&&\hspace{-0.3cm}
+\bigg[\frac{\gamma Z_q(b-x)+qZ_q(b-x,\Phi_{q+\gamma})}{q+\gamma}-\frac{\gamma Z_q(b)+qZ_q(b,\Phi_{q+\gamma})}{q+\gamma}\frac{W_q(b-x)}{W_q(b)}\bigg]
\nonumber\\
\hspace{-0.3cm}&&\hspace{-0.3cm}
\times\bigg[\frac{q+\gamma e^{-\Phi_{q+\gamma}(b-x)}Z_{q+\gamma}(b-x)}{q+\gamma}\bigg]^{-1}\times\Big[{\gamma}\int_0^{b-x}e^{-\Phi_{q+\gamma}(b-x)}W_{q+\gamma}(b-x-y)\overline{g}(b-y)\mathrm{d}y
\nonumber\\
\hspace{-0.3cm}&&\hspace{-0.3cm}
+q\Big(e^{-\Phi_{q+\gamma}(b-x)}W_{q+\gamma}(y-x)-W_{q+\gamma}(y-b)\Big)\mathbf{1}_{(x,\infty)}(y)\mathrm{d}y
\Big]
\nonumber\\
\hspace{-0.3cm}&=&\hspace{-0.3cm}
q\frac{W_q(b-x)}{W_q(b)}W_q(y)\mathbf{1}_{(0,b)}(y)\mathrm{d}y-qW_q(y-x)\mathbf{1}_{(x,b)}(y)\mathrm{d}y
+\frac{qW_q(b-x)}{W_q(b)}
\Big[W_{q+\gamma}(y)
\nonumber\\
\hspace{-0.3cm}&&\hspace{-0.3cm}
-Z_q(b,\Phi_{q+\gamma})W_{q+\gamma}(y-b)
-\gamma\int_0^{b}W_q(b-z)W_{q+\gamma}(y-b+z)\mathrm{d}z\Big]\mathbf{1}_{(b,\infty)}(y)\mathrm{d}y
\nonumber\\
\hspace{-0.3cm}&&\hspace{-0.3cm}
+\gamma\int_0^{b-x}W_q(b-x-z)W_{q+\gamma}(y-b+z)\mathrm{d}z\mathbf{1}_{(b,\infty)}(y)\mathrm{d}y
-\frac{W_q(b-x)}{W_q(b)}
\nonumber\\
\hspace{-0.3cm}&&\hspace{-0.3cm}
\times\frac{\gamma Z_q(b)+qZ_q(b,\Phi_{q+\gamma})}{q+\gamma e^{-\Phi_{q+\gamma}(b-x)}Z_{q+\gamma}(b-x)}
\Big[{\gamma}\int_0^{b-x}e^{-\Phi_{q+\gamma}(b-x)}W_{q+\gamma}(b-x-y)\overline{g}(b-y)\mathrm{d}y
\nonumber\\
\hspace{-0.3cm}&&\hspace{-0.3cm}
+q\Big(e^{-\Phi_{q+\gamma}(b-x)}W_{q+\gamma}(y-x)-W_{q+\gamma}(y-b)\Big)\mathbf{1}_{(x,\infty)}(y)\mathrm{d}y
\Big]
\nonumber\\
\hspace{-0.3cm}&&\hspace{-0.3cm}
+\frac{\gamma Z_q(b-x)+qZ_q(b-x,\Phi_{q+\gamma})}{q+\gamma e^{-\Phi_{q+\gamma}(b-x)}Z_{q+\gamma}(b-x)}\times\Big[{\gamma}\int_0^{b-x}e^{-\Phi_{q+\gamma}(b-x)}W_{q+\gamma}(b-x-y)\overline{g}(b-y)\mathrm{d}y
\nonumber\\
\hspace{-0.3cm}&&\hspace{-0.3cm}
+q\Big(e^{-\Phi_{q+\gamma}(b-x)}W_{q+\gamma}(y-x)-W_{q+\gamma}(y-b)\Big)\mathbf{1}_{(x,b)}(y)\mathrm{d}y
\Big]
\nonumber\\
\hspace{-0.3cm}&&\hspace{-0.3cm}
+q\bigg[\frac{\gamma e^{-\Phi_{q+\gamma}(b-x)}\left(\int_0^{b-x}qW_q(z)\mathrm{d}z-\int_0^{b-x}(q+\gamma)W_{q+\gamma}(z)\mathrm{d}z\right)}{q+\gamma e^{-\Phi_{q+\gamma}(b-x)}Z_{q+\gamma}(b-x)}W_{q+\gamma}(y-x)
\nonumber\\
\hspace{-0.3cm}&&\hspace{-0.3cm}
-\frac{q\int_0^{b-x}\gamma e^{-\Phi_{q+\gamma}z}W_q(z)\mathrm{d}z+q\int_0^{b-x}(q+\gamma)W_{q+\gamma}(z)\mathrm{d}z}{q+\gamma e^{-\Phi_{q+\gamma}(b-x)}Z_{q+\gamma}(b-x)}W_{q+\gamma}(y-b)\bigg]\mathbf{1}_{(b,\infty)}(y)\mathrm{d}y
.
\end{eqnarray}
Dividing the both sides of (\ref{g.bar.x}) with $W_q(b-x)$, sending $x$ upward to $b$, and then using (\ref{lim.1}), (\ref{lim.2}), we finally find that \eqref{gb} also holds true for the case when $X$ has paths of unbounded variation. Plugging \eqref{gb} into \eqref{U.b.2} yields \eqref{U.b}. The proof is complete.
\end{proof}

The following Lemma \ref{lem.stop} gives the Laplace transform of the first passage time of $0$ for the spectrally positive L\'evy process with dividends subtracted according to the single barrier periodic dividend strategy with barrier level $b$. As far as the authors know, this result is also new to the literature.

\begin{lem}\label{lem.stop}
For $q>0$, $\gamma>0$ and $b>0$, we have
\begin{eqnarray}\label{over.kappa}
\mathrm{E}_x\Big[e^{-q\overline{\kappa}_0^-}\Big]=\frac{\gamma Z_q(b-x)+qZ_q(b-x,\Phi_{q+\gamma})}{\gamma Z_q(b)+qZ_q(b,\Phi_{q+\gamma})}, \quad x\in(0,\infty).
\end{eqnarray}
\end{lem}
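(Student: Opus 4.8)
The plan is to compute $h(x):=\mathrm{E}_x\big[e^{-q\overline{\kappa}_0^-}\big]$ by a direct first–passage decomposition, treating the starting point above and below the dividend barrier $b$ separately and exploiting that, before any dividend is paid, the controlled process $U^b$ coincides with the uncontrolled $X$. Since the Poisson clock and $X$ are independent, $X$ almost surely does not jump at a dividend epoch, so each dividend lump resets $U^b$ exactly to the level $b$.

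First I would treat $x\in(b,\infty)$. There $U^b$ agrees with $X$ up to $T_1\wedge\tau_b^-$, and at that time $U^b$ equals $b$: either by the spectrally positive downward creeping at $\tau_b^-$, or because the dividend paid at $T_1$ caps $X_{T_1}$ back to $b$. As reaching $0$ forces a prior passage through $b$, no ruin can occur earlier, and the strong Markov property gives $h(x)=\mathrm{E}_x[e^{-q(T_1\wedge\tau_b^-)}]h(b)$. With $T_1\sim\mathrm{Exp}(\gamma)$ and $\mathrm{E}_x[e^{-(q+\gamma)\tau_b^-}]=e^{-\Phi_{q+\gamma}(x-b)}$ this yields $h(x)=\big(\tfrac{\gamma}{q+\gamma}+\tfrac{q}{q+\gamma}e^{-\Phi_{q+\gamma}(x-b)}\big)h(b)$, which agrees with \eqref{over.kappa} on $(b,\infty)$ once $h(b)$ is known, because $Z_q(b-x)=1$ and $Z_q(b-x,\Phi_{q+\gamma})=e^{-\Phi_{q+\gamma}(x-b)}$ there.

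Next, for $x\in(0,b)$, the process $U^b$ coincides with $X$ until $\tau_0^-\wedge\tau_b^+$. Conditioning on which boundary is hit first and applying the strong Markov property at $\tau_b^+$ (whose overshoot satisfies $X_{\tau_b^+}>b$, landing me in the region already solved) gives $h(x)=\mathrm{E}_x[e^{-q\tau_0^-}\mathbf{1}_{\{\tau_0^-<\tau_b^+\}}]+\mathrm{E}_x[e^{-q\tau_b^+}\mathbf{1}_{\{\tau_b^+<\tau_0^-\}}\,h(X_{\tau_b^+})]$. Substituting the $(b,\infty)$ formula for $h(X_{\tau_b^+})$, the computation requires exactly three fluctuation identities: \eqref{fluc.1}, \eqref{fluc.2}, and the overshoot identity $\mathrm{E}_x[e^{-q\tau_b^+}e^{\Phi_{q+\gamma}(b-X_{\tau_b^+})}\mathbf{1}_{\{\tau_b^+<\tau_0^-\}}]=Z_q(b-x,\Phi_{q+\gamma})-\frac{Z_q(b,\Phi_{q+\gamma})}{W_q(b)}W_q(b-x)$, the same second–scale–function identity that already appeared in the proof of Lemma \ref{lem.pm} (cf. \eqref{U.b.2}, and (23),(27) of \cite{Albrecher16}). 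This produces
\[
h(x)=\frac{W_q(b-x)}{W_q(b)}+\frac{h(b)}{q+\gamma}\Big[\gamma Z_q(b-x)+qZ_q(b-x,\Phi_{q+\gamma})-\tfrac{W_q(b-x)}{W_q(b)}\big(\gamma Z_q(b)+qZ_q(b,\Phi_{q+\gamma})\big)\Big].
\]

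It then remains to pin down the single constant $h(b)$. Letting $x\uparrow b$ and using continuity of $h$ together with $Z_q(0)=Z_q(0,\Phi_{q+\gamma})=1$, the bracket collapses and one gets $0=\frac{W_q(0+)}{W_q(b)}\big[1-h(b)\,\frac{\gamma Z_q(b)+qZ_q(b,\Phi_{q+\gamma})}{q+\gamma}\big]$. In the bounded–variation case $W_q(0+)=1/\overline{c}>0$, whence $h(b)=\frac{q+\gamma}{\gamma Z_q(b)+qZ_q(b,\Phi_{q+\gamma})}$; inserting this back, the $W_q(b-x)/W_q(b)$ terms cancel and I recover precisely \eqref{over.kappa}. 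I expect the main obstacle to be the unbounded–variation case, where $W_q(0+)=0$ makes the limiting equation the vacuous $0=0$ and leaves $h(b)$ undetermined by this route. I would resolve it exactly as in Lemma \ref{lem.pm}: obtain a second expression for $h(b)$ by decomposing at the first passage of $X$ below an intermediate level $x\in(0,b)$ against the first Poisson epoch (mirroring \eqref{g.b.unb}), substitute it into the displayed relation, divide by $W_q(b-x)$ and send $x\uparrow b$ using the limits \eqref{lim.1}--\eqref{lim.2}, whereupon the same value of $h(b)$ emerges. Equivalently, since Lemma \ref{lem.pm} is already available for both path types, one may integrate \eqref{U.b} over $y\in(0,\infty)$ and use $h(x)=1-\mathrm{P}_x(e_q<\overline{\kappa}_0^-)$, handling both cases simultaneously at the cost of grouping the individually divergent scale–function integrals so that their leading $e^{\Phi_{q+\gamma}M}$ contributions cancel.
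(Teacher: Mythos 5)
Your proof is correct and follows essentially the same route as the paper's: the same two first-passage decompositions (at $T_1\wedge\tau_b^-$ above $b$, at $\tau_0^-\wedge\tau_b^+$ below $b$), the same three fluctuation identities, the same determination of $h(b)$ by letting $x\uparrow b$ in the bounded-variation case, and the same intermediate-level decomposition combined with the limits \eqref{lim.1}--\eqref{lim.2} for the unbounded-variation case. The alternative you sketch via integrating Lemma \ref{lem.pm} is a reasonable shortcut but is not the route the paper takes.
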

\begin{proof}
By \eqref{T1wed.tau.b-} and the strong Markov property, we have
\begin{eqnarray}\label{over.kappa.1}
\mathrm{E}_x\Big[e^{-q\overline{\kappa}_0^-}\Big]
\hspace{-0.3cm}&=&\hspace{-0.3cm}\mathrm{E}_x\Big[e^{-q\overline{\kappa}_0^-}\mathbf{1}_{\{T_1\wedge\overline{\kappa}_b^-<\overline{\kappa}_0^-\}}\Big]
=\mathrm{E}_x\Big[e^{-q(T_1\wedge \tau_b^-)}\Big]\mathrm{E}_b\Big[e^{-q\overline{\kappa}_0^-}\Big]
\nonumber\\
\hspace{-0.3cm}&=&\hspace{-0.3cm}
\left(\frac{\gamma}{q+\gamma}+\frac{q}{q+\gamma}e^{-\Phi_{q+\gamma}(x-b)}\right)\mathrm{E}_b\Big[e^{-q\overline{\kappa}_0^-}\Big],\quad x\in(b,\infty),
\end{eqnarray}
and
\begin{eqnarray}\label{stop.time.1}
\mathrm{E}_x\Big[e^{-q\overline{\kappa}_0^-}\Big]
\hspace{-0.3cm}&=&\hspace{-0.3cm}
\mathrm{E}_x\Big[e^{-q\overline{\kappa}_0^-}\mathbf{1}_{\{\overline{\kappa}_0^-<\overline{\kappa}_b^+\}}\Big]+\mathrm{E}_x\Big[e^{-q\overline{\kappa}_0^-}\mathbf{1}_{\{\overline{\kappa}_b^+<\overline{\kappa}_0^-\}}\Big]
\nonumber\\
\hspace{-0.3cm}&=&\hspace{-0.3cm}
\mathrm{E}_x\Big[e^{-q\tau_0^-}\mathbf{1}_{\{\tau_0^-<\tau_b^+\}}\Big]+\mathrm{E}_x\left[e^{-q\tau_b^+}\mathbf{1}_{\{\tau_b^+<\tau_0^-\}}\mathrm{E}_{X_{\tau_b^+}}\left[e^{-q\overline{\kappa}_0^-}\right]\right]
\nonumber\\
\hspace{-0.3cm}&=&\hspace{-0.3cm}
\frac{W_q(b-x)}{W_q(b)}+\mathrm{E}_b\Big[e^{-q\overline{\kappa}_0^-}\Big]\bigg[\frac{\gamma}{q+\gamma}\mathrm{E}_x\left[e^{-q\tau_b^+}\mathbf{1}_{\{\tau_b^+<\tau_0^-\}}\right]
\nonumber\\
\hspace{-0.3cm}&&\hspace{-0.3cm}
+\frac{q}{q+\gamma}\mathrm{E}_x\left[e^{-q\tau_b^+}e^{-\Phi_{q+\gamma}(X_{\tau_b^+}-b)}\mathbf{1}_{\{\tau_b^+<\tau_0^-\}}\right]\bigg]
\nonumber\\
\hspace{-0.3cm}&=&\hspace{-0.3cm}
\frac{W_q(b-x)}{W_q(b)}+\mathrm{E}_b\Big[e^{-q\overline{\kappa}_0^-}\Big]\bigg[\frac{\gamma}{q+\gamma}\left(Z_q(b-x)-\frac{Z_q(b)}{W_q(b)}W_q(b-x)\right)
\nonumber\\
\hspace{-0.3cm}&&\hspace{-0.3cm}
+\frac{q}{q+\gamma}\left(Z_q(b-x,\Phi_{q+\gamma})-\frac{W_q(b-x)}{W_q(b)}Z_q(b,\Phi_{q+\gamma})\right)\bigg],\quad x\in(0,b).
\end{eqnarray}
When $X$ has paths of bounded variation, letting $x\uparrow b$ in (\ref{stop.time.1}) yields
\begin{eqnarray}\label{stop.time.b}
\mathrm{E}_b\left[e^{-q\overline{\kappa}_0^-}\right]=\frac{q+\gamma}{qZ_q(b,\Phi_{q+\gamma})+\gamma Z_q(b)}.
\end{eqnarray}
We claim that (\ref{stop.time.b}) remains valid when $X$ has paths of unbounded variation. Actually, by the strong Markov property, we have
\begin{eqnarray}\label{stop.time.2}
\mathrm{E}_b\left[e^{-q\overline{\kappa}_0^-}\right]
\hspace{-0.3cm}&=&\hspace{-0.3cm}
\mathrm{E}_b\Big[e^{-q\overline{\kappa}_0^-}\mathbf{1}_{\{\overline{\kappa}_x^-<\overline{\kappa}_0^-\wedge T_1\}}\Big]+\mathrm{E}_b\Big[e^{-q\overline{\kappa}_0^-}\mathbf{1}_{\{T_1<\overline{\kappa}_x^-<\overline{\kappa}_0^-\}}\Big]
\nonumber\\
\hspace{-0.3cm}&=&\hspace{-0.3cm}
\mathrm{E}_b\Big[e^{-(q+\gamma)\tau_x^-}\Big]\mathrm{E}_x\left[e^{-q\overline{\kappa}_0^-}\right]+\frac{\gamma}{q+\gamma}\int_x^b\mathrm{E}_y\Big[e^{-q\overline{\kappa}_0^-}\Big]\mathrm{P}_b\left(X_{e_{q+\gamma}}\in\mathrm{d}y;\,e_{q+\gamma}<\tau_x^-\right)
\nonumber\\
\hspace{-0.3cm}&&\hspace{-0.3cm}
+\frac{\gamma}{q+\gamma}\mathrm{E}_b\left[e^{-q\overline{\kappa}_0^-}\right]\Big[\mathrm{P}_b\big(e_{q+\gamma}<\tau_x^-\big)
-\mathrm{P}_b\big(X_{e_{q+\gamma}}\in(x,b);\,e_{q+\gamma}<\tau_x^-\big)\Big]
\nonumber\\
\hspace{-0.3cm}&=&\hspace{-0.3cm}
\frac{\gamma}{q+\gamma}\mathrm{E}_b\left[e^{-q\overline{\kappa}_0^-}\right]\left[1-e^{-\Phi_{q+\gamma}(b-x)}-(q+\gamma)\int_0^{b-x}e^{-\Phi_{q+\gamma}(b-x)}W_{q+\gamma}(b-x-y)\mathrm{d}y\right]
\nonumber\\
\hspace{-0.3cm}&&\hspace{-0.3cm}
+{\gamma}\int_0^{b-x}e^{-\Phi_{q+\gamma}(b-x)}W_{q+\gamma}(b-x-y)\mathrm{E}_{b-y}\left[e^{-q\overline{\kappa}_0^-}\right]\mathrm{d}y
\nonumber\\
\hspace{-0.3cm}&&\hspace{-0.3cm}
+e^{-\Phi_{q+\gamma}(b-x)}\mathrm{E}_x\left[e^{-q\overline{\kappa}_0^-}\right],\quad x\in(0,b).
\end{eqnarray}
Plugging (\ref{stop.time.2}) into (\ref{stop.time.1}), and then rearranging the yielding equation gives
\begin{eqnarray}\label{stop.time.3}
\hspace{-0.3cm}&&\hspace{-0.3cm}
\mathrm{E}_x\left[e^{-q\overline{\kappa}_0^-}\right]\left[1-\frac{\gamma Z_q(b-x)+qZ_q(b-x,\Phi_{q+\gamma})-\Big(\gamma Z_q(b)+qZ_q(b,\Phi_{q+\gamma})\Big)\frac{W_q(b-x)}{W_q(b)}}{qe^{\Phi_{q+\gamma}(b-x)}+\gamma Z_{q+\gamma}(b-x)}\right]
\nonumber\\
\hspace{-0.3cm}&=&\hspace{-0.3cm}
\frac{W_q(b-x)}{W_q(b)}+\bigg[\frac{q+\gamma e^{-\Phi_{q+\gamma}(b-x)}Z_{q+\gamma}(b-x)}{q+\gamma}\bigg]^{-1}
\nonumber\\
\hspace{-0.3cm}&&\hspace{-0.3cm}
\times\bigg[\frac{\gamma Z_q(b-x)+qZ_q(b-x,\Phi_{q+\gamma})}{q+\gamma}
-\frac{\gamma Z_q(b)+qZ_q(b,\Phi_{q+\gamma})}{q+\gamma}\frac{W_q(b-x)}{W_q(b)}\bigg]
\nonumber\\
\hspace{-0.3cm}&&\hspace{-0.3cm}
\times\Big[{\gamma}\int_0^{b-x}e^{-\Phi_{q+\gamma}(b-x)}W_{q+\gamma}(b-x-y)\mathrm{E}_{b-y}\left[e^{-q\overline{\kappa}_0^-}\right]\mathrm{d}y\Big].
\end{eqnarray}
Dividing the both sides of (\ref{stop.time.3}) with $W_q(b-x)$, sending $x$ upward to $b$, and then using (\ref{lim.1}), (\ref{lim.2}), we finally find that \eqref{stop.time.b} also holds true for the case when $X$ has paths of unbounded variation.
Plugging (\ref{stop.time.b}) into \eqref{over.kappa.1} and  (\ref{stop.time.1}) yields \eqref{over.kappa}.
\end{proof}

With the help of Lemmas \ref{lem.pm}-\ref{lem.stop}, we can give, in the following Lemma \ref{LTkappa0-}, an alternative expression for the derivative of the value function of a double barrier strategy for the auxiliary problem, in terms of, the Laplace transform of $\overline{\kappa}_0^-$ as well as the potential measure of $U_t^b$ until $\overline{\kappa}_0^-$.

\begin{lem}\label{LTkappa0-}
We have
\begin{eqnarray}
V_{0,b}^{\omega\prime}(b)=\frac{\phi-1-\mathrm{E}_b\Big[\int_0^{\overline{\kappa}_0^-}e^{-qt}(q\phi-\lambda\omega_+^{\prime}(U_t^b))\mathrm{d}t\Big]}{\mathrm{E}_b\Big[e^{-q\overline{\kappa}_0^-}\Big]Z_q(b,\Phi_{q+\gamma})}+1, \quad b\in(0,\infty).\nonumber
\end{eqnarray}
\end{lem}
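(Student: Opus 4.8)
The plan is to start from the closed-form derivative of $V_{0,b}^{\omega}$ obtained in the proof of Lemma \ref{lem.par.b} and to recognize that, once evaluated at the barrier, its two non-elementary pieces are exactly the quantities $\mathrm{E}_b[e^{-q\overline{\kappa}_0^-}]$ and $\mathrm{E}_b[\int_0^{\overline{\kappa}_0^-}e^{-qt}\omega'_+(U_t^b)\mathrm{d}t]$ supplied by Lemmas \ref{lem.stop} and \ref{lem.pm}. First I would let $x\uparrow b$ in \eqref{Vx.p.1}, which is legitimate by the continuous differentiability in Lemma \ref{lem.par.b}. Using $Z_q(0)=Z_q(0,\Phi_{q+\gamma})=1$ and the vanishing $\int_{0+}^bW_q(y-b)\omega'_+(y)\mathrm{d}y=0$ (since $W_q$ is null on the negative half-line), this collapses to
\[
V_{0,b}^{\omega\prime}(b)=\frac{\gamma}{q+\gamma}-\frac{\gamma Z_q(b)-\phi(q+\gamma)}{(q+\gamma)Z_q(b,\Phi_{q+\gamma})}+\frac{\lambda}{Z_q(b,\Phi_{q+\gamma})}\,\mathcal{B},
\]
where $\mathcal{B}$ gathers the two remaining $\omega'_+$-integrals, with the kernel $W_q(b+y)+\gamma\int_0^yW_q(b+y-z)W_{q+\gamma}(z)\mathrm{d}z$ and the $Z_q(b,\Phi_{q+\gamma})$-weighted kernel $W_q(y)+\gamma\int_0^yW_q(y-z)W_{q+\gamma}(z)\mathrm{d}z$.

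Next I would convert the occupation functional into a potential-measure integral. Since $e_q$ is an independent exponential clock of rate $q$, one has $\mathrm{E}_b[\int_0^{\overline{\kappa}_0^-}e^{-qt}\omega'_+(U_t^b)\mathrm{d}t]=\frac1q\int_{(0,\infty)}\omega'_+(y)\,\mathrm{P}_b(U_{e_q}^b\in\mathrm{d}y;\,e_q<\overline{\kappa}_0^-)$. Setting $x=b$ in \eqref{U.b} simplifies it drastically: the block supported on $(b,\infty)$ in the first two lines cancels, the term $-qW_q(y-b)\mathbf{1}_{(0,b)}$ vanishes, and the prefactor $\frac{\gamma Z_q(0)+qZ_q(0,\Phi_{q+\gamma})}{\gamma Z_q(b)+qZ_q(b,\Phi_{q+\gamma})}$ reduces to $\mathrm{E}_b[e^{-q\overline{\kappa}_0^-}]$ by \eqref{stop.time.b}. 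Hence $\mathrm{E}_b[\int_0^{\overline{\kappa}_0^-}e^{-qt}\omega'_+(U_t^b)\mathrm{d}t]=\mathrm{E}_b[e^{-q\overline{\kappa}_0^-}]\,\widetilde{\mathcal{B}}$ for an explicit bracket $\widetilde{\mathcal{B}}$ written through $W_q$, $W_{q+\gamma}$ and $Z_q(b,\Phi_{q+\gamma})$. I would also record the elementary identity $\mathrm{E}_b[\int_0^{\overline{\kappa}_0^-}qe^{-qt}\phi\,\mathrm{d}t]=\phi(1-\mathrm{E}_b[e^{-q\overline{\kappa}_0^-}])$.

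The crux is to prove $\mathcal{B}=\widetilde{\mathcal{B}}$, which rests on the scale-function convolution identity \eqref{3.17} taken with $p=q+\gamma$. The two parameter choices ``$x$''$=y$, ``$y$''$=b$ and ``$x$''$=y$, ``$y$''$=0$ in \eqref{3.17}, together with a convolution change of variables, yield respectively
\[
W_q(b+y)+\gamma\int_0^yW_q(b+y-z)W_{q+\gamma}(z)\mathrm{d}z=W_{q+\gamma}(b+y)-\gamma\int_0^bW_q(b-z)W_{q+\gamma}(y+z)\mathrm{d}z,
\]
and $W_q(y)+\gamma\int_0^yW_q(y-z)W_{q+\gamma}(z)\mathrm{d}z=W_{q+\gamma}(y)$. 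Feeding these into $\mathcal{B}$ transforms it termwise into $\widetilde{\mathcal{B}}$. I expect this matching to be the main technical obstacle: not conceptually deep, but requiring that the correct roles of the two arguments in \eqref{3.17} be identified so the brackets align exactly, including the $Z_q(b,\Phi_{q+\gamma})$-weighted piece.

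Finally I would assemble the identity. From $\mathcal{B}=\widetilde{\mathcal{B}}$ we get $\frac{\lambda\mathcal{B}}{Z_q(b,\Phi_{q+\gamma})}=\frac{\lambda\mathrm{E}_b[\int_0^{\overline{\kappa}_0^-}e^{-qt}\omega'_+(U_t^b)\mathrm{d}t]}{\mathrm{E}_b[e^{-q\overline{\kappa}_0^-}]\,Z_q(b,\Phi_{q+\gamma})}$, so the $\lambda\omega'_+$-contribution already coincides with the corresponding term on the right-hand side of the claim. Substituting $\mathrm{E}_b[\int_0^{\overline{\kappa}_0^-}e^{-qt}(q\phi-\lambda\omega'_+(U_t^b))\mathrm{d}t]=\phi(1-\mathrm{E}_b[e^{-q\overline{\kappa}_0^-}])-\lambda\mathrm{E}_b[\int_0^{\overline{\kappa}_0^-}e^{-qt}\omega'_+(U_t^b)\mathrm{d}t]$, the two $\lambda\omega'_+$-terms cancel and it only remains to verify the purely deterministic identity
\[
\frac{\gamma}{q+\gamma}-\frac{\gamma Z_q(b)-\phi(q+\gamma)}{(q+\gamma)Z_q(b,\Phi_{q+\gamma})}=\frac{\phi\,\mathrm{E}_b[e^{-q\overline{\kappa}_0^-}]-1}{\mathrm{E}_b[e^{-q\overline{\kappa}_0^-}]\,Z_q(b,\Phi_{q+\gamma})}+1,
\]
which follows by inserting $\mathrm{E}_b[e^{-q\overline{\kappa}_0^-}]=(q+\gamma)/(\gamma Z_q(b)+qZ_q(b,\Phi_{q+\gamma}))$ from \eqref{stop.time.b} and a short simplification. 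Combining these steps delivers the stated formula for $V_{0,b}^{\omega\prime}(b)$.
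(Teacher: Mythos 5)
Your proposal is correct and follows essentially the same route as the paper: evaluate the closed-form derivative at $x=b$, identify the $\omega'_+$-integrals with $\lambda\mathrm{E}_b[\int_0^{\overline{\kappa}_0^-}e^{-qt}\omega'_+(U_t^b)\mathrm{d}t]/\mathrm{E}_b[e^{-q\overline{\kappa}_0^-}]$ via Lemma \ref{lem.pm} at $x=b$ and the convolution identity \eqref{3.17}, and match the remaining deterministic terms using \eqref{stop.time.b}. The paper compresses all of this into a single chain of equalities, so your write-up merely supplies the intermediate verifications it leaves implicit.
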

\begin{proof}
By Lemmas \ref{lem.pm}-\ref{lem.stop}, one can verify that
\begin{eqnarray}\label{par.b.2}
V_{0,b}^{\omega\prime}(b)
\hspace{-0.3cm}&=&\hspace{-0.3cm}
\frac{\gamma}{q+\gamma}-\frac{\gamma Z_q(b)-\phi(q+\gamma)}{(q+\gamma)Z_q(b,\Phi_{q+\gamma})}
\nonumber\\
\hspace{-0.3cm}&&\hspace{-0.3cm}
+\frac{1}{Z_q(b,\Phi_{q+\gamma})}\bigg[\lambda\int_{0+}^bW_q(y)\omega_+^{\prime}(y)\mathrm{d}y+\lambda\int_0^{\infty}\omega_+^{\prime}(b+y)\bigg[W_q(b+y)
\nonumber\\
\hspace{-0.3cm}&&\hspace{-0.3cm}
+\gamma\int_0^yW_q(b+y-z)W_{q+\gamma}(z)\mathrm{d}z-{Z_q(b,\Phi_{q+\gamma})}W_{q+\gamma}(y)\mathrm{d}y\bigg]\bigg]
\nonumber\\
\hspace{-0.3cm}&=&\hspace{-0.3cm}
\frac{\lambda\mathrm{E}_b\left[\int_0^{\overline{\kappa}_0^-}e^{-qt}\omega_{+}^{\prime}(U_t^b)\mathrm{d}t\right]+\phi\mathrm{E}_b\Big[e^{-q\overline{\kappa}_0^-}\Big]-1}{\mathrm{E}_b\Big[e^{-q\overline{\kappa}_0^-}\Big]Z_q(b,\Phi_{q+\gamma})}+1
\nonumber\\
\hspace{-0.3cm}&=&\hspace{-0.3cm}
\frac{\phi-1-\mathrm{E}_b\Big[\int_0^{\overline{\kappa}_0^-}e^{-qt}(q\phi-\lambda\omega_+^{\prime}(U_t^b))\mathrm{d}t\Big]}{\mathrm{E}_b\Big[e^{-q\overline{\kappa}_0^-}\Big]Z_q(b,\Phi_{q+\gamma})}+1,\nonumber
\end{eqnarray}
which is the desired result.
\end{proof}

Thanks to Lemma \ref{LTkappa0-}, we are able to define, in the following Lemma \ref{lem.b.w}, the dividend barrier level $b^{\omega}$ of the candidate optimal periodic dividend and capital injection strategy among the set of double barrier strategies.

\begin{lem}\label{lem.b.w}
Let us denote $b^{\omega}$ by
\begin{eqnarray}
b^{\omega}:=\inf\left\{b\geq0:\phi-1-\mathrm{E}_b\Big[\int_0^{\overline{\kappa}_0^-}e^{-qt}(q\phi-\lambda\omega_+^{\prime}(U_t^b))\mathrm{d}t\Big]\leq0\right\}.\nonumber
\end{eqnarray}
Then $b^{\omega}>0$ exists and is unique.
\end{lem}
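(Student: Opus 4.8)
The plan is to recast the statement as a single-crossing property of one scalar function. Writing $k(u):=q\phi-\lambda\omega_{+}^{\prime}(u)$ and $f(b):=\mathrm{E}_b\big[\int_0^{\overline{\kappa}_0^-}e^{-qt}k(U_t^b)\,\mathrm{d}t\big]$, the defining inequality is exactly $H(b):=\phi-1-f(b)\le 0$, so $b^{\omega}=\inf\{b\ge0:H(b)\le0\}$. I would prove that $f$ is continuous and \emph{strictly increasing} on $[0,\infty)$ with $f(0)=0$ and $\lim_{b\to\infty}f(b)>\phi-1$; then $H$ is continuous and strictly decreasing with $H(0)=\phi-1>0>\lim_{b\to\infty}H(b)$, so $H$ has a unique zero, which is the infimum $b^{\omega}\in(0,\infty)$, and $\{b:H(b)\le0\}=[b^{\omega},\infty)$. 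The whole argument rests on two sign facts: since $\omega$ is concave, $\omega_{+}^{\prime}$ is non-increasing, so $k$ is non-decreasing; and since $\omega_{+}^{\prime}(0+)\le\phi$ and $q=\delta+\lambda$ with $\delta>0$, one has $k(u)\ge q\phi-\lambda\phi=\delta\phi>0$. Thus raising the surplus both lengthens the time to ruin and raises the running reward $k$.

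For monotonicity I would couple $U^{b_1}$ and $U^{b_2}$ with $b_1<b_2$ on one space, driven by the same Lévy path $X$ and the same Poisson observation times, each started at its own barrier. The standard comparison for barrier-controlled spectrally positive processes gives $U_t^{b_1}\le U_t^{b_2}$ for all $t$, hence $\overline{\kappa}_0^-(b_1)\le\overline{\kappa}_0^-(b_2)$. Because $k\ge0$ is non-decreasing, $\int_0^{\overline{\kappa}_0^-(b_1)}e^{-qt}k(U_t^{b_1})\,\mathrm{d}t\le\int_0^{\overline{\kappa}_0^-(b_1)}e^{-qt}k(U_t^{b_2})\,\mathrm{d}t\le\int_0^{\overline{\kappa}_0^-(b_2)}e^{-qt}k(U_t^{b_2})\,\mathrm{d}t$, and taking expectations yields $f(b_1)\le f(b_2)$. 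Strictness comes from the positive-probability event that no Poisson observation occurs before $U^{b_1}$ creeps below $0$ (ruin is by creeping, as $X$ has no downward jumps): there the gap stays frozen at $b_2-b_1$, so $\overline{\kappa}_0^-(b_2)>\overline{\kappa}_0^-(b_1)$, whence $f(b_2)-f(b_1)\ge\delta\phi\,\mathrm{E}\big[\int_{\overline{\kappa}_0^-(b_1)}^{\overline{\kappa}_0^-(b_2)}e^{-qt}\,\mathrm{d}t\big]>0$. Continuity of $f$ follows from the explicit potential measure and Laplace transform of Lemmas \ref{lem.pm}--\ref{lem.stop} (scale functions are continuous), or directly from the same coupling estimate as $b_2\downarrow b_1$.

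For the endpoint values, at $b=0$ the spectrally positive, non-monotone process started at $0$ has $0$ regular for $(-\infty,0)$, so $\overline{\kappa}_0^-=0$ and $f(0)=0$, giving $H(0)=\phi-1>0$. For the limit I would use $q\,\mathrm{E}_b\big[\int_0^{\overline{\kappa}_0^-}e^{-qt}\mathrm{d}t\big]=1-\mathrm{E}_b[e^{-q\overline{\kappa}_0^-}]$ to write $f(b)=\phi\big(1-\mathrm{E}_b[e^{-q\overline{\kappa}_0^-}]\big)-\lambda\,\mathrm{E}_b\big[\int_0^{\overline{\kappa}_0^-}e^{-qt}\omega_{+}^{\prime}(U_t^b)\,\mathrm{d}t\big]$. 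By Lemma \ref{lem.stop}, $\mathrm{E}_b[e^{-q\overline{\kappa}_0^-}]=(q+\gamma)/(\gamma Z_q(b)+qZ_q(b,\Phi_{q+\gamma}))\to0$ as $b\to\infty$, while the explicit measure of Lemma \ref{lem.pm} (or dominated convergence, using that $U_t^b\to\infty$ for fixed $t$ and the bound $\omega_{+}^{\prime}(0+)e^{-qt}$) gives $\mathrm{E}_b\big[\int_0^{\overline{\kappa}_0^-}e^{-qt}\omega_{+}^{\prime}(U_t^b)\,\mathrm{d}t\big]\to\omega_{+}^{\prime}(\infty)/q$. Hence $\lim_{b\to\infty}f(b)=\phi-\lambda\omega_{+}^{\prime}(\infty)/q$, and since $\omega_{+}^{\prime}(\infty)\le1$ and $\delta>0$ force $\lambda\omega_{+}^{\prime}(\infty)<q$, this limit exceeds $\phi-1$, i.e. $\lim_{b\to\infty}H(b)=-1+\lambda\omega_{+}^{\prime}(\infty)/q<0$.

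Combining these facts, $H$ is continuous and strictly decreasing from $H(0)=\phi-1>0$ to a negative limit, so it has a unique zero $b^{\omega}\in(0,\infty)$ that coincides with the infimum defining $b^{\omega}$, and strict monotonicity forces $\{b:H(b)\le0\}=[b^{\omega},\infty)$; positivity is immediate from $H(0)=\phi-1>0$. I expect the strict-monotonicity step to be the main obstacle: it is true precisely because $k=q\phi-\lambda\omega_{+}^{\prime}$ is both non-negative and non-decreasing, so the two competing effects of raising $b$ (a longer survival window versus the changed value of the running reward $\omega_{+}^{\prime}(U_t^b)$) reinforce rather than cancel. This alignment is exactly where concavity of $\omega$, the constraint $\omega_{+}^{\prime}(0+)\le\phi$, and $\delta=q-\lambda>0$ enter; were any of them dropped, $k$ could be sign-changing or decreasing and the single-crossing property — and with it the uniqueness of $b^{\omega}$ — would fail.
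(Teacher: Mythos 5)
Your proposal is correct and follows essentially the same route as the paper: the map $b\mapsto\phi-1-\mathrm{E}_b\big[\int_0^{\overline{\kappa}_0^-}e^{-qt}(q\phi-\lambda\omega_+^{\prime}(U_t^b))\,\mathrm{d}t\big]$ is shown to be non-increasing by combining the monotonicity of $U^b$ and $\overline{\kappa}_0^-$ in $b$ with the concavity of $\omega$ and the positivity of $q\phi-\lambda\omega_+^{\prime}$, and this is paired with the limits $\phi-1>0$ as $b\downarrow0$ (via $\overline{\kappa}_0^-\to0$) and $\frac{\lambda}{q}\omega_+^{\prime}(\infty)-1<0$ as $b\uparrow\infty$ (via spatial homogeneity and dominated convergence), exactly as in the paper. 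The extra strict-monotonicity and continuity steps you add are not needed for the statement (uniqueness is automatic since $b^{\omega}$ is an infimum), but they are sound and merely make explicit what the paper leaves implicit.
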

\begin{proof}
We recall the assumption that the payoff function $\omega$ is continuous and concave over $[0, \infty)$ with
$\omega^{\prime}_{+}(0+)\leq \phi$ and $\omega^{\prime}_{+}(\infty)\in[0,1]$. Under this assumption, we have that $b\mapsto q\phi-\lambda\omega_+^{\prime}(b)$ is non-decreasing. By definition, we know that $\overline{\kappa}_0^-$ is non-decreasing with respect to the starting value $b$ of the process $U_t^b$, which combined with the concavity of $\omega$ results in the fact that the function
\begin{eqnarray}
\label{3.49}
b\mapsto\phi-1-\mathrm{E}_b\left[\int_0^{\overline{\kappa}_0^-}e^{-qt}(q\phi-\lambda\omega_+^{\prime}(U_t^b))\mathrm{d}t\right]
\end{eqnarray}
is non-increasing. In addition, due to spatial homogeneity of L\'evy processes and dominated convergence theorem, it can be verified that
\begin{eqnarray}
\label{3.50}
\hspace{-0.3cm}&&\hspace{-0.3cm}
\lim_{b\rightarrow\infty}\left[\phi-1-\mathrm{E}_b\left[\int_0^{\overline{\kappa}_0^-}e^{-qt}(q\phi-\lambda\omega_+^{\prime}(U_t^b))\mathrm{d}t\right]\right]
\nonumber\\
\hspace{-0.3cm}&=&\hspace{-0.3cm}
\phi-1-\lim_{b\rightarrow\infty}\mathrm{E}_0\left[\int_0^{\infty}e^{-qt}\mathbf{1}_{\{t\leq\overline{\kappa}_{-b}^-\}}(q\phi-\lambda\omega_+^{\prime}(b+U_t^0))\mathrm{d}t\right]
\nonumber\\
\hspace{-0.3cm}&=&\hspace{-0.3cm}
\frac{\lambda}{q}\omega_+^{\prime}(\infty)-1<0.
\end{eqnarray}
Furthermore, by \eqref{stop.time.b}, we know that $\lim\limits_{b\downarrow 0}\overline{\kappa}_0^-=0$ almost surely, which together with the dominated convergence theorem gives
\begin{eqnarray}
\label{3.51}
\hspace{-0.3cm}&&\hspace{-0.3cm}
\lim_{b\rightarrow0}\left[\phi-1-\mathrm{E}_b\left[\int_0^{\overline{\kappa}_0^-}e^{-qt}(q\phi-\lambda\omega_+^{\prime}(U_t^b))\mathrm{d}t\right]\right]
=\phi-1>0.
\end{eqnarray}
Piecing together \eqref{3.50}, \eqref{3.51} as well as the non-increasing property of the function given by \eqref{3.49} yields the desired result. The proof is complete.
\end{proof}

With the candidate optimal dividend barrier defined in Lemma \ref{lem.b.w}, we can now investigate, in the upcoming Lemma \ref{V.bw.parl}, the analytical properties, especially the slope conditions, of the value function $V_{0,b^{\omega}}^{\omega}(x)$ of the double barrier periodic dividend and capital injection strategy with dividend barrier $b^{\omega}$ and capital injection barrier $0$.

\begin{lem}\label{V.bw.parl}
The value function $V_{0,b^{\omega}}^{\omega}(x)$ is increasing and concave over $(-\infty,\infty)$. In addition, we have $1\leq V_{0,b^{\omega}}^{\omega\prime}(x)\leq \phi$ for $x\in(0,b^{\omega})$, and $0\leq V_{0,b^{\omega}}^{\omega\prime}(x)\leq1$ for $x\in[b^{\omega},\infty)$.
\end{lem}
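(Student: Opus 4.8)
The plan is to let concavity carry almost all the weight: once $V_{0,b^{\omega}}^{\omega}$ is shown to be concave on $\mathbb{R}$ and its derivative is pinned down at the two anchor points $x=0$ and $x=b^{\omega}$, every asserted inequality is read off from the non-increasing derivative between these anchors. The ingredients I would combine are the regularity already recorded in Lemma \ref{lem.par.b} (namely $V_{0,b^{\omega}}^{\omega}\in C^1(\mathbb{R})$, and $C^2((0,\infty))$ when $X$ has unbounded variation), the two boundary slopes, and the concavity itself.

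First I would fix the two anchor slopes. Lemma \ref{lem.par.b} already gives $V_{0,b^{\omega}}^{\omega\prime}(0+)=\phi$, and on $(-\infty,0)$ the value function is the affine map $x\mapsto \phi x+V_{0,b^{\omega}}^{\omega}(0)$, so its derivative equals $\phi$ on all of $(-\infty,0]$. For the slope at the dividend barrier I would appeal to Lemma \ref{LTkappa0-}: writing $H(b):=\phi-1-\mathrm{E}_b[\int_0^{\overline{\kappa}_0^-}e^{-qt}(q\phi-\lambda\omega_+^{\prime}(U_t^b))\mathrm{d}t]$ for the numerator there, Lemma \ref{lem.b.w} shows that $H$ is continuous and non-increasing with $H(0+)=\phi-1>0$ and a strictly negative limit at $\infty$, so $b^{\omega}$ is exactly its unique root. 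Hence $H(b^{\omega})=0$, and since the denominator $\mathrm{E}_{b^{\omega}}[e^{-q\overline{\kappa}_0^-}]Z_q(b^{\omega},\Phi_{q+\gamma})$ is strictly positive, Lemma \ref{LTkappa0-} yields $V_{0,b^{\omega}}^{\omega\prime}(b^{\omega})=1$.

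The crux is concavity, and this is where I expect the real work to lie. In the unbounded-variation case $V_{0,b^{\omega}}^{\omega}\in C^2((0,\infty))$, and I would verify $V_{0,b^{\omega}}^{\omega\prime\prime}\le 0$ on $(0,b^{\omega})$ and on $(b^{\omega},\infty)$ directly from \eqref{Vx.pp.1} and \eqref{Vx.pp.2} evaluated at $b=b^{\omega}$. The sign bookkeeping would exploit: the concavity of $\omega$, so that $\omega_+^{\prime}$ is non-increasing and nonnegative; the elementary positivity facts $W_q\ge 0$, $W_q^{\prime+}\ge 0$, $Z_q(\cdot,\Phi_{q+\gamma})>0$, together with $\Phi_{q+\gamma}Z_q(b-x,\Phi_{q+\gamma})-\gamma W_q(b-x)=\partial_uZ_q(u,\Phi_{q+\gamma})\big|_{u=b-x}>0$; and the defining relation $H(b^{\omega})=0$, which controls the sign of the coefficient $\gamma Z_q(b^{\omega})-\phi(q+\gamma)$ and is precisely what makes the two branches of $V''$ fit together into a globally non-positive function. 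For the bounded-variation case $V_{0,b^{\omega}}^{\omega}$ is only $C^1$ and $W_q$ need not be differentiable, so rather than differentiating twice I would approximate $X$ by the spectrally positive unbounded-variation processes $X+\tfrac1n B$, with $B$ an independent Brownian motion, whose value functions (at their own barriers $b^{\omega,(n)}$ furnished by Lemma \ref{lem.b.w}) are concave by the previous step; using continuity of the scale functions in the Lévy triplet, of the root $b^{\omega,(n)}\to b^{\omega}$, and of the value-function formula in Lemma \ref{V.x}, I would pass to the limit and invoke that a pointwise limit of concave functions is concave. Splicing the result with the affine slope-$\phi$ branch on $(-\infty,0)$, whose slope matches $V_{0,b^{\omega}}^{\omega\prime}(0+)=\phi$, gives concavity on all of $\mathbb{R}$.

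With concavity in hand the derivative $V_{0,b^{\omega}}^{\omega\prime}$ is non-increasing on $\mathbb{R}$. On $(0,b^{\omega})$ this gives $1=V_{0,b^{\omega}}^{\omega\prime}(b^{\omega})\le V_{0,b^{\omega}}^{\omega\prime}(x)\le V_{0,b^{\omega}}^{\omega\prime}(0+)=\phi$, which is the first slope bound. On $(b^{\omega},\infty)$ monotonicity of the derivative already gives $V_{0,b^{\omega}}^{\omega\prime}(x)\le V_{0,b^{\omega}}^{\omega\prime}(b^{\omega})=1$; for the lower bound I would compute $\lim_{x\to\infty}V_{0,b^{\omega}}^{\omega\prime}(x)=\tfrac{\lambda}{q}\omega_+^{\prime}(\infty)\in[0,1)$ from \eqref{Vx.p.2} by a dominated-convergence argument paralleling the one behind \eqref{3.50}, whence the non-increasing derivative stays $\ge\tfrac{\lambda}{q}\omega_+^{\prime}(\infty)\ge 0$ and $0\le V_{0,b^{\omega}}^{\omega\prime}(x)\le 1$ there. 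Finally, since $V_{0,b^{\omega}}^{\omega\prime}\ge 1$ on $(-\infty,b^{\omega}]$ and $\ge 0$ on $[b^{\omega},\infty)$, the function $V_{0,b^{\omega}}^{\omega}$ is non-decreasing, which together with the established concavity completes the proof.
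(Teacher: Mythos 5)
Your skeleton (fix the slopes $\phi$ at $0$ and $1$ at $b^{\omega}$, prove concavity, read everything else off the non-increasing derivative) matches the logical structure of the paper, and your derivation of the two anchor slopes is correct. But the load-bearing step — concavity — is only asserted, not proved, and the route you propose for it does not obviously close. You say you would check $V_{0,b^{\omega}}^{\omega\prime\prime}\le 0$ directly from \eqref{Vx.pp.1}--\eqref{Vx.pp.2} by ``sign bookkeeping,'' but those expressions are sums of terms of both signs (in particular the coefficient $\gamma Z_q(b^{\omega})-\phi(q+\gamma)$ has no a priori sign, and the $\omega$-dependent bracket mixes positive and negative contributions), and I see no way to conclude non-positivity from them term by term; the cancellation that makes $V^{\prime\prime}\le 0$ true is exactly what a pointwise sign analysis fails to expose. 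Your fallback for the bounded-variation case — perturbing by $\tfrac1n B$ and passing to the limit — would additionally require convergence of the scale functions, of the barriers $b^{\omega,(n)}$, and of the value-function formulas in the L\'evy triplet, none of which you establish; so as written the proof of the lemma's main claim is missing.

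The paper avoids all of this by pushing the identity $V_{0,b^{\omega}}^{\omega\prime}(b^{\omega})=1$ back into the formula \eqref{Vx.p.1} for the derivative at a general point and recognizing, via Lemmas \ref{lem.pm} and \ref{lem.stop}, the resulting expression as
\begin{align}
V_{0,b^{\omega}}^{\omega\prime}(x)
=\phi\,\mathrm{E}_x\Big[e^{-q\overline{\kappa}_0^-}\Big]
+\lambda\,\mathrm{E}_x\Big[\int_0^{\overline{\kappa}_0^-}e^{-qt}\omega^{\prime}_+(U_t^{b^{\omega}})\,\mathrm{d}t\Big]
=\phi-\mathrm{E}_x\Big[\int_0^{\overline{\kappa}_0^-}e^{-qt}\big(q\phi-\lambda\omega^{\prime}_+(U_t^{b^{\omega}})\big)\,\mathrm{d}t\Big],
\nonumber
\end{align}
valid for all $x\in(0,\infty)$ and in both the bounded- and unbounded-variation cases. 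From this single representation everything follows at once: $q\phi-\lambda\omega^{\prime}_+\ge q\phi-\lambda\phi\ge 0$ and the pathwise monotonicity of $U^{b^{\omega}}$ and $\overline{\kappa}_0^-$ in the starting point make the subtracted expectation non-decreasing in $x$, so $V_{0,b^{\omega}}^{\omega\prime}$ is non-increasing (concavity, with no second derivative needed); the first form shows $V_{0,b^{\omega}}^{\omega\prime}\ge 0$ since $\omega^{\prime}_+\ge\omega^{\prime}_+(\infty)\ge 0$; the second form shows $V_{0,b^{\omega}}^{\omega\prime}\le\phi$; and $V_{0,b^{\omega}}^{\omega\prime}(b^{\omega})=1$ splits the two ranges. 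If you want to salvage your write-up, the fix is to carry out exactly this substitution of \eqref{V.bw.par.b} into \eqref{v.bw.par.x} rather than attempting the direct second-derivative estimate or the Brownian perturbation.
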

\begin{proof}
By Lemma \ref{lem.b.w} and the proof of Lemma \ref{lem.par.b}, one finds that
\begin{eqnarray}\label{v.bw.par.x}
\hspace{-0.8cm}V_{0,b^{\omega}}^{\omega\prime}(x)
\hspace{-0.3cm}&=&\hspace{-0.3cm}
\frac{\gamma}{q+\gamma}Z_q(b^{\omega}-x)-\frac{\gamma Z_q(b^{\omega})-\phi(q+\gamma)}{(q+\gamma)Z_q(b^{\omega},\Phi_{q+\gamma})}Z_q(b^{\omega}-x,\Phi_{q+\gamma})
\nonumber\\
\hspace{-0.3cm}&&\hspace{-0.3cm}
-\lambda\int_{0+}^bW_q(y-x)\omega^{\prime}_+(y)\mathrm{d}y+\frac{Z_q(b^{\omega}-x,\Phi_{q+\gamma})}{Z_q(b^{\omega},\Phi_{q+\gamma})}\bigg[\lambda\int_{0+}^{b^{\omega}}W_q(y)\omega^{\prime}_+(y)\mathrm{d}y
\nonumber\\
\hspace{-0.3cm}&&\hspace{-0.3cm}
+\lambda\int_0^{\infty}\omega^{\prime}_+(b^{\omega}+y)\Big[\Big(W_{q}(b^{\omega}+y)+\gamma\int_0^yW_q(b^{\omega}+y-z)W_{q+\gamma}(z)\mathrm{d}z\Big)-\frac{Z_q(b^{\omega},\Phi_{q+\gamma})}{Z_q(b^{\omega}-x,\Phi_{q+\gamma})}
\nonumber\\
\hspace{-0.3cm}&&\hspace{-0.3cm}
\times\Big(W_{q}(b^{\omega}-x+y)+\gamma\int_0^yW_q(b^{\omega}-x+y-z)W_{q+\gamma}(z)\mathrm{d}z\Big)\Big]\mathrm{d}y
\bigg], \quad x\in(0,\infty),
\end{eqnarray}
and
\begin{eqnarray}\label{V.bw.par.b}
1=V_{0,b^{\omega}}^{\omega\prime}(b^{\omega})
\hspace{-0.3cm}&=&\hspace{-0.3cm}
\frac{\gamma}{q+\gamma}-\frac{\gamma Z_q(b^{\omega})-\phi(q+\gamma)}{(q+\gamma)Z_q(b^{\omega},\Phi_{q+\gamma})}
\nonumber\\
\hspace{-0.3cm}&&\hspace{-0.3cm}
+\frac{1}{Z_q(b^{\omega},\Phi_{q+\gamma})}\bigg[\lambda\int_{0+}^{b^{\omega}}W_q(y)\omega_+^{\prime}(y)\mathrm{d}y+\lambda\int_0^{\infty}\omega_+^{\prime}(b^{\omega}+y)\bigg[W_q(b^{\omega}+y)
\nonumber\\
\hspace{-0.3cm}&&\hspace{-0.3cm}
+\gamma\int_0^yW_q(b^{\omega}+y-z)W_{q+\gamma}(z)\mathrm{d}z-{Z_q(b^{\omega},\Phi_{q+\gamma})}W_{q+\gamma}(y)\mathrm{d}y\bigg]\bigg].
\end{eqnarray}
Plugging (\ref{V.bw.par.b}) into (\ref{v.bw.par.x}) and then rearranging the yielding equation gives
\begin{eqnarray}\label{v.bw.pr.x2}
V_{0,b^{\omega}}^{\omega\prime}(x)
\hspace{-0.3cm}&=&\hspace{-0.3cm}
\frac{\gamma Z_q(b^{\omega}-x)+qZ_q(b^{\omega}-x,\Phi_{q+\gamma})}{q+\gamma}-\lambda\int_{0+}^{b^{\omega}}W_q(y-x)\omega_+^{\prime}(y)\mathrm{d}y
\nonumber\\
\hspace{-0.3cm}&&\hspace{-0.3cm}
+\lambda\int_0^{\infty}\omega^{\prime}_+(b^{\omega}+y)\Big(W_{q+\gamma}(y)Z_q(b^{\omega}-x,\Phi_{q+\gamma})-W_{q+\gamma}(y-x+b^{\omega})
\nonumber\\
\hspace{-0.3cm}&&\hspace{-0.3cm}
-\gamma\int_0^yW_q(y-z-x+b^{\omega})W_{q+\gamma}(z)\mathrm{d}z\Big)\mathrm{d}y
\nonumber\\
\hspace{-0.3cm}&=&\hspace{-0.3cm}
\phi\mathrm{E}_x\Big[e^{-q\overline{\kappa}_0^-}\Big]+\lambda\mathrm{E}_x\Big[\int_0^{\overline{\kappa}_0^-}e^{-qt}\omega^{\prime}_+(U_t^{b^{\omega}})\mathrm{d}t\Big]
\nonumber\\
\hspace{-0.3cm}&=&\hspace{-0.3cm}
\phi-\mathrm{E}_x\Big[\int_0^{\overline{\kappa}_0^-}e^{-qt}\big(q\phi-\lambda\omega^{\prime}_+(U_t^{b^{\omega}})\big)\mathrm{d}t\Big].\nonumber
\end{eqnarray}
Recall that the payoff function $\omega$ is continuous and concave over $[0, \infty)$ with
$\omega^{\prime}_{+}(0+)\leq \phi$ and $\omega^{\prime}_{+}(\infty)\in[0,1]$. It follows that $V_{0,b^{\omega}}^{\omega\prime}(0)\leq\phi$ and $V_{0,b^{\omega}}^{\omega\prime}$ is no-increasing on $(-\infty,\infty)$. Furthermore, due to the fact that $V_{0,b^{\omega}}^{\omega\prime}(b^{\omega})=1$, we derive that $1\leq V_{0,b^{\omega}}^{\omega\prime}(x)\leq \phi$ for $x\in(0,b^{\omega})$, and $0\leq V_{0,b^{\omega}}^{\omega\prime}(x)\leq1$ for $x\in[b^{\omega},\infty)$. The proof is complete.
\end{proof}

Thanks to the slope conditions provided in Lemma \ref{V.bw.parl},
we are now to confirm our conjecture that the double barrier periodic dividend and capital injection strategy with dividend barrier $b^{\omega}$ and capital injection barrier $0$ is the optimal strategy of the auxiliary control problem \eqref{valefunctionofpi}. To this purpose, we need the following Lemma \ref{lem.V.L} for use of verification.
To begin with, for any function $v$ that is sufficiently differentiable (i.e., $v$ is once (resp., twice) continuously differentiable when $X$ has paths of bounded (resp., unbounded) variation), let us define an operator $\mathcal{A}$ on $v$ that
$$\mathcal{A}v(x):=\frac{1}{2}\sigma^2v^{\prime\prime}(x)-cv^{\prime}(x)+\int_{(0,\infty)}\left(v(x+y)-v(x)-v^{\prime}(x)y\mathbf{1}_{(0,1)}(y)\right)\nu(\mathrm{d}y),$$where $x\in(-\infty,\infty)$.

\begin{lem}[Verification Lemma]\label{lem.V.L}
Suppose that
the function $v(x)$ is non-decreasing and continuously differentiable over $(-\infty,\infty)$. Suppose further that $v(x)$ is twice continuously differentiable over $(0,\infty)$ if $X$ has paths of unbounded variation. Additionally, suppose
\begin{eqnarray}
\label{HJB}
\max\{\left(\mathcal{A}-q\right)v(x){\color{black}+\lambda\omega(x)}+\gamma\max_{0\leq z\leq x}\left(z+v(x-z)-v(x)\right), v^{\prime}(x)-\phi\}\leq 0.
\end{eqnarray}
Then
$v(x)\geq  V_{\pi}^{\omega}(x)$ for all $x\in(-\infty,\infty)$ and all admissible periodic dividend and capital injection strategy $(D^{\pi},R^{\pi})\in\Pi$.
\end{lem}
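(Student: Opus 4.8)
The plan is a standard verification (supermartingale) argument. I would fix an arbitrary admissible strategy $\pi=(D^{\pi},R^{\pi})\in\Pi$ and an initial level $x\geq0$ (the case $x<0$ reduces to $x=0$ by first injecting $-x$ units of capital, using $v'\le\phi$ to compare $v(x)\ge v(0)+\phi x$ with the forced initial cost), and consider the discounted value-plus-reward process
\begin{equation}
Y_t:=e^{-qt}v(U_t^{\pi})+\sum_{n:\,T_n\le t}e^{-qT_n}\Delta D_{T_n}^{\pi}-\phi\int_0^t e^{-qs}\mathrm{d}R_s^{\pi}+\lambda\int_0^t e^{-qs}\omega(U_s^{\pi})\mathrm{d}s .\nonumber
\end{equation}
Since $D^{\pi}$ has no jump and $R^{\pi}$ no increase at time $0$ when $x\ge0$, we have $Y_0=v(x)$, so it suffices to prove that $Y$ is a local supermartingale and then to pass to the limit, as the accumulated reward in $Y_t$ is exactly what defines $V_{\pi}^{\omega}(x)$.

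First I would apply a change-of-variables formula to $e^{-qt}v(U_t^{\pi})$, treating the two path-variation regimes separately. If $X$ has bounded variation then $\sigma=0$ and $U^{\pi}=X-D^{\pi}+R^{\pi}$ is a finite-variation semimartingale, so the $C^1$-regularity of $v$ suffices; if $X$ has unbounded variation I would invoke the It\^o--Meyer formula together with the fact that $v$ is $C^2$ on $(0,\infty)$, using that the reflected-at-$0$ surplus spends zero Lebesgue time at the origin so that the $\mathrm{d}s$-integral only probes $(0,\infty)$. Recalling that $R^{\pi}$ is continuous by Lemma \ref{R.continu.} and that $X$ is a.s.\ continuous at the Poisson epochs $T_n$ (as $N$ is independent of $X$), either regime yields
\begin{align}
e^{-qt}v(U_t^{\pi})=v(x)&+\int_0^t e^{-qs}(\mathcal{A}-q)v(U_{s-}^{\pi})\mathrm{d}s+\int_0^t e^{-qs}v'(U_{s-}^{\pi})\mathrm{d}R_s^{\pi}\nonumber\\
&+\sum_{n:\,T_n\le t}e^{-qT_n}\big(v(U_{T_n-}^{\pi}-\Delta D_{T_n}^{\pi})-v(U_{T_n-}^{\pi})\big)+\widetilde{M}_t,\nonumber
\end{align}
where $\widetilde{M}$ is a local martingale coming from the Brownian part and the compensated jump measure of $X$.

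Second I would compute the drift of $Y$. Adding the reward terms and compensating the dividend jumps, whose arrival times form a Poisson process of rate $\gamma$, the $\mathrm{d}t$-drift of $Y_s$ equals
\begin{equation}
e^{-qs}\Big[(\mathcal{A}-q)v(U_{s-}^{\pi})+\lambda\omega(U_{s-}^{\pi})+\gamma\big(\Delta D_s^{\pi}+v(U_{s-}^{\pi}-\Delta D_s^{\pi})-v(U_{s-}^{\pi})\big)\Big]\nonumber
\end{equation}
together with the injection contribution $e^{-qs}\big(v'(U_{s-}^{\pi})-\phi\big)\mathrm{d}R_s^{\pi}$. By admissibility the scheduled lump sums satisfy $0\le\Delta D_s^{\pi}\le U_{s-}^{\pi}$, so the bracketed term is dominated by $(\mathcal{A}-q)v(U_{s-}^{\pi})+\lambda\omega(U_{s-}^{\pi})+\gamma\max_{0\le z\le U_{s-}^{\pi}}(z+v(U_{s-}^{\pi}-z)-v(U_{s-}^{\pi}))\le0$ by the first branch of the Hamilton--Jacobi--Bellman inequality \eqref{HJB}; the injection contribution is $\le0$ because $v'\le\phi$ (second branch of \eqref{HJB}) and $\mathrm{d}R^{\pi}\ge0$. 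Hence $Y$ has non-positive drift and is a local supermartingale, which along a localizing sequence $\tau_k\uparrow\infty$ gives $\mathrm{E}_x[Y_{t\wedge\tau_k}]\le v(x)$.

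The main obstacle is the limiting step. From $\mathrm{E}_x[Y_{t\wedge\tau_k}]\le v(x)$ one reads off $v(x)\ge\mathrm{E}_x\big[e^{-q(t\wedge\tau_k)}v(U_{t\wedge\tau_k}^{\pi})\big]+\mathrm{E}_x[\mathcal{R}_{t\wedge\tau_k}]$, where $\mathcal{R}$ denotes the accumulated reward. Since $v$ is non-decreasing and $U^{\pi}\ge0$, the boundary term is bounded below by $e^{-q(t\wedge\tau_k)}v(0)$, whose $\liminf$ is $\ge0$, so it may be discarded. It then remains to justify $\mathrm{E}_x[\mathcal{R}_{t\wedge\tau_k}]\to\mathrm{E}_x[\mathcal{R}_{\infty}]=V_{\pi}^{\omega}(x)$ as $k,t\to\infty$: the dividend part converges by monotone convergence, the capital-injection part is controlled by the admissibility condition $\int_0^{\infty}e^{-qs}\mathrm{d}R_s^{\pi}<\infty$, and the running-reward part is handled by dominated convergence after exploiting the at-most-linear growth of $v$ and $\omega$ (secured by $\omega_+'(\infty)\le1$) to dominate $e^{-qs}\omega(U_s^{\pi})$. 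Combining these yields $v(x)\ge V_{\pi}^{\omega}(x)$ for every admissible $\pi$. I expect the delicate points to be precisely the validity of the change-of-variables in the merely-$C^1$ bounded-variation case and securing the integrability needed to interchange limit and expectation in this last paragraph.
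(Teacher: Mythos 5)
Your proposal is correct and follows essentially the same route as the paper's proof: an It\^o/Meyer--It\^o expansion of $e^{-qt}v(U^{\pi}_t)$, compensation of the Poisson dividend jumps at rate $\gamma$, termwise domination via the two branches of \eqref{HJB}, localization, and a limiting argument discarding the non-negative terminal term. The only cosmetic differences are that the paper first restricts to continuous reflection-type $R^{\pi}$ via Lemma \ref{R.continu.} and localizes with $\widetilde{T}_m=m\wedge\inf\{t:U^{\pi}_t\geq m\}$, which you may adopt to settle the integrability issues you flag.
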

\begin{proof}
Let $\mathcal{D}$ be the set of admissible dividend and capital injection strategy $(D^{\pi}_{t},R^{\pi}_{t})_{t\geq0}$ with $R^{\pi}_{t}$ being continuous and of form \eqref{R.form}.
By Lemma \ref{R.continu.}, we only need to prove that $v(x)$ dominates the value function of any admissible dividend and capital injection strategies among $\mathcal{D}$.
For a given strategy $(D^{\pi},R^{\pi})\in\mathcal{D}$, recall that $U^{\pi}_{t}=X_{t}-\sum_{n=1}^{\infty}\Delta D^{\pi}_{T_n}\mathbf{1}_{\{T_n\leq t\}}+R^{\pi}_{t}$ for $t\geq0$.
We follow Theorem 2.1 in \cite{Kyprianou14} to denote $X_{t}$ as the sum of the independent processes $-c t+\sigma B_{t}$, $\sum_{s\leq t}\Delta X_{s}\mathbf{1}_{\{\Delta X_{s}\geq 1\}}$, and
$X_{t}+c t-\sigma B_{t}-\sum_{s\leq t}\Delta X_{s}\mathbf{1}_{\{\Delta X_{s}\geq 1\}}$,
with the latter one being a square integrable martingale.
Denote by $\{ {U}^{\pi,c}_{t};t\geq0\}$
as the continuous part of  $\{ {U}^{\pi}_{t};t\geq0\}$. By Theorem 4.57 (It\^{o}'s formula) in \cite{Jacod03}, we have, for $x\in(0,\infty)$,
{\color{black}\begin{eqnarray}\label{HJB.2}
\hspace{-0.3cm}&&\hspace{-0.3cm}
e^{-q t}v({U}^{\pi}_{t})-v(x)\nonumber\\
\hspace{-0.3cm}&=&\hspace{-0.3cm}
-\int_{0-}^{t}q e^{-q s}v( {U}^{\pi}_{s-})\mathrm{d}s+\int_{0-}^{t} e^{-q s}v^{\prime}( {U}^{\pi}_{s-})\mathrm{d} {U}^{\pi}_{s}\nonumber\\
\hspace{-0.3cm}&&\hspace{-0.3cm}
+\frac{1}{2} \int_{0-}^{t} e^{-q s}v^{\prime\prime}( {U}^{\pi}_{s-})\mathrm{d}\langle  {U}^{\pi,c}(\cdot), {U}^{\pi,c}(\cdot)\rangle_{s}
\nonumber\\
\hspace{-0.3cm}&&\hspace{-0.3cm}
+\sum_{s\leq t}e^{-q s}\big(v( {U}^{\pi}_{s-}+\Delta  {U}^{\pi}_{s})-v( {U}^{\pi}_{s-})-v^{\prime}( {U}^{\pi}_{s-})\Delta  {U}^{\pi}_{s}\big)
\nonumber\\
\hspace{-0.3cm}&=&\hspace{-0.3cm}
-\int_{0-}^{t}q e^{-q s}v({U}^{\pi}_{s-})\mathrm{d}s+\int_{0-}^{t} e^{-q s}v^{\prime}({U}^{\pi}_{s-})
\mathrm{d}(-c s+\sigma B_{s})\nonumber\\
\hspace{-0.3cm}&&\hspace{-0.3cm}
+\int_{0-}^{t} e^{-q s}v^{\prime}({U}^{\pi}_{s-})
\mathrm{d}\big(X_{s}+c s-\sigma B_{s}-\sum_{r\leq s}\Delta X_r\mathbf{1}_{\{\Delta X_r\geq 1\}}\big)
\nonumber\\
\hspace{-0.3cm}&&\hspace{-0.3cm}
+\int_{0-}^{t} e^{-q s}v^{\prime}( {U}^{\pi}_{s-})
\mathrm{d}R^{\pi}_{s}
+\frac{\sigma^{2}}{2} \int_{0-}^{t} e^{-q s}v^{\prime\prime}( {U}^{\pi}_{s-})\mathrm{d}s
\nonumber\\
\hspace{-0.3cm}&&\hspace{-0.3cm}
+\int_{0-}^{t} e^{-q s}v^{\prime}( {U}_{s-})
\mathrm{d}\big(\sum_{r\leq s}\Delta X_r\mathbf{1}_{\{\Delta X_r\geq 1\}}\big)
\nonumber\\
\hspace{-0.3cm}&&\hspace{-0.3cm}
+\sum_{s\leq t,\Delta D^{\pi}_s=0,\Delta X_s\neq0}e^{-q s}\big[v( {U}^{\pi}_{s-}+\Delta X_{s})-v({U}^{\pi}_{s-})-v^{\prime}( {U}^{\pi}_{s-})\Delta X_{s}\big]
\nonumber\\
\hspace{-0.3cm}&&\hspace{-0.3cm}
+\sum_{s\leq t,\Delta D_s^{\pi}\neq0}e^{-q s}\big[v( {U}^{\pi}_{s-}+\Delta X_s+\Delta D_s^{\pi})-v({U}^{\pi}_{s-}+\Delta X_{s})
\big]
\nonumber\\
\hspace{-0.3cm}&=&\hspace{-0.3cm}
-\int_{0-}^{t}q e^{-q s}v( {U}^{\pi}_{s-})\mathrm{d}s+\int_{0-}^{t} e^{-q s}v^{\prime}( {U}^{\pi}_{s-})
\mathrm{d}(-c s+\sigma B_{s})\nonumber\\
\hspace{-0.3cm}&&\hspace{-0.3cm}
+\int_{0-}^{t} e^{-q s}v^{\prime}( {U}^{\pi}_{s-})
\mathrm{d}\big(X_{s}+c s-\sigma B_{s}-\sum_{r\leq s}\Delta X_r\mathbf{1}_{\{\Delta X_r\geq 1\}}\big)
\nonumber\\
\hspace{-0.3cm}&&\hspace{-0.3cm}
+\int_{0-}^{t} e^{-q s}v^{\prime}( {U}^{\pi}_{s-})
\mathrm{d}R^{\pi}_{s}
+\frac{\sigma^{2}}{2} \int_{0-}^{t} e^{-q s}v^{\prime\prime}( {U}^{\pi}_{s-})\mathrm{d}s
\nonumber\\
\hspace{-0.3cm}&&\hspace{-0.3cm}
+\sum_{s\leq t}e^{-q s}\big[v( {U}^{\pi}_{s-}+\Delta X_{s})-v( U^{\pi}_{s-})
-v^{\prime}( {U}^{\pi}_{s-})\Delta X_{s}\mathbf{1}_{\{\Delta X_{s}<1\}}\big]
\nonumber\\
\hspace{-0.3cm}&&\hspace{-0.3cm}
+\int_{0-}^{t}e^{-qs}\Big[v(U_{s-}^{\pi}+\Delta X_s+\Delta D^{\pi}_{s})-v(U_{s-}^{\pi}+\Delta X_s)\Big]\mathrm{d}N_s,
\end{eqnarray}}\noindent
where $\Delta X_s=X_s-X_{s-}$, $\Delta D^{\pi}_s=\sum_{n=1}^{\infty}\Delta {D}^{\pi}_{T_n}\mathbf{1}_{\{T_n= s\}}$ and $\Delta{U}^{\pi}_s={U}^{\pi}_s-{U}^{\pi}_{s-}=\Delta X_s-\Delta D_s^{\pi}$.
Define a sequence of stopping times $(\widetilde{T}_m)_{m\geq1}$ that $$\widetilde{T}_m:=m\wedge\inf\{t\geq0;{U}^{\pi}_t\geq m\},\quad n\geq1.$$ It follows that $\widetilde{T}_m\rightarrow\infty$ almost surely as $n\rightarrow\infty$. In addition, ${U}_{t-}$ is confined in the compact set $[0,m]$ for $t\leq \widetilde{T}_m$. By (\ref{HJB})-(\ref{HJB.2}), we have
\begin{eqnarray}\label{HJB.3}
\hspace{-0.3cm}&&\hspace{-0.3cm}
e^{-q(t\wedge\widetilde{T}_m)}v({U}^{\pi}_{t\wedge\widetilde{T}_m})-v(x)
\nonumber\\
\hspace{-0.3cm}&=&\hspace{-0.3cm}
M_{t\wedge\widetilde{T}_m}+\int_{0-}^{t\wedge\widetilde{T}_m}e^{-qs}\Big((\mathcal{A}-q)v(U^{\pi}_{s-})+\lambda\omega(U^{\pi}_{s-})
\nonumber\\
\hspace{-0.3cm}&&\hspace{-0.3cm}
+\gamma\big[\Delta D^{\pi}_s+v(U_{s-}^{\pi}+\Delta X_s+\Delta D^{\pi}_{s})-v(U_{s-}^{\pi}+\Delta X_s)\big]\Big)\mathrm{d}s
\nonumber\\
\hspace{-0.3cm}&&\hspace{-0.3cm}
+\int_{0-}^{t\wedge\widetilde{T}_m}e^{-qs}v^{\prime}(U^{\pi}_{s-})\mathrm{d}R^{\pi}_s-\int_{0}^{t\wedge\widetilde{T}_m}e^{-qs}\lambda\omega(U^{\pi}_{s-})\mathrm{d}s-\int_{0}^{t\wedge\widetilde{T}_m}e^{-qs}\Delta D^{\pi}_s\mathrm{d}N_s
\nonumber\\
\hspace{-0.3cm}&\leq&\hspace{-0.3cm}
M_{t\wedge\widetilde{T}_m}+\phi\int_{0-}^{t\wedge\widetilde{T}_m}e^{-qs}\mathrm{d}R^{\pi}_s-\int_{0}^{t\wedge\widetilde{T}_m}e^{-qs}\lambda\omega(U^{\pi}_{s-})\mathrm{d}s-\int_{0}^{t\wedge\widetilde{T}_m}e^{-qs}\Delta D^{\pi}_s\mathrm{d}N_s,
\end{eqnarray}
where $M_{t\wedge\widetilde{T}_m}$ is the sum of the three zero mean martingales $M^1_{t\wedge\widetilde{T}_m}$, $M^2_{t\wedge\widetilde{T}_m}$ and $M^3_{t\wedge\widetilde{T}_m}$ given respectively by
\begin{eqnarray}
M^1_{t\wedge\widetilde{T}_m}
\hspace{-0.3cm}&=&\hspace{-0.3cm}
\int_{0-}^{t\wedge\widetilde{T}_m}e^{-qs}v^{\prime}(U^{\pi}_{s-})\mathrm{d}\bigg(X_s+c s-\sum_{r\leq s}\Delta X_r\mathbf{1}_{\{\Delta X_r\geq1\}}\bigg),\nonumber
\end{eqnarray}
and
\begin{eqnarray}
M^2_{t\wedge\widetilde{T}_m}
\hspace{-0.3cm}&=&\hspace{-0.3cm}
\int_{0-}^{t\wedge\widetilde{T}_m}\int_0^{\infty}e^{-qs}\Big(v(U^{\pi}_{s-}+y)-v(U^{\pi}_{s-})-v^{\prime}(U^{\pi}_{s-})y\mathbf{1}_{(0,1]}(y)\Big)\overline{N}(\mathrm{d}s,\mathrm{d}y),\nonumber
\end{eqnarray}
and
\begin{eqnarray}
M^3_{t\wedge\widetilde{T}_m}
\hspace{-0.3cm}&=&\hspace{-0.3cm}
\int_{0-}^{t\wedge\widetilde{T}_m}e^{-qs}\big[\Delta D^{\pi}_s+v(U_{s-}^{\pi}+\Delta X_s+\Delta D^{\pi}_{s})-v(U_{s-}^{\pi}+\Delta X_s)\big]\left(\mathrm{d}N_s-\gamma\mathrm{d}s\right),\nonumber
\end{eqnarray}
where we used the L\'evy-It\^o decomposition theorem (see, Theorem 2.1 in \cite{Kyprianou14}) and the compensation formula (see, Theorem 4.4 in \cite{Kyprianou14}), respectively. Then taking the expectation on both side of (\ref{HJB.3}), letting $t$ and $m$ go to infinity, and by bounded convergence theorem (note that $v(0)$ is bounded) yields
\begin{eqnarray}
v(x)\hspace{-0.3cm}&\geq&\hspace{-0.3cm}\lim_{t,m\rightarrow\infty}\mathrm{E}_x\Big[e^{-q(t\wedge\widetilde{T}_m)}v(U^{\pi}_{t\wedge\widetilde{T}_m})\Big]-\phi\mathrm{E}_x\Big[\int_{0-}^{t\wedge\widetilde{T}_m}e^{-qs}\mathrm{d}R^{\pi}_s\Big]
\nonumber\\
\hspace{-0.3cm}&&\hspace{-0.3cm}
+\mathrm{E}_x\Big[\int_{0}^{t\wedge\widetilde{T}_m}e^{-qs}\Delta D^{\pi}_s\mathrm{d}N_s\Big]+\mathrm{E}_x\Big[\int_{0-}^{t\wedge\widetilde{T}_m}e^{-qs}\lambda\omega(U^{\pi}_{s-})\mathrm{d}s\Big]
\nonumber\\
\hspace{-0.3cm}&\geq&\hspace{-0.3cm}
\lim_{t,m\rightarrow\infty}\mathrm{E}_x\Big[e^{-q(t\wedge\widetilde{T}_m)}v(0)\Big]-\phi\mathrm{E}_x\Big[\int_{0-}^{t\wedge\widetilde{T}_m}e^{-qs}\mathrm{d}R^{\pi}_s\Big]
\nonumber\\
\hspace{-0.3cm}&&\hspace{-0.3cm}
+\mathrm{E}_x\Big[\sum_{n=1}^{\infty}e^{-qT_n}\Delta {D}^{\pi}_{T_n}\mathbf{1}_{\{T_n\leq t\wedge\widetilde{T}_m\}}\Big]+\mathrm{E}_x\Big[\int_{0}^{t\wedge\widetilde{T}_m}e^{-qs}\lambda\omega(U^{\pi}_{s-})\mathrm{d}s\Big]
\nonumber\\
\hspace{-0.3cm}&\geq&\hspace{-0.3cm}-\phi\mathrm{E}_x\Big[\int_{0-}^{\infty}e^{-qs}\mathrm{d}R^{\pi}_s\Big]+\mathrm{E}_x\Big[\sum_{n=1}^{\infty}e^{-qT_n}\Delta D^{\pi}_{T_n}\Big]+\mathrm{E}_x\Big[\int_{0}^{\infty}e^{-qs}\lambda\omega(U^{\pi}_{s})\mathrm{d}s\Big]
\nonumber\\
\hspace{-0.3cm}&=&\hspace{-0.3cm}V_{\pi}^{\omega}(x),\quad x\in(0,\infty).\nonumber
\end{eqnarray}
The arbitrariness of $\pi$ and the continuity of $v$ imply that
$v(x)\geq V_{\pi}^{\omega}(x)$ for all $x\in[0,\infty)$ and all admissible $(D^{\pi},R^{\pi})\in\Pi$. The reverse inequality is trivial, and the proof is complete.
\end{proof}

\begin{lem}
It holds that, for $x\in(0,\infty)$,
\begin{eqnarray}\label{A.q.x}
\left\{\begin{aligned}
     & \mathcal{A}V_{0,b}^{\omega}(x)-qV_{0,b}^{\omega}(x)+\lambda \omega(x)=0,&x\in(0,b),\\
     & \mathcal{A}V_{0,b}^{\omega}(x)-qV_{0,b}^{\omega}(x)+\lambda \omega(x)+\gamma(x-b+V_{0,b}^{\omega}(b)-V_{0,b}^{\omega}(x))=0,&x\in[b,\infty).
\end{aligned}\right.
\end{eqnarray}
\end{lem}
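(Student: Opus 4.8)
The plan is to derive \eqref{A.q.x} probabilistically, by a Dynkin/It\^o argument that recycles the expansion already carried out in the proof of Lemma~\ref{lem.V.L}, rather than substituting the explicit formula of Lemma~\ref{V.x} into $\mathcal{A}$. Write $V:=V_{0,b}^{\omega}$. By Lemma~\ref{lem.par.b}, $V\in C^{1}(\mathbb{R})$ and, when $X$ has unbounded variation, $V\in C^{2}((0,\infty))$; this is exactly the regularity that makes $\mathcal{A}V(x)$ well defined pointwise on $(0,\infty)$ (in the bounded-variation case $\sigma=0$, so only $V'$ enters $\mathcal{A}$). First I would apply It\^o's formula (Theorem~4.57 of \cite{Jacod03}, as in \eqref{HJB.2}) to $e^{-qt}V(U_t^{0,b})$ under the double-barrier strategy, localized at $\widetilde{T}_m:=m\wedge\inf\{t:U_t^{0,b}\ge m\}$. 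Two structural facts streamline the drift: since capital is injected only at the instants when $U^{0,b}=0$ and $V'(0)=\phi$ (Lemma~\ref{lem.par.b}), the reflection term equals $\phi\int e^{-qs}\mathrm{d}R^{0,b}_s$; and since a Poisson observation time carries no jump of $X$ almost surely, the dividend paid there is $(U^{0,b}_{s-}-b)^{+}$ and the post-dividend level is $U^{0,b}_{s-}\wedge b$, so the compensator of the dividend-induced downward jumps of $V$ contributes the drift $\gamma\big(V(U^{0,b}_{s-}\wedge b)-V(U^{0,b}_{s-})\big)$.

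Taking expectations and discarding the mean-zero martingales gives
\begin{eqnarray}
\mathrm{E}_x\big[e^{-q(t\wedge\widetilde{T}_m)}V(U^{0,b}_{t\wedge\widetilde{T}_m})\big]-V(x)
&=&\mathrm{E}_x\Big[\int_0^{t\wedge\widetilde{T}_m}e^{-qs}\big((\mathcal{A}-q)V(U^{0,b}_{s-})+\gamma(V(U^{0,b}_{s-}\wedge b)-V(U^{0,b}_{s-}))\big)\mathrm{d}s\Big]\nonumber\\
&&+\phi\,\mathrm{E}_x\Big[\int_0^{t\wedge\widetilde{T}_m}e^{-qs}\mathrm{d}R^{0,b}_s\Big].\nonumber
\end{eqnarray}
On the other hand, the strong Markov property of $U^{0,b}$ together with the definition of $V$ as a discounted additive reward functional yields the dynamic-programming identity
\begin{eqnarray}
V(x)-\mathrm{E}_x\big[e^{-q(t\wedge\widetilde{T}_m)}V(U^{0,b}_{t\wedge\widetilde{T}_m})\big]
&=&\mathrm{E}_x\Big[\gamma\int_0^{t\wedge\widetilde{T}_m}e^{-qs}(U^{0,b}_{s-}-b)^{+}\mathrm{d}s-\phi\int_0^{t\wedge\widetilde{T}_m}e^{-qs}\mathrm{d}R^{0,b}_s\nonumber\\
&&+\lambda\int_0^{t\wedge\widetilde{T}_m}e^{-qs}\omega(U^{0,b}_s)\mathrm{d}s\Big],\nonumber
\end{eqnarray}
where I used the compensation formula $\mathrm{E}_x[\sum_{T_n\le\cdot}e^{-qT_n}(U^{0,b}_{T_n-}-b)^{+}]=\gamma\,\mathrm{E}_x[\int_0^{\cdot}e^{-qs}(U^{0,b}_{s-}-b)^{+}\mathrm{d}s]$ for the dividend stream. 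Adding the two displays, the $\phi\int e^{-qs}\mathrm{d}R^{0,b}_s$ contributions cancel and one is left with $\mathrm{E}_x[\int_0^{t\wedge\widetilde{T}_m}e^{-qs}H(U^{0,b}_{s-})\mathrm{d}s]=0$ for all $x,t,m$, where $H(x):=(\mathcal{A}-q)V(x)+\lambda\omega(x)+\gamma[(x-b)^{+}+V(x\wedge b)-V(x)]$. Since $H$ coincides with the left-hand side of the first equation in \eqref{A.q.x} on $(0,b)$ and with that of the second on $(b,\infty)$, it remains only to pass from this integrated identity to $H\equiv0$ pointwise.

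For that final step I would fix $x$ and a large $m$; because $\mathrm{E}_x[\int_0^{t\wedge\widetilde{T}_m}e^{-qs}H(U^{0,b}_s)\mathrm{d}s]=0$ for every $t$ and $\widetilde{T}_m>0$ almost surely, dividing by $t$ and letting $t\downarrow0$ is legitimate. As $U^{0,b}$ is c\`adl\`ag with $U^{0,b}_0=x$ and $H$ is continuous (by Lemma~\ref{lem.par.b} and the continuity of $\omega$), and since $t\wedge\widetilde{T}_m=t$ with probability tending to one, the normalized integral converges to $H(x)$, forcing $H(x)=0$. Repeating this for each $x\in(0,b)$ and each $x\in(b,\infty)$, and noting that $H$ is continuous at $b$, establishes \eqref{A.q.x}.

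I expect the main obstacle to be the careful bookkeeping in the It\^o expansion: one must keep the dividend's two distinct roles separate---a positive reward in the value-function accounting, and a \emph{downward} jump $V(U^{0,b}_{s-}\wedge b)-V(U^{0,b}_{s-})$ of $V$ inside the compensated Poisson integral---so as not to double count, and one must verify that the capital-injection term matches the cost $\phi$ precisely through the slope $V'(0)=\phi$. The remaining points are routine: the localization at $\widetilde{T}_m$ (with $V$ bounded on $[0,m]$ and $\int_0^\infty e^{-qt}\mathrm{d}R^{0,b}_t<\infty$) makes the stochastic integrals genuine martingales and legitimizes the limits, while the differentiation at $t=0^{+}$ rests on the continuity of $H$ supplied by Lemma~\ref{lem.par.b}. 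A purely computational alternative would substitute the explicit $V_{0,b}^{\omega}$ of Lemma~\ref{V.x} and invoke the $q$-harmonicity $(\mathcal{A}-q)f=0$ on $(0,\infty)$ of the building blocks $W_q,Z_q,Z_q(\cdot,\Phi_{q+\gamma})$ together with the resolvent identity $(\mathcal{A}-q)\int\omega(y)u^{q}(x,y)\mathrm{d}y=-\omega(x)$; there the difficulty migrates to evaluating $\mathcal{A}$ on the convolution and second-scale-function terms and to tracking the boundary behaviour across $b$.
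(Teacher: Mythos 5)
Your argument is correct in substance, but it takes a genuinely different route from the paper's on half of the domain. The paper splits the proof: on $(0,b)$ it observes that $U^{0,b}$ coincides with the uncontrolled $X$ before $\kappa_0^-\wedge\kappa_b^+$, exhibits $e^{-q(t\wedge\kappa)}V_{0,b}^{\omega}(X_{t\wedge\kappa})+\lambda\int_0^{t\wedge\kappa}e^{-qs}\omega(X_s)\,\mathrm{d}s$ as a martingale, and extracts the generator identity by the same divide-by-$t$ limit you use; on $[b,\infty)$, where that trick fails because a dividend is triggered at rate $\gamma$ immediately, it instead substitutes the explicit formula of Lemma \ref{V.x} and computes $(\mathcal{A}-q)$ term by term, using $(\mathcal{A}-q)e^{-\Phi_{q+\gamma}x}=\gamma e^{-\Phi_{q+\gamma}x}$ and identity (5.6) of \cite{Noba18} for the $Z_q$-convolution blocks. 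Your unified Dynkin-plus-DPP argument on the controlled process handles both regions at once: the compensator $\gamma\big(V(U_{s-}\wedge b)-V(U_{s-})\big)$ of the dividend jumps and the compensated reward $\gamma(U_{s-}-b)^{+}$ are exactly what produce the extra $\gamma(x-b+V_{0,b}^{\omega}(b)-V_{0,b}^{\omega}(x))$ on $[b,\infty)$, and the $\phi\int e^{-qs}\mathrm{d}R_s$ terms cancel via $V_{0,b}^{\omega\prime}(0+)=\phi$ as you say. What your route buys is the avoidance of the lengthy explicit computation on $[b,\infty)$; what it costs is the extra bookkeeping you correctly flag (the a.s. absence of a jump of $X$ at $T_n$, the compensation formula for the dividend stream up to the stopping time $t\wedge\widetilde{T}_m$, and the validity of the strong-Markov/DPP identity for the fixed double-barrier strategy), plus the minor technical point that the final limit requires $H$ to be bounded near the starting point --- which holds since $\mathcal{A}V_{0,b}^{\omega}$ is continuous on $(0,\infty)$ by Lemma \ref{lem.par.b} and one may further localize away from $0$. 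None of these is a gap; the proof goes through at the same level of rigor as the paper's own It\^o expansion in Lemma \ref{lem.V.L}.
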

\begin{proof}
Put $\kappa:=\kappa_0^-\wedge\kappa_{b}^+$. By the definition of $U_t^{0,b}$, one can get that the controlled process $U_t^{0,b}$ follows the same dynamics of $X$ before $\kappa$. By the strong Markov property of the process $X$, we have
\begin{eqnarray}
\hspace{-0.3cm}&&\hspace{-0.3cm}
\mathrm{E}_x\left[\sum_{n=1}^{\infty}e^{-qT_n}\Delta D^{0,b}_{T_n}-\phi\int_0^{\infty}e^{-qt}\mathrm{d}R_t^{0,b}+\lambda\int_0^{\infty}e^{-qt}\omega(U_t^{0,b})\mathrm{d}t\bigg|\mathcal{F}_{s\wedge\kappa}\right]
\nonumber\\
\hspace{-0.3cm}&=&\hspace{-0.3cm}
\mathrm{E}_x\left[\sum_{n=1}^{\infty}e^{-qT_n}\Delta D^{0,b}_{T_n}\mathbf{1}_{\{T_n\geq s\wedge\kappa\}}-\phi\int_0^{\infty}e^{-q(t+s\wedge\kappa)}\mathrm{d}R_{t+s\wedge\kappa}^{0,b}\right.
\nonumber\\
\hspace{-0.3cm}&&\hspace{-0.3cm}
\left.+\lambda\int_0^{\infty}e^{-q(t+s\wedge\kappa)}\omega(U_{t+s\wedge\kappa}^{0,b})\mathrm{d}t\bigg|\mathcal{F}_{s\wedge\kappa}\right]+\lambda\int_0^{s\wedge\kappa}e^{-qt}\omega(X_t)\mathrm{d}t
\nonumber\\
\hspace{-0.3cm}&=&\hspace{-0.3cm}
e^{-q(s\wedge\kappa)}\mathrm{E}_{X_{s\wedge\kappa}}\left[\sum_{n=1}^{\infty}e^{-qT_n}\Delta D^{0,b}_{T_n}-\phi\int_0^{\infty}e^{-qt}\mathrm{d}R_t^{0,b}+\lambda\int_0^{\infty}e^{-qt}\omega(U_t^{0,b})\mathrm{d}t\right]
\nonumber\\
\hspace{-0.3cm}&&\hspace{-0.3cm}
+\lambda\int_0^{s\wedge\kappa}e^{-qt}\omega(X_t)\mathrm{d}t
\nonumber\\
\hspace{-0.3cm}&=&\hspace{-0.3cm}
e^{-q(s\wedge\kappa)}V_{0,b}^{\omega}(X_{s\wedge\kappa})+\lambda\int_0^{s\wedge\kappa}e^{-qt}\omega(X_t)\mathrm{d}t,\quad x\in(0,b),\nonumber
\end{eqnarray}
which implies that the right-side of the above equation is a martingale. By It\^o's formula, it holds that
\begin{eqnarray}
\hspace{-0.3cm}&&\hspace{-0.3cm}
e^{-q(t\wedge\kappa)}V_{0,b}^{\omega}(X_{t\wedge\kappa})+\lambda\int_0^{t\wedge\kappa}e^{-qs}\omega(X_s)\mathrm{d}s-V_{0,b}^{\omega}(x)
\nonumber\\
\hspace{-0.3cm}&=&\hspace{-0.3cm}
\int_{0-}^{t\wedge\kappa}e^{-qs}\big((\mathcal{A}-q)V_{0,b}^{\omega}(X_{s-})+\lambda\omega(X_{s-})\big)\mathrm{d}s+\int_{0-}^{t\wedge\kappa}\sigma e^{-qs}V_{0,b}^{\omega\prime}(X_{s-})\mathrm{d}B_s
\nonumber\\
\hspace{-0.3cm}&&\hspace{-0.3cm}
+\int_{0-}^{t\wedge\kappa}e^{-qs}V_{0,b}^{\omega\prime}(X_{s-})\mathrm{d}(X_s+cs-\sigma B_s-\sum_{r\leq s}\Delta X_r\mathbf{1}_{\{\Delta X_r\geq1\}})
\nonumber\\
\hspace{-0.3cm}&&\hspace{-0.3cm}
+\int_{0-}^{t\wedge\kappa}\int_0^{\infty}e^{-qs}[V_{0,b}^{\omega}(X_{s-}-y)-V_{0,b}^{\omega}(X_{s-})+V_{0,b}^{\omega\prime}(X_{s-})y\mathbf{1}_{(0,1]}(y)]\overline{N}(\mathrm{d}s,\mathrm{d}y),\quad t\geq0.\nonumber
\end{eqnarray}
Following the same arguments in the proof of Lemma \ref{lem.V.L}, we get that all the terms (except for the first one) on the right hand side of the above equality are martingales starting from 0. Therefore, taking expectations on both sides of the above equation yields
\begin{eqnarray}
0=\mathrm{E}_x\left[\int_{0-}^{t\wedge\kappa}e^{-qs}\big((\mathcal{A}-q)V_{0,b}^{\omega}(X_{s-})+\lambda\omega(X_{s-})\big)\mathrm{d}s\right],\quad t\geq0,\,x\in(0,b^{\omega}).\nonumber
\end{eqnarray}
Dividing both sides of the above equation by $t$ and then setting $t\downarrow0$, we can obtain the equality in (\ref{A.q.x}) for $x\in(0,b)$ by the mean value theorem and the dominated convergence theorem. For $x\in[b,\infty)$, it can be verified that
\begin{eqnarray}\label{Aq.1}
\left(\mathcal{A}-q\right)(b-x+\frac{\psi^{\prime}(0+)}{q})=-q(b-x),
\end{eqnarray}
and
\begin{eqnarray}
\left(\mathcal{A}-q\right)e^{-\Phi_{q+\gamma}x}=\gamma e^{-\Phi_{q+\gamma}x}.
\end{eqnarray}
In addition, by (5.6) in \cite{Noba18}, one can get
\begin{eqnarray}\label{Aq.2}
\hspace{-0.3cm}&&\hspace{-0.3cm}
(\mathcal{A}-q)\Big(Z_q(b-x+y)+\gamma\int_0^yZ_q(b-x+y-z)W_{q+\gamma}(z)\mathrm{d}z\Big)
\nonumber\\
\hspace{-0.3cm}&=&\hspace{-0.3cm}
\gamma\Big(Z_q(b-x+y)+\gamma\int_0^yZ_q(b-x+y-z)W_{q+\gamma}(z)\mathrm{d}z\Big),\quad x\in[b,b+y).
\end{eqnarray}
By (\ref{Aq.1})-(\ref{Aq.2}), and the definition of $V_{0,b}^{\omega}(x)$, we have
\begin{eqnarray}
\hspace{-0.3cm}&&\hspace{-0.3cm}
(\mathcal{A}-q)V_{0,b}^{\omega}(x)
\nonumber\\
\hspace{-0.3cm}&=&\hspace{-0.3cm}
(\mathcal{A}-q)\bigg[\frac{-\gamma}{q+\gamma}\Big(b-x+\frac{\psi^{\prime}(0+)}{q}\Big)-\lambda\int_0^b\omega(y)W_q(y-x)\mathrm{d}y+\frac{\lambda\omega(b)}{q}Z_q(b-x)
\nonumber\\
\hspace{-0.3cm}&&\hspace{-0.3cm}
+\frac{\gamma Z_q(b)-\phi(q+\gamma)}{q(q+\gamma)\Phi_{q+\gamma}Z_q(b,\Phi_{q+\gamma})}(qe^{\Phi_{q+\gamma}(b-x)}+\gamma)
-\frac{qe^{\Phi_{q+\gamma}(b-x)}+\gamma}{q\Phi_{q+\gamma}Z_q(b,\Phi_{q+\gamma})}\times\bigg[
\lambda\int_0^b\omega^{\prime}(y)W_q(y)\mathrm{d}y
\nonumber\\
\hspace{-0.3cm}&&\hspace{-0.3cm}
+\lambda\int_0^{\infty}\omega^{\prime}(b+y)\bigg(W_q(b+y)+\gamma\int_0^yW_q(b+y-z)W_{q+\gamma}(z)\mathrm{d}z-\frac{\Phi_{q+\gamma}Z_q(b,\Phi_{q+\gamma})}{qe^{\Phi_{q+\gamma}(b-x)}+\gamma}
\nonumber\\
\hspace{-0.3cm}&&\hspace{-0.3cm}
\times\Big[\Big(Z_q(b-x+y)+\gamma\int_0^yZ_q(b-x+y-z)W_{q+\gamma}(z)\mathrm{d}z\Big)\mathbf{1}_{\{y>x-b\}}+\mathbf{1}_{\{y\leq x-b\}}\Big]\bigg)\mathrm{d}y\bigg]\bigg]
\nonumber\\
\hspace{-0.3cm}&=&\hspace{-0.3cm}
\frac{q\gamma}{q+\gamma}(x-b){\color{black}-\lambda\omega(b)}
+\frac{\gamma Z_q(b)-\phi(q+\gamma)}{(q+\gamma)\Phi_{q+\gamma}Z_q(b,\Phi_{q+\gamma})}(\gamma e^{\Phi_{q+\gamma}(b-x)}-\gamma)-\frac{\gamma e^{\Phi_{q+\gamma}(b-x)}-\gamma}{\Phi_{q+\gamma}Z_q(b,\Phi_{q+\gamma})}
\nonumber\\
\hspace{-0.3cm}&&\hspace{-0.3cm}
\times\bigg(
\lambda\int_0^b\omega^{\prime}(y)W_q(y)\mathrm{d}y+\lambda\int_0^{\infty}\omega^{\prime}(b+y)\Big[\Big(W_q(b+y)+\gamma\int_0^yW_q(b+y-z)W_{q+\gamma}(z)\mathrm{d}z\Big)
\nonumber\\
\hspace{-0.3cm}&&\hspace{-0.3cm}
-\frac{\Phi_{q+\gamma}Z_q(b,\Phi_{q+\gamma})}{ e^{\Phi_{q+\gamma}(b-x)}-1}\Big(Z_q(b-x+y)+\gamma\int_0^yZ_q(b-x+y-z)W_{q+\gamma}(z)\mathrm{d}z\Big)\mathbf{1}_{\{y>x-b\}}\Big]\mathrm{d}y\bigg)
\nonumber\\
\hspace{-0.3cm}&&\hspace{-0.3cm}
-{\lambda\omega(x)}+{\lambda\omega(b)}
\nonumber\\
\hspace{-0.3cm}&=&\hspace{-0.3cm}\gamma(x-b+V_{0,b}^{\omega}(b)-V_{0,b}^{\omega}(x))-{\color{black}\lambda\omega(x)}.\nonumber
\end{eqnarray}
The proof is complete.
\end{proof}
\begin{lem}\label{max.V}
Let $b^{\omega}>0$ be defined as in Lemma \ref{lem.b.w}. We have
\begin{eqnarray}
\max_{0\leq z\leq x}\{z+V^{\omega}_{0,b^{\omega}}(x-z)-V_{0,b^{\omega}}^{\omega}(x)\}=\left\{
\begin{aligned}
&0,&x\in(0,b^{\omega}),\\
&x-b^{\omega}+V^{\omega}_{0,b^{\omega}}(b^{\omega})-V_{0,b^{\omega}}^{\omega}(x),&x\in[b^{\omega},\infty).
\end{aligned}
\right.\nonumber
\end{eqnarray}
\end{lem}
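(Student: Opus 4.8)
The plan is to regard the bracketed quantity as a function of the dividend amount $z$ and to pin down its maximizer using the monotonicity and slope bounds for $V_{0,b^{\omega}}^{\omega}$ supplied by Lemma \ref{V.bw.parl}. Fix $x>0$ and set $h(z):=z+V_{0,b^{\omega}}^{\omega}(x-z)$ for $z\in[0,x]$. By Lemma \ref{lem.par.b} the map $V_{0,b^{\omega}}^{\omega}$ is continuously differentiable, so $h$ is differentiable on $(0,x)$ with $h^{\prime}(z)=1-V_{0,b^{\omega}}^{\omega\prime}(x-z)$. Hence the sign of $h^{\prime}(z)$ is dictated solely by whether $V_{0,b^{\omega}}^{\omega\prime}(x-z)$ lies above or below $1$; moreover, since $V_{0,b^{\omega}}^{\omega}$ is concave by Lemma \ref{V.bw.parl}, the map $z\mapsto V_{0,b^{\omega}}^{\omega\prime}(x-z)$ is non-decreasing, so $h^{\prime}$ switches sign at most once and only from nonnegative to nonpositive. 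Consequently $h$ is unimodal on the compact interval $[0,x]$, and its global maximum is located at the point where $V_{0,b^{\omega}}^{\omega\prime}(x-z)$ crosses the level $1$.

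First I would treat $x\in(0,b^{\omega})$. For every $z\in[0,x)$ we have $x-z\in(0,b^{\omega})$, whence the slope bound $V_{0,b^{\omega}}^{\omega\prime}(x-z)\geq 1$ of Lemma \ref{V.bw.parl} gives $h^{\prime}(z)\leq 0$. Thus $h$ is non-increasing on $[0,x]$ and is maximized at $z=0$, so $\max_{0\leq z\leq x}\{z+V_{0,b^{\omega}}^{\omega}(x-z)\}=h(0)=V_{0,b^{\omega}}^{\omega}(x)$ and the displayed maximum equals $0$. For $x\in[b^{\omega},\infty)$ I would split $[0,x]$ at $z^{*}:=x-b^{\omega}$. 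When $z\in(0,z^{*})$ one has $x-z>b^{\omega}$, so $V_{0,b^{\omega}}^{\omega\prime}(x-z)\leq 1$ and $h^{\prime}(z)\geq 0$; when $z\in(z^{*},x)$ one has $x-z<b^{\omega}$, so $V_{0,b^{\omega}}^{\omega\prime}(x-z)\geq 1$ and $h^{\prime}(z)\leq 0$. Therefore $h$ increases up to $z^{*}$ and decreases thereafter, so its maximum is $h(z^{*})=(x-b^{\omega})+V_{0,b^{\omega}}^{\omega}(b^{\omega})$; subtracting $V_{0,b^{\omega}}^{\omega}(x)$ yields exactly the second branch of the claim.

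The only delicate point, and the step I would be most careful about, is justifying that $z^{*}$ is genuinely a global maximizer even though the slope bounds are stated as weak inequalities at the transition $x-z=b^{\omega}$ and at the endpoint $x-z=0$. This is precisely where concavity is indispensable: because $V_{0,b^{\omega}}^{\omega\prime}$ is non-increasing and equals $1$ exactly at $b^{\omega}$, the sign of $h^{\prime}$ truly reverses at $z^{*}$ rather than merely touching zero along a flat stretch, so $h(z^{*})$ dominates $h(z)$ for all admissible $z$ and the maximizer is unique. Beyond invoking the slope conditions and concavity from Lemma \ref{V.bw.parl}, no further computation is needed.
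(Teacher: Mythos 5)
Your proof is correct and is exactly the argument the paper has in mind: the paper simply states the lemma is an immediate consequence of the slope conditions $V_{0,b^{\omega}}^{\omega\prime}\geq 1$ on $(0,b^{\omega})$ and $V_{0,b^{\omega}}^{\omega\prime}\leq 1$ on $[b^{\omega},\infty)$ from Lemma \ref{V.bw.parl}, and you have spelled out the one-line sign analysis of $h'(z)=1-V_{0,b^{\omega}}^{\omega\prime}(x-z)$ that makes this precise. (Your final worry about uniqueness of the maximizer is unnecessary: the lemma only asserts the value of the maximum, and the weak monotonicity of $h$ on either side of $z^{*}$ already gives that.)
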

\begin{proof}
The result is immediate consequence of Lemma \ref{V.bw.parl}.
\end{proof}

Putting together Lemma \ref{lem.par.b} and Lemmas \ref{lem.V.L}-\ref{max.V}, we can easily verify, in the following Theorem \ref{them.3.1}, the conjecture that the double barrier strategy with dividend barrier $b^{\omega}$ and capital injection barrier $0$ is the optimal strategy for the auxiliary control problem \eqref{valefunctionofpi}. The proof is omitted.

\begin{thm}\label{them.3.1}
The periodic dividend and capital injection strategy $(D_t^{0,b^{\omega}},R_t^{0,b^{\omega}})$ dominates all admissible singular periodic dividend and capital injection strategies that
$$V^{\omega}_{0,b^{\omega}}(x)=\sup_{\pi}V_{\pi}^{\omega}(x).$$
\end{thm}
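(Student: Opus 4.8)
The plan is to invoke the Verification Lemma (Lemma~\ref{lem.V.L}) with the test function $v=V_{0,b^{\omega}}^{\omega}$, so that proving optimality reduces to checking that this candidate value function meets all the hypotheses of that lemma. First I would record the regularity: Lemma~\ref{lem.par.b} asserts that $V_{0,b^{\omega}}^{\omega}$ is continuously differentiable on $(-\infty,\infty)$ and, in the unbounded variation case, twice continuously differentiable on $(0,\infty)$, which is exactly the smoothness demanded by Lemma~\ref{lem.V.L}. Furthermore, Lemma~\ref{V.bw.parl} shows that $V_{0,b^{\omega}}^{\omega}$ is increasing and concave, so in particular $v$ is non-decreasing as required.

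Next I would verify the two entries of the maximum in the Hamilton--Jacobi--Bellman inequality \eqref{HJB}. For the capital-injection entry, the slope bounds in Lemma~\ref{V.bw.parl} give $V_{0,b^{\omega}}^{\omega\prime}(x)\leq\phi$ on $(0,b^{\omega})$, $V_{0,b^{\omega}}^{\omega\prime}(x)\leq 1\leq\phi$ on $[b^{\omega},\infty)$, and $V_{0,b^{\omega}}^{\omega\prime}(x)=\phi$ for $x<0$; hence $v^{\prime}(x)-\phi\leq 0$ everywhere. For the generator entry, I would combine Lemma~\ref{max.V} with the identities in \eqref{A.q.x}. On $(0,b^{\omega})$ the inner maximum over $z$ vanishes and the first line of \eqref{A.q.x} gives $(\mathcal{A}-q)V_{0,b^{\omega}}^{\omega}(x)+\lambda\omega(x)=0$; on $[b^{\omega},\infty)$ the inner maximum equals $x-b^{\omega}+V_{0,b^{\omega}}^{\omega}(b^{\omega})-V_{0,b^{\omega}}^{\omega}(x)$, and the second line of \eqref{A.q.x} shows that adding $\gamma$ times this quantity to $(\mathcal{A}-q)V_{0,b^{\omega}}^{\omega}(x)+\lambda\omega(x)$ again produces $0$. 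Thus in both regimes the first entry of the maximum is identically zero, so the full left-hand side of \eqref{HJB} is $\leq 0$.

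With every hypothesis confirmed, Lemma~\ref{lem.V.L} yields the upper bound $V_{0,b^{\omega}}^{\omega}(x)\geq V_{\pi}^{\omega}(x)$ for all admissible $\pi\in\Pi$. The matching lower bound is automatic, since the double barrier strategy $(D_t^{0,b^{\omega}},R_t^{0,b^{\omega}})$ is itself admissible and its performance function is, by construction, $V_{0,b^{\omega}}^{\omega}$. Taking the supremum over $\pi$ therefore forces $V_{0,b^{\omega}}^{\omega}(x)=\sup_{\pi}V_{\pi}^{\omega}(x)$, which is the claim.

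I expect the genuinely delicate work to lie entirely upstream of this assembly, in the slope estimates of Lemma~\ref{V.bw.parl} and in the verification of \eqref{A.q.x}, both of which rest on the compact representation of $V_{0,b}^{\omega\prime}(b)$ from Lemma~\ref{LTkappa0-} and on the defining property of $b^{\omega}$ in Lemma~\ref{lem.b.w}. Once those are available, the present theorem is a routine case split between $(0,b^{\omega})$ and $[b^{\omega},\infty)$ in which each branch of the HJB maximum is shown to vanish; this is precisely why its proof can be safely omitted.
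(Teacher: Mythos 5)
Your proposal is correct and follows exactly the route the paper intends: the paper explicitly states that Theorem \ref{them.3.1} follows by ``putting together Lemma \ref{lem.par.b} and Lemmas \ref{lem.V.L}--\ref{max.V}'' and omits the proof, and your assembly of the regularity from Lemma \ref{lem.par.b}, the slope bounds from Lemma \ref{V.bw.parl}, the generator identities \eqref{A.q.x} together with Lemma \ref{max.V}, and the Verification Lemma \ref{lem.V.L} is precisely that argument. The closing observation that the lower bound is automatic because the double barrier strategy is itself admissible completes the identification correctly.
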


\section{Optimality of Regime-modulate Double Barrier Strategy}
\label{sec:opt}

We continue to prove the main result Theorem \ref{thm2.1} using results from the previous auxiliary control
problem with a final payoff and the recursive iteration based on dynamic programming principle. As
preparations,
let us consider the following space of functions
$$\mathcal{B}:=\{f:\mathbb{R}_+\times\mathcal{E}\rightarrow\mathbb{R}|\text{ for each } i\in\mathcal{E},\, f(\cdot,i)\in C([0,\infty))\text{ and } \|f\|<\infty\},$$
endowed with the norm $$\|f\|:=\max_{i\in\mathcal{E}}\sup_{x\geq0}\frac{|f(x,i)|}{1+|x|},$$
and
the metric $\rho(\cdot,\cdot)$ induced by  $\|\cdot\|$. It is not hard to check that the metric space $(\mathcal{B},\rho)$ is complete.

For any function $f: [0,\infty)\times \mathcal{E}\rightarrow \mathbb{R}$, we define a function $\widehat{f}: [0,\infty)\times \mathcal{E}\rightarrow \mathbb{R}$ that
\begin{eqnarray}
\widehat{f}(x,i):=\sum_{j\in\mathcal{E}, j\neq  i}\frac{\lambda_{ij}}{\lambda_{i}}\int_{-\infty}^{0}\left[f(x+y,j)\mathbf{1}_{\{-y\leq x\}}+(\phi(x+y)+f(0,j))\mathbf{1}_{\{-y>x\}}\right]\mathrm{d}F_{ij}(y),\nonumber
\end{eqnarray}
where $\lambda_{i}=\sum_{j\neq i}\lambda_{ij}$, and $F_{ij}$ is the distribution function of $J_{ij}$ for $i,j\in\mathcal{E}$.
Note that
\begin{eqnarray}
\label{4.2}
\frac{\left|\widehat{f}(x,i)\right|}{1+|x|}
\hspace{-0.3cm}&=&\hspace{-0.3cm}
\left|\sum_{j\in\mathcal{E}, j\neq  i}\frac{\lambda_{ij}}{\lambda_{i}}\int_{-\infty}^{0}\bigg[\frac{f(x+y,j)}{1+|x|}\mathbf{1}_{\{-y\leq x\}}+\Big(\frac{f(0,j)}{1+|x|}+\frac{\phi\left(x+y\right)}{1+|x|}\Big)\mathbf{1}_{\{-y>x\}}\bigg]\right|
\nonumber\\
\hspace{-0.3cm}&\leq&\hspace{-0.3cm}
\sum_{j\in\mathcal{E}, j\neq  i}\frac{\lambda_{ij}}{\lambda_{i}}\Big[\|f\|\int_{-\infty}^0\frac{1+|x+y|}{1+|x|}\mathbf{1}_{\{-y\leq x\}}\mathrm{d}F_{ij}(y)+\frac{\phi\mathrm{E}|J_{ij}|}{1+|x|}+(\phi+|f(0,j)|)\Big]
\nonumber\\
\hspace{-0.3cm}&\leq&\hspace{-0.3cm}
\sum_{j\in\mathcal{E}, j\neq  i}\frac{\lambda_{ij}}{\lambda_{i}}\big[\|f\|+\phi\mathrm{E}|J_{ij}|+(\phi+|f(0,j)|)\big]
,\quad (x,i)\in [0,\infty)\times\mathcal{E},\nonumber
\end{eqnarray}
which together the fact that $\max_{i,j\in\mathcal{E}}\mathrm{E}|J_{ij}|<\infty$, one can get that $\widehat{f}\in\mathcal{B}$ when $f\in\mathcal{B}$.

For any function $\mathbf{b}=(b_{i})\in[0,\infty)^{\mathcal{E}}$, denote by $V_{0,\mathbf{b}}(x,i)$ the value function (i.e., the NPV of the accumulated differences between dividends and the costs of capital injections) of the periodic dividend and capital injection strategy with dynamic upper periodic barrier $b_{Y_t}$ and constant lower barrier $0$. In addition, let us define a mapping $\mathcal{T}_{\mathbf{b}}$ acting on $f \in \mathcal{B}$ such that
\begin{eqnarray}
\label{def.Tb}
\mathcal{T}_{\mathbf{b}}f(x,i)
\hspace{-0.3cm}&:=&\hspace{-0.3cm}
\mathrm{E}_x^i\bigg[\sum_{n=1}^{\infty}e^{-q_{i}T_n}\Delta D_{T_n}^{b_{i},i}
-\phi\int_{0}^{\infty}e^{-q_{i}t}\mathrm{d}R_{t}^{b_{i},i}+\lambda_{i}\int_{0}^{\infty}e^{-q_{i}t}\widehat{f}(U_{t}^{b_{i},i},i)\mathrm{d}t\bigg],
\end{eqnarray}
where $q_i=\delta_{i}+\lambda_i$ and $\mathrm{E}_x^i$ denotes the expectation operator with respect to the law of the process $X^{i}$ conditioned on the event $\{X_{0}^{i}=x\}$. The process $U_{t}^{b_{i},i}$ is the controlled process with upper periodic barrier $b_{i}\geq0$, lower reflecting barrier $0$, and the underlying risk process $X^{i}$; and $D_{t}^{b_{i},i}$, $R_{t}^{b_{i},i}$ are the cumulative dividends paid and capitals injected, respectively.
In what follows, the scale functions of $X^{i}$ will be denoted by $W_{q,i}$, $Z_{q,i}$ and $\overline{Z}_{q,i}$, whose definitions are given in Section 2.1 where the subscript $i$ is absent.

\begin{lem}\label{v.in.B}
For $(x,i)\in\mathbb{R}_+\times\mathcal{E}$, we have $V(x,i)\in\mathcal{B}$.
\end{lem}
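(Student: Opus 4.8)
The plan is to establish two facts about $V$: that each section $V(\cdot,i)$ is Lipschitz on $[0,\infty)$ (hence continuous), and that $V$ has at most linear growth, so that $\|V\|<\infty$. The key reduction is the two-sided comparison estimate
$$0 \le V(x+h,i) - V(x,i) \le \phi h, \qquad x\ge 0,\ h>0,$$
obtained by perturbing strategies. For the right inequality I would take an admissible strategy $\pi$ for the initial level $x+h$ and build a strategy for level $x$ that injects an extra $h$ units of capital at time $0$ and then copies $\pi$; this leaves the controlled surplus identical from time $0+$ on, costs exactly $\phi h$ in present value, and gives $V(x,i)\ge V_\pi(x+h,i)-\phi h$, whence the bound after taking the supremum over $\pi$. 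For the left inequality (monotonicity) I would run any admissible $\pi=(D,R)$ for level $x$ with the \emph{same} cumulative dividend process but with the minimal reflection of the form \eqref{R.form} computed from the higher starting level $x+h$; the resulting surplus is pointwise larger, so strictly less capital is injected while dividends are unchanged, yielding $V(x+h,i)\ge V_\pi(x,i)$. The estimate shows at once that each $V(\cdot,i)$ is $\phi$-Lipschitz, hence lies in $C([0,\infty))$, and that $|V(x,i)|\le |V(0,i)|+\phi x$; the whole statement thus reduces to proving $-\infty<V(0,i)<\infty$ for every $i$.

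For the upper finiteness I would bound an arbitrary admissible $\pi$ directly. Writing $\Lambda_t:=\int_0^t\delta_{Y_s}\mathrm{d}s$ and using $\phi>1$,
$$V_\pi(x,i)\le \mathrm{E}_{x,i}\Big[\int_{0-}^\infty e^{-\Lambda_t}\,\mathrm{d}(D^\pi-R^\pi)_t\Big]=\mathrm{E}_{x,i}\Big[\int_{0-}^\infty e^{-\Lambda_t}\,\mathrm{d}X_t\Big]-\mathrm{E}_{x,i}\Big[\int_{0-}^\infty e^{-\Lambda_t}\,\mathrm{d}U^\pi_t\Big],$$
since $D^\pi_t-R^\pi_t=X_t-U^\pi_t$. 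An integration by parts in the last term, together with $U^\pi\ge 0$, $\delta\ge0$ and $U^\pi_{0-}=x$, gives $-\int_{0-}^\infty e^{-\Lambda_t}\mathrm{d}U^\pi_t\le x$. By spatial homogeneity the first expectation equals a constant $G_i:=\mathrm{E}_{0,i}[\int_{0-}^\infty e^{-\Lambda_t}\mathrm{d}X_t]$, independent of $x$ and of $\pi$, so $V(x,i)\le x+G_i$.

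For the lower finiteness I would use the single explicit admissible strategy that pays no dividends and injects only the minimal capital of the form \eqref{R.form}; its value is $-\phi\,\mathrm{E}_{x,i}[\int_0^\infty e^{-\Lambda_t}\mathrm{d}R^0_t]$ with $R^0_t=(-\underline X_t)^+$, and the monotonicity in $x$ of this reflection bounds it below by $-\phi K_i$, where $K_i:=\mathrm{E}_{0,i}[\int_0^\infty e^{-\Lambda_t}\mathrm{d}R^0_t]$. Combining this with the upper bound and the Lipschitz estimate yields $|V(x,i)|\le C_i(1+x)$ together with continuity, i.e. $V\in\mathcal{B}$.

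The main obstacle is the finiteness of the two discounted functionals $G_i$ and $K_i$. Reducing $G_i$ to $\mathrm{E}_{0,i}[\int_0^\infty e^{-\Lambda_t}m_{Y_t}\mathrm{d}t]$, where $m_j$ is the (finite, by the standing first-moment hypotheses $\mathrm{E}[X^j_1]=-\psi_j'(0+)<\infty$ and $\max_{j,k}\mathrm{E}|J_{jk}|<\infty$) mean drift of the process in regime $j$, requires that the discounted stochastic-integral part be a genuine mean-zero martingale and, crucially, that $\mathrm{E}_{0,i}[\int_0^\infty e^{-\Lambda_t}\mathrm{d}t]<\infty$, i.e. that $\Lambda_t\to\infty$ so that the effective discounting along the trajectory is strictly positive; this is where the finiteness of the state space of $Y$ together with the discounting must be invoked. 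The same integrability estimate underlies $K_i<\infty$. These moment and effective-discounting bounds, rather than the (essentially routine) comparison arguments, are the technical heart of the lemma.
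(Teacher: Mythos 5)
Your proposal is correct, but it follows a genuinely different route from the paper's. The paper's proof works purely by sandwiching: it exhibits one explicit super\-optimal bound (pay out the running supremum $\overline{X}_t\vee 0$ as dividends, discount everything at $\underline{\delta}=\min_i\delta_i$, and drop the injection cost) and one explicit suboptimal strategy (inject to hold the surplus at level $x$ until the first Poisson arrival, pay out everything then, and afterwards only bail out), obtaining $\underline{V}(x,i)\le V(x,i)\le x+\mathrm{E}_{0,i}[\int_0^\infty e^{-\underline{\delta}t}\mathrm{d}(\overline{X}_t\vee0)]$, and then asserts that both bounds are finite under $\|\cdot\|$. Notably, the paper's proof never addresses the continuity of $V(\cdot,i)$, which is part of the definition of $\mathcal{B}$; your Lipschitz estimate $0\le V(x+h,i)-V(x,i)\le\phi h$ supplies exactly this and simultaneously reduces the growth bound to the finiteness of $V(0,i)$, so your argument is in this respect more complete. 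Your upper bound via $\phi>1$, the identity $D^\pi-R^\pi=X-U^\pi$ and integration by parts is also cleaner than the paper's ``maximum reasonable dividends'' heuristic, which is slightly delicate because an admissible strategy may inject capital and later redistribute it (this is only ruled out by $\phi>1$ and the monotone discount, i.e.\ essentially by your inequality). Two small caveats: (i) your monotonicity step $V(x+h,i)\ge V(x,i)$ needs the reduction to minimal reflection of the form \eqref{R.form} (the regime-switching analogue of Lemma \ref{R.continu.}) together with the observation that, since $e^{-\Lambda_t}$ is nonincreasing and $R_{0-}=0$, pathwise domination of the injection processes implies domination of their discounted integrals (a Fubini argument using $e^{-\Lambda_t}=\int_t^\infty\delta_{Y_s}e^{-\Lambda_s}\mathrm{d}s$); (ii) the finiteness of $G_i$ and $K_i$, which you correctly isolate as the technical heart, is left at the same level of assertion as the paper's closing sentence, and both proofs implicitly require the discounting to be effective (e.g.\ $\min_i\delta_i>0$, which the standing assumption $(\delta_i)\in[0,\infty)^{\mathcal{E}}$ does not literally guarantee). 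Neither caveat is a gap relative to the paper's own standard of rigor.
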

\begin{proof}
Denote $\underline{\delta}:=\min_{i\in\mathcal{E}}\delta_i$, $\overline{X}_t:=\sup_{s\leq t}X_s$, and $\underline{X}_t:=\inf_{s\leq t}X_s$. We can derive an upper bound of $V(x,i)$ by considering the extreme case where the manager of the company pays every dollar accumulated by $X$ as dividends as early as possible (i.e., $D_t:=\overline{X}_t\vee0$), and cover all deficits by capital injection (i.e., $R_t:=-\inf_{s\leq t}(X_s-(\overline{X}_t\vee0))$). Note that the surplus process $U_t=X_t-(\overline{X}_t\vee0)-\inf_{s\leq t}(X_s-(\overline{X}_t\vee0))$ takes positive values. Hence, $D_t=\overline{X}_t\vee0$ amounts to the maximum reasonable amount of dividends paid until time $t\geq0$. Therefore, we have
\begin{eqnarray}
V(x,i)\leq x+\mathrm{E}_{0,i}\Big[\int_0^{\infty}e^{-\underline{\delta}t}\mathrm{d}(\overline{X}_t\vee0)\Big]=:\overline{V}(x,i).\nonumber
\end{eqnarray}
Similarly, we can also derive a lower bound by considering the extreme case where the manager of the company injects capitals to keep the surplus over $x$ before the time that the first Poisson arrival with intensity $\gamma>0$ (i.e., $R_t:=-\inf_{s\leq t}(X_s-x)\wedge0$, $t\leq e_{\gamma}$), and pays whatever he has as dividends as the first Poisson arrival time (i.e., $D_{e_{\gamma}}:=\big(X_{e_{\gamma}}-\inf_{s\leq e_{\gamma}}(X_s-x)\wedge0\big)$); and then pays no dividends afterwards and bails out all deficits by injecting capitals.
Hence, by the spatial homogeneity, we have
\begin{eqnarray}
\underline{V}(x,i)
\hspace{-0.3cm}&:=&\hspace{-0.3cm}
\mathrm{E}_{x,i}\Big[e^{-\underline{\delta}e_{\gamma}}\big(X_{e_{\gamma}}-(\underline{X}_{e_{\gamma}}-x)\wedge0\big)+\phi\int_0^{e_{\gamma}}e^{-\underline{\delta}t}\mathrm{d}\big((\underline{X}_t-x)\wedge0\big)\Big]
\nonumber\\
\hspace{-0.3cm}&&\hspace{-0.3cm}
+\mathrm{E}_{0,i}\Big[\phi\int_0^{\infty}e^{-\underline{\delta}(t+e_{\gamma})}\mathrm{d}\big(\underline{X}_{t+e_{\gamma}}\wedge0\big)\Big]
\nonumber\\
\hspace{-0.3cm}&=&\hspace{-0.3cm}
\mathrm{E}_{0,i}\Big[e^{-\underline{\delta}e_{\gamma}}\big(X_{e_{\gamma}}-\underline{X}_{e_{\gamma}}\wedge0\big)+\phi\int_0^{e_{\gamma}}e^{-\underline{\delta}t}\mathrm{d}\big(\underline{X}_t\wedge0\big)\Big]
\nonumber\\
\hspace{-0.3cm}&&\hspace{-0.3cm}
+\mathrm{E}_{0,i}\Big[\phi\int_0^{\infty}e^{-\underline{\delta}(t+e_{\gamma})}\mathrm{d}\big(\underline{X}_{t+e_{\gamma}}\wedge0\big)\Big]
+x\mathrm{E}\Big[e^{-\underline{\delta}e_{\gamma}}\Big]
\nonumber\\
\hspace{-0.3cm}&\leq&\hspace{-0.3cm}
V(x,i).\nonumber
\end{eqnarray}
It is not hard to verify that both the upper and lower bound is bounded under the norm $\|\cdot\|$, which can yield $V(x,i)\in\mathcal{B}$. The proof is complete.
\end{proof}

\begin{lem}\label{v.tv}
For $\mathbf{b}\in[0,\infty)^{\mathcal{E}}$ and $(x,i)\in\mathbb{R}\times\mathcal{E}$, we have $V_{0,\mathbf{b}}(x,i)=\mathcal{T}_{\mathbf{b}}V_{0,\mathbf{b}}(x,i)$.
\end{lem}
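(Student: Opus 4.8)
The plan is to establish this fixed-point identity by conditioning on the first regime-switch time and invoking the strong Markov property of the Markov additive process $(X,Y)$. Write $\tau:=e_{\lambda_i}$ for the first time the chain $Y$ leaves its initial state $i$; under $\mathrm{P}_{x,i}$ this is exponential with rate $\lambda_i=\sum_{j\neq i}\lambda_{ij}$, independent of the driving L\'evy process $X^i$ and of the Poisson dividend clock $(N_t)$. On $[0,\tau)$ the regime is frozen at $i$, so the surplus governed by the regime-modulated double-barrier strategy coincides with the single-regime controlled process $U^{b_i,i}$ driven by $X^i$ with upper barrier $b_i$ and reflecting barrier $0$, and its dividend and injection increments are exactly $\mathrm{d}D^{b_i,i}$ and $\mathrm{d}R^{b_i,i}$ (recall from Lemma \ref{R.continu.} that within a single regime $R$ is continuous and of reflection form). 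Splitting the defining expectation of $V_{0,\mathbf{b}}(x,i)$ at $\tau$ and using $\int_0^t\delta_{Y_s}\,\mathrm{d}s=\delta_i t$ for $t\le\tau$, I would write
$$
V_{0,\mathbf{b}}(x,i)=\mathrm{E}_{x,i}\Big[\int_{[0,\tau)}e^{-\delta_i t}\mathrm{d}D_t-\phi\int_{[0,\tau)}e^{-\delta_i t}\mathrm{d}R_t\Big]+\mathrm{E}_{x,i}\big[e^{-\delta_i\tau}\,\Xi\big],
$$
where $\Xi$ is the value accrued from $\tau$ onwards, including any capital injected at the switch.

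For the pre-switch term I would exploit the independence of $\tau\sim\mathrm{Exp}(\lambda_i)$ to convert the killing at $\tau$ into an additional discount factor $e^{-\lambda_i t}$. Since dividends are paid only at the Poisson epochs $T_n$,
$$
\mathrm{E}_{x,i}\Big[\int_{[0,\tau)}e^{-\delta_i t}\mathrm{d}D_t\Big]=\mathrm{E}_x^{i}\Big[\sum_{n\ge1}e^{-\delta_iT_n}\Delta D_{T_n}^{b_i,i}\,\mathrm{P}(\tau>T_n\mid T_n)\Big]=\mathrm{E}_x^{i}\Big[\sum_{n\ge1}e^{-q_iT_n}\Delta D_{T_n}^{b_i,i}\Big],
$$
with $q_i=\delta_i+\lambda_i$, and likewise $\mathrm{E}_{x,i}\big[\int_{[0,\tau)}e^{-\delta_i t}\mathrm{d}R_t\big]=\mathrm{E}_x^{i}\big[\int_0^\infty e^{-q_it}\mathrm{d}R_t^{b_i,i}\big]$ by Fubini against the density $\lambda_ie^{-\lambda_it}$ (here the continuity of $R^{b_i,i}$ lets me pull out $\mathrm{P}(\tau>t)=e^{-\lambda_it}$). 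These are precisely the first two terms of $\mathcal{T}_{\mathbf{b}}V_{0,\mathbf{b}}(x,i)$.

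It then remains to identify $\mathrm{E}_{x,i}[e^{-\delta_i\tau}\Xi]$ with the $\widehat{f}$-term. At $\tau$ the chain jumps to state $j$ with probability $\lambda_{ij}/\lambda_i$ and $X$ makes a downward jump $J_{ij}\sim F_{ij}$, so the pre-jump surplus $U_{\tau-}^{b_i,i}\ge0$ becomes $U_{\tau-}^{b_i,i}+J_{ij}$; if this is negative the reflection strategy injects $-(U_{\tau-}^{b_i,i}+J_{ij})$ at cost $\phi$ per unit and restarts from $0$. By the strong Markov property of $(X,Y)$ the value accrued after $\tau$ equals $V_{0,\mathbf{b}}(U_{\tau-}^{b_i,i}+J_{ij},j)$ when $-J_{ij}\le U_{\tau-}^{b_i,i}$ and $\phi(U_{\tau-}^{b_i,i}+J_{ij})+V_{0,\mathbf{b}}(0,j)$ otherwise, so conditioning on $\mathcal{F}_{\tau-}$ and averaging over $(j,J_{ij})$ produces exactly $\widehat{V_{0,\mathbf{b}}}(U_{\tau-}^{b_i,i},i)$ in the notation defining $\widehat{f}$. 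Using once more the independence of $\tau$ together with $U_{\tau-}^{b_i,i}=U_\tau^{b_i,i}$ a.s. (the process has only countably many jumps, so its left limit at the independent exponential time agrees with its value there),
$$
\mathrm{E}_{x,i}\big[e^{-\delta_i\tau}\,\widehat{V_{0,\mathbf{b}}}(U_{\tau-}^{b_i,i},i)\big]=\lambda_i\,\mathrm{E}_x^{i}\Big[\int_0^\infty e^{-q_it}\,\widehat{V_{0,\mathbf{b}}}(U_t^{b_i,i},i)\,\mathrm{d}t\Big],
$$
which is the third term of $\mathcal{T}_{\mathbf{b}}V_{0,\mathbf{b}}(x,i)$; summing the three contributions gives the claim, and the case $x<0$ follows from the linear boundary relation $V_{0,\mathbf{b}}(x,i)=\phi x+V_{0,\mathbf{b}}(0,i)$. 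The hard part will be the careful bookkeeping of the injection occurring exactly at $\tau$ --- matching its discounted cost and the post-jump restart value against the two indicator branches of $\widehat{f}$ --- and justifying the integrability needed for Fubini, which I expect to obtain from $V_{0,\mathbf{b}}\in\mathcal{B}$ (Lemma \ref{v.in.B}) and $\max_{i,j\in\mathcal{E}}\mathrm{E}|J_{ij}|<\infty$.
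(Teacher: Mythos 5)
Your proposal is correct and follows essentially the same route as the paper: condition on the first regime-switch time $e_{\lambda_i}$, use the strong Markov property to write the post-switch value as $V_{0,\mathbf{b}}(U_{e_{\lambda_i}}^{b_i,i}+J_{iY_{e_{\lambda_i}}},Y_{e_{\lambda_i}})$, exploit the independence of the exponential clock to trade killing for the extra discount $\lambda_i$ (giving rate $q_i=\delta_i+\lambda_i$), and match the switch term to $\widehat{V_{0,\mathbf{b}}}$ via the same two-indicator decomposition over $\{U_t^{b_i,i}\geq -J_{ij}\}$ and its complement, with the linear extension $V_{0,\mathbf{b}}(x,i)=\phi x+V_{0,\mathbf{b}}(0,i)$ for $x<0$. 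The integrability bookkeeping you flag is handled in the paper exactly as you anticipate, via the membership $V_{0,\mathbf{b}}\in\mathcal{B}$ and $\max_{i,j}\mathrm{E}|J_{ij}|<\infty$.
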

\begin{proof}
When $Y_0=i$, let $e_{\lambda_i}$ be the first time $Y$ switches its states. By the strong Markov property and the independence between $(X^i)_{i\in\mathcal{E}}$, $Y$ and $(J_{ij})_{i,j\in\mathcal{E}}$, we obtain
\begin{eqnarray}\label{V.0b.F}
V_{0,\mathbf{b}}(x,i)
\hspace{-0.3cm}&=&\hspace{-0.3cm}
\mathrm{E}_{x,i}\bigg[\sum_{n=1}^{\infty}e^{-\delta_{i}T_n}\Delta D^{b_{i},i}_{T_n}\mathbf{1}_{\{T_n\leq e_{\lambda_i}\}}
-\phi\int_{0}^{e_{\lambda_{i}}}e^{-\delta_{i}t}\mathrm{d}R_{t}^{b_{i},i}+e^{-\delta_{i}e_{\lambda_{i}}}V_{0,\mathbf{b}}(U_{e_{\lambda_{i}}}^{b_{i},i}+J_{iY_{e_{\lambda_{i}}}},Y_{e_{\lambda_{i}}})\bigg]
\nonumber\\
\hspace{-0.3cm}&=&\hspace{-0.3cm}
\mathrm{E}_x^i\bigg[\sum_{n=1}^{\infty}e^{-q_{i}T_n}\Delta D^{b_{i},i}_{T_n}
-\phi\int_{0}^{\infty}e^{-q_{i}t}\mathrm{d}R_{t}^{b_{i},i}\bigg]+\sum_{j\neq i}\lambda_{ij}\mathrm{E}_x^i\Big[\int_0^{\infty}e^{-q_{i}t}V_{0,\mathbf{b}}(U_{t}^{b_{i},i}+J_{ij},j)\mathrm{d}t\Big]
\nonumber\\
\hspace{-0.3cm}&=&\hspace{-0.3cm}
-\frac{\gamma}{q_i+\gamma}\left[\overline{Z}_{q_i,i}(b_i-x)+\frac{\psi_i^{\prime}(0+)}{q_i}\right]+\left[Z_{q_i,i}(b_i-x,\Phi_{q_i+\gamma})+\frac{\gamma}{q_i}Z_{q_i,i}(b_i-x)\right]
\nonumber\\
\hspace{-0.3cm}&&\hspace{-0.3cm}
\times\frac{\left(\gamma Z_{q_i,i}(b_i)-\phi(q_i+\gamma)\right)}{(q_i+\gamma)\Phi_{q_i+\gamma}Z_{q_i,i}(b_i,\Phi_{q_i+\gamma})}
+\sum_{j\neq i}\frac{\lambda_{ij}}{q_i}\int_{-\infty}^0\bigg[\int_{0+}^{\infty}V_{0,\mathbf{b}}(y+z,j)\mathrm{P}_{x}\Big(U_{e_{q_i}}^{0,b_i}\in\mathrm{d}y\Big)
\nonumber\\
\hspace{-0.3cm}&&\hspace{-0.3cm}
+V_{0,\mathbf{b}}(z,j)\mathrm{P}_{x}\Big(U_{e_{q_i}}^{0,b_i}=0\Big)\bigg]\mathrm{d}F_{ij}(z),\quad x\in[0,\infty),\,\,q_i=\delta_i+\lambda_i,
\nonumber\\
V_{0,\mathbf{b}}(x,i)
\hspace{-0.3cm}&=&\hspace{-0.3cm}
\phi x+V_{0,\mathbf{b}}(0,i),\quad x\in(-\infty,0).
\end{eqnarray}
 Using the expression of $V_{0,b_i}^0$, the boundedness of $V_{0,\mathbf{b}}$ under norm $\|\cdot\|$ in Lemma \ref{v.in.B} as well as the fact that $\max_{j\neq i}\mathrm{E}|J_{ij}|<\infty$, we can deduce that $V_{0,\mathbf{b}}\in\mathcal{B}$.
By the defination of $\mathcal{T}_{\mathbf{b}}$ in (\ref{def.Tb}), the second equality in
(\ref{V.0b.F}),
 the independence between $U_t^{b_i,i}$ and $J_{ij}$ for all $i,j\in\mathcal{E}$, and the fact that
$$V_{0,\mathbf{b}}(U_t^{b_i,i}+J_{ij},j)=V_{0,\mathbf{b}}(U_t^{b_i,i}+J_{ij},j)\mathbf{1}_{\{U_t^{b_i,i}\geq-J_{ij}\}}+\Big(V_{0,\mathbf{b}}(0,j)+\phi(U_t^{b_i,i}+J_{ij})\Big)\mathbf{1}_{\{U_t^{b_i,i}<-J_{ij}\}},$$
we can conclude that $V_{0,\mathbf{b}}(x,i)=\mathcal{T}_{\mathbf{b}}V_{0,\mathbf{b}}(x,i)$. The proof is complete.
\end{proof}
\begin{lem}
The operator $\mathcal{T}_{\mathbf{b}}$ is a contraction on $\mathcal{B}$ under the metric $\rho(\cdot,\cdot)$. In particular, for $f\in\mathcal{B}$, we have that \begin{eqnarray}\label{V.b.lim}
V_{0,\mathbf{b}}(x,i)=\lim_{n\rightarrow\infty}\mathcal{T}_{\mathbf{b}}^{n}f(x,i),\quad(x,i)\in[0,\infty)\times\mathcal{E},
\end{eqnarray}
where the convergence is under metric $\rho(\cdot,\cdot)$ and $\mathcal{T}_{\mathbf{b}}^n(f):=\mathcal{T}_{\mathbf{b}}(\mathcal{T}_{\mathbf{b}}^{n-1}(f))$ for $n>1$ with $\mathcal{T}_{\mathbf{b}}^1:=\mathcal{T}_{\mathbf{b}}$.
\end{lem}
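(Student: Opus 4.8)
The plan is to apply Banach's fixed point theorem on the complete metric space $(\mathcal{B},\rho)$. First I would observe that the dividend and capital-injection terms in \eqref{def.Tb} do not involve the argument $f$, so for $f,g\in\mathcal{B}$ they cancel in the difference, leaving
\[
\mathcal{T}_{\mathbf{b}}f(x,i)-\mathcal{T}_{\mathbf{b}}g(x,i)=\lambda_i\,\mathrm{E}_x^i\Big[\int_0^\infty e^{-q_i t}\big(\widehat{f}(U_t^{b_i,i},i)-\widehat{g}(U_t^{b_i,i},i)\big)\mathrm{d}t\Big].
\]
This isolates the only source of contractivity, namely the regime-switching term $\widehat{f}$.

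Next I would estimate $\widehat f-\widehat g$ pointwise. In the difference the capital-injection cost terms $\phi(x+y)$ cancel, and $f-g$ is evaluated only at points of the form $(z+y)\vee 0\le z$ with $y\le 0$. Using $|(f-g)(w,j)|\le\|f-g\|(1+w)$ together with $\sum_{j\ne i}\lambda_{ij}/\lambda_i=1$, I obtain $|\widehat f(z,i)-\widehat g(z,i)|\le\|f-g\|(1+z)$ for $z\ge 0$, that is $\|\widehat f-\widehat g\|\le\|f-g\|$. Substituting this yields
\[
\frac{|\mathcal{T}_{\mathbf{b}}f(x,i)-\mathcal{T}_{\mathbf{b}}g(x,i)|}{1+x}\le\|f-g\|\cdot\frac{\lambda_i}{1+x}\,\mathrm{E}_x^i\Big[\int_0^\infty e^{-q_i t}\big(1+U_t^{b_i,i}\big)\mathrm{d}t\Big].
\]

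The heart of the proof, and the step I expect to be the main obstacle, is to produce a constant $\alpha\in(0,1)$, independent of $(x,i)$, dominating the right-hand factor. Writing $\mathrm{E}_x^i[\int_0^\infty e^{-q_it}(1+U_t^{b_i,i})\mathrm{d}t]=\frac{1}{q_i}\big(1+\mathrm{E}_x^i[U_{e_{q_i}}^{b_i,i}]\big)$, I would evaluate $\mathrm{E}_x^i[U_{e_{q_i}}^{b_i,i}]=\int_{[0,\infty)}y\,\mathrm{P}_x(U_{e_{q_i}}^{b_i,i}\in\mathrm{d}y)$ from the regime-$i$ version of the explicit potential measure of Lemma \ref{lem.w}. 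The two features to exploit are: (i) $\delta_i>0$ makes $q_i>\lambda_i$, so $\lambda_i/q_i<1$ leaves a strict gap; and (ii) the dividend barrier $b_i$ caps the surplus at every Poisson observation, so a large initial $x$ is reset to $b_i$ at the first observation, forcing $\mathrm{E}_x^i[U_{e_{q_i}}^{b_i,i}]$ to grow in $x$ with slope strictly below one. Combining the exponential tail $W_{q_i+\gamma}(y)\sim e^{\Phi_{q_i+\gamma}y}/\psi'(\Phi_{q_i+\gamma})$ with the cancellations in the density of Lemma \ref{lem.w} should give a bound of the form $\mathrm{E}_x^i[U_{e_{q_i}}^{b_i,i}]\le\theta_i(1+x)$, after which $\frac{\lambda_i}{1+x}\cdot\frac{1}{q_i}\big(1+\theta_i(1+x)\big)\le\alpha<1$ holds uniformly, taking the maximum over the finite state space $\mathcal{E}$. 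Making these constants explicit and checking their uniformity in $x$ (in particular near $x=0$) is the delicate part.

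Finally, once $\rho(\mathcal{T}_{\mathbf{b}}f,\mathcal{T}_{\mathbf{b}}g)\le\alpha\,\rho(f,g)$ is established and $(\mathcal{B},\rho)$ is complete, Banach's theorem furnishes a unique fixed point of $\mathcal{T}_{\mathbf{b}}$ and the convergence $\mathcal{T}_{\mathbf{b}}^n f\to(\text{fixed point})$ under $\rho$ for every $f\in\mathcal{B}$. Since $V_{0,\mathbf{b}}\in\mathcal{B}$ and $V_{0,\mathbf{b}}=\mathcal{T}_{\mathbf{b}}V_{0,\mathbf{b}}$ by Lemma \ref{v.tv}, the fixed point is exactly $V_{0,\mathbf{b}}$, which proves \eqref{V.b.lim} and completes the argument.
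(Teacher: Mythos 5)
Your overall architecture --- Banach's fixed point theorem on the complete space $(\mathcal{B},\rho)$, the reduction of $\mathcal{T}_{\mathbf{b}}f-\mathcal{T}_{\mathbf{b}}g$ to the $\widehat f-\widehat g$ term, the Lipschitz estimate $\|\widehat f-\widehat g\|\le\|f-g\|$, and the identification of the fixed point with $V_{0,\mathbf{b}}$ via Lemma \ref{v.tv} --- coincides with the paper's proof. The divergence, and the genuine gap, is in how you produce the contraction constant. The paper does not go through the potential measure at all: it rewrites $\lambda_i\int_0^{\infty}e^{-q_it}\widehat f(U_t^{b_i,i},i)\,\mathrm{d}t$ as $\mathrm{E}_x^i\big[e^{-\delta_ie_{\lambda_i}}\widehat f(U_{e_{\lambda_i}}^{b_i,i},i)\big]$ with $e_{\lambda_i}$ the exponential regime-switching time, notes that inside $\widehat f$ the function $f$ is evaluated only at points $U_{e_{\lambda_i}}^{b_i,i}+y\in[0,U_{e_{\lambda_i}}^{b_i,i}]$ (the $\phi(x+y)$ terms cancelling), and extracts the factor $\beta:=\sup_i\mathrm{E}_0^i[e^{-\delta_ie_{\lambda_i}}]=\sup_i\lambda_i/(\lambda_i+\delta_i)<1$. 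The strict contraction therefore comes purely from the discounting accumulated over the holding time of the Markov chain, not from any quantitative control of the controlled surplus.

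Your route cannot close as stated. From $\mathrm{E}_x^i[U_{e_{q_i}}^{b_i,i}]\le\theta_i(1+x)$ you would need $\frac{\lambda_i}{q_i}\big(\tfrac{1}{1+x}+\theta_i\big)\le\alpha<1$ uniformly in $x\ge0$; at $x=0$ this forces $\theta_i<\delta_i/\lambda_i$, hence $\mathrm{E}_0^i[U_{e_{q_i}}^{b_i,i}]<\delta_i/\lambda_i$. But $\mathrm{E}_0^i[U_{e_{q_i}}^{b_i,i}]$ is a fixed strictly positive quantity determined by the L\'evy dynamics and the barrier $b_i$ (the process is reflected at $0$ and has positive jumps), and it bears no relation to $\delta_i/\lambda_i$, which may be arbitrarily small. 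So the inequality you need is a parameter restriction, not a theorem, and the step you yourself flag as ``the delicate part'' fails in general. (To be fair, the paper's own absorption of the ratio $(1+U_{e_{\lambda_i}}^{b_i,i})/(1+x)$ into the constant $\beta$ is asserted rather than argued; but the essential idea you are missing is that the factor strictly below one must come from $\mathrm{E}[e^{-\delta_ie_{\lambda_i}}]$, not from an affine bound on the potential measure of $U^{b_i,i}$.) You also omit the check that $\mathcal{T}_{\mathbf{b}}$ maps $\mathcal{B}$ into itself, which the paper carries out using the explicit expressions of Lemmas \ref{V.x} and \ref{lem.w}; that part is routine.
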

\begin{proof}
Recall that the metric space $(\mathcal{B},\rho)$ is complete. By Lemma \ref{V.x}, for $f\in\mathcal{B}$, we have
\begin{eqnarray}\label{Tb.n}
\frac{|\mathcal{T}_{\mathbf{b}}f(x,i)|}{1+|x|}
\hspace{-0.3cm}&=&\hspace{-0.3cm}
(1+|x|)^{-1}\left|\mathrm{E}_x^i\bigg[\sum_{n=1}^{\infty}e^{-q_{i}T_n}\Delta D_{T_n}^{b_{i},i}
-\phi\int_{0}^{\infty}e^{-q_{i}t}\mathrm{d}R_{t}^{b_{i},i}+\lambda_{i}\int_{0}^{\infty}e^{-q_{i}t}\widehat{f}(U_{t}^{b_{i},i},i)\mathrm{d}t\bigg]\right|
\nonumber\\
\hspace{-0.3cm}&=&\hspace{-0.3cm}
(1+|x|)^{-1}\left|\frac{-\gamma}{q_i+\gamma}\left[\overline{Z}_{q_i,i}(b_i-x)+\frac{\psi_i^{\prime}(0+)}{q_i}\right]+\frac{\lambda_i}{q_i}\int_0^{\infty}\hat{f}(y,i)\mathrm{P}_x\big(U_{e_{q_i}}^{b_i,i}\in\mathrm{d}y\big)\right.
\nonumber\\
\hspace{-0.3cm}&&\hspace{-0.3cm}
\left.+
\frac{\left(\gamma Z_{q_i,i}(b_i)-\phi(q_i+\gamma)\right)\left[Z_{q_i,i}(b_i-x,\Phi_{q_i+\gamma})+\frac{\gamma}{q_i}Z_{q_i,i}(b_i-x)\right]}{(q_i+\gamma)\Phi_{q_i+\gamma}Z_{q_i,i}(b_i,\Phi_{q_i+\gamma})}\right|,\quad x\in(0,\infty).
\end{eqnarray}
By (\ref{Tb.n}), Lemma \ref{lem.w}, and the fact that $\widehat{f}\in\mathcal{B}$, it holds that $\mathcal{T}_{\mathbf{b}}f\in\mathcal{B}$. Furthermore, for $f,g\in\mathcal{B}$, we can get that
\begin{eqnarray}\label{rho.fg}
\rho(\mathcal{T}_{\mathbf{b}}f,\mathcal{T}_{\mathbf{b}}g)
\hspace{-0.3cm}&=&\hspace{-0.3cm}
\max_{i\in\mathcal{E}}\sup_{x\geq0}\mathrm{E}_x^i\bigg[e^{-\delta_ie_{\lambda_i}}\sum_{j\in\mathcal{E},j\neq i}\frac{\lambda_{ij}}{\lambda_i}\int_{-\infty}^{-U_{e_{\lambda_i}}^{b_i,i}}\frac{|f(0,j)-g(0,j)|}{1+|x|}\mathrm{d}F_{ij}(y)
\nonumber\\
\hspace{-0.3cm}&&\hspace{-0.3cm}
+e^{-\delta_ie_{\lambda_i}}\sum_{j\in\mathcal{E},j\neq i}\frac{\lambda_{ij}}{\lambda_i}\int_{-U_{e_{\lambda_i}}^{b_i,i}}^0\frac{|f(U_{e_{\lambda_i}}^{b_i,i}+y,j)-g(U_{e_{\lambda_i}}^{b_i,i}+y,j)|}{1+|x|}\mathrm{d}F_{ij}(y)\bigg]
\nonumber\\
\hspace{-0.3cm}&\leq&\hspace{-0.3cm}
\rho(f,g)\sup_{i\in\mathcal{E}}\mathrm{E}_0^i[e^{-\delta_ie_{\lambda_i}}]
\nonumber\\
\hspace{-0.3cm}&:=&\hspace{-0.3cm}
\beta\rho(f,g),\quad\beta\in(0,1).
\end{eqnarray}
By (\ref{rho.fg}), for $f\in\mathcal{B}$, $(\mathcal{T}^n_{\mathbf{b}}f)_{n\geq1}$ is a Cauchy sequence. Therefore, we have
$$\mathcal{T}^{\infty}_{\mathbf{b}}f:=\lim_{n\uparrow\infty}\mathcal{T}^n_{\mathbf{b}}f=\mathcal{T}_{\mathbf{b}}(\lim_{n\uparrow\infty}\mathcal{T}^n_{\mathbf{b}}f)=\mathcal{T}_{\mathbf{b}}(\mathcal{T}^{\infty}_{\mathbf{b}}f),\quad f\in\mathcal{B},$$
which implies that $\mathcal{T}^{\infty}_{\mathbf{b}}f$ is a fixed point of the mapping $\mathcal{T}_{\mathbf{b}}$. By Lemma \ref{v.tv}, we obtain the desired result. This completes the proof.
\end{proof}

Let us define another space of function that
\begin{eqnarray}
\mathcal{C}:=\{f\in\mathcal{B}|\widehat{f}(x,i)\text{ is concave and }\widehat{f^{\prime}}(0,i)\leq\phi\text{ and }\widehat{f^{\prime}}(\infty,i)\in[0,1]\text{ for }i\in\mathcal{E}\}.\nonumber
\end{eqnarray}

\vspace{-0.3cm}
\begin{lem}\label{lem4.4}
Suppose that $f\in\mathcal{B}\cap C^{1}(\mathbb{R}_{+})$ is concave, non-decreasing, and satisfies $f^{\prime}(\cdot,i)\leq \phi$ and $f^{\prime}(\infty,i)\in[0,1]$ for all $i\in\mathcal{E}$, we have that $f\in \mathcal{C}$.
\end{lem}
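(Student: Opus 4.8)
The plan is to verify directly the three defining properties of $\mathcal{C}$ for the averaged function $\widehat{f}(\cdot,i)$: concavity, the bound $\widehat{f^{\prime}}(0,i)\le\phi$, and $\widehat{f^{\prime}}(\infty,i)\in[0,1]$, where $\widehat{f^{\prime}}$ denotes the (one-sided) derivative $\partial_x\widehat{f}$. Membership $f\in\mathcal{B}$ is assumed, so these are the only conditions to check. Fix $i\in\mathcal{E}$ and set $p_{ij}:=\lambda_{ij}/\lambda_i$, so that $p_{ij}\ge0$ and $\sum_{j\neq i}p_{ij}=1$. Since $F_{ij}$ is the law of the downward jump $J_{ij}\le0$, it is a probability measure on $(-\infty,0]$, and we may write $\widehat{f}(x,i)=\sum_{j\neq i}p_{ij}\int_{-\infty}^{0}h_{y,j}(x)\,\mathrm{d}F_{ij}(y)$, where for each fixed $y\le0$
$$h_{y,j}(x):=f(x+y,j)\mathbf{1}_{\{x+y\ge0\}}+\big(\phi(x+y)+f(0,j)\big)\mathbf{1}_{\{x+y<0\}},\qquad x\ge0.$$

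First I would establish concavity. For fixed $y\le0$ and $j\neq i$, the map $h_{y,j}$ is linear with slope $\phi$ on $[0,-y)$ and equals the concave function $x\mapsto f(x+y,j)$ on $[-y,\infty)$; the two pieces agree at the junction $x=-y$ (both equal $f(0,j)$), so $h_{y,j}$ is continuous there. Its left slope at $-y$ is $\phi$ and its right slope is $f^{\prime}(0+,j)$, and the hypothesis $f^{\prime}(\cdot,j)\le\phi$ gives $f^{\prime}(0+,j)\le\phi$, so the slope does not increase across the junction. Hence each $h_{y,j}$ is concave on $[0,\infty)$. Integrating against the probability measure $F_{ij}$ and then forming the convex combination $\sum_{j\neq i}p_{ij}(\cdots)$ preserves concavity, so $\widehat{f}(\cdot,i)$ is concave, which is the first required property.

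Next I would treat the slope conditions. Since $f(\cdot,j)$ is concave and $C^1$, the derivative $f^{\prime}(\cdot,j)$ is non-increasing and lies in $[f^{\prime}(\infty,j),f^{\prime}(0,j)]\subseteq[0,\phi]$, so the right difference quotients of $h_{y,j}$ are monotone in the increment and uniformly bounded by $\phi$; monotone/dominated convergence then justifies differentiating under the integral, yielding for $x\ge0$ (one-sided at $x=0$)
$$\widehat{f}'_+(x,i)=\sum_{j\neq i}p_{ij}\int_{-\infty}^{0}\Big[f^{\prime}(x+y,j)\mathbf{1}_{\{x+y\ge0\}}+\phi\,\mathbf{1}_{\{x+y<0\}}\Big]\mathrm{d}F_{ij}(y).$$
Evaluating at $x=0$, the event $\{y\ge0\}$ reduces to the atom $\{y=0\}$, on which the integrand equals $f^{\prime}(0,j)\le\phi$, while on $\{y<0\}$ it equals $\phi$; as the integrand is everywhere $\le\phi$ and $\sum_{j\neq i}p_{ij}=1$, this gives $\widehat{f}'_+(0+,i)\le\phi$. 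Letting $x\to\infty$, for each fixed $y\le0$ one has $\mathbf{1}_{\{x+y\ge0\}}\to1$ and $f^{\prime}(x+y,j)\to f^{\prime}(\infty,j)$, so dominated convergence yields $\widehat{f}'(\infty,i)=\sum_{j\neq i}p_{ij}f^{\prime}(\infty,j)$, a convex combination of numbers $f^{\prime}(\infty,j)\in[0,1]$ and hence itself in $[0,1]$. This verifies the remaining two properties, so $f\in\mathcal{C}$.

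The main obstacle is conceptual rather than computational: one must recognize that the below-zero extension by the line of slope $\phi$ through $(0,f(0,j))$ built into $\widehat{f}$ is calibrated precisely so that concavity survives the averaging exactly when $f^{\prime}(\cdot,j)\le\phi$, the junction-slope comparison at $x=-y$ being the crux. The only genuine technical care is the legitimacy of differentiating under the integral for one-sided derivatives and the bookkeeping of a possible atom of $F_{ij}$ at $0$, both handled by the boundedness of $f^{\prime}$ on $[0,\phi]$ and the monotonicity of difference quotients of concave functions.
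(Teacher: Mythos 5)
Your proof is correct and follows essentially the same route as the paper's: both rewrite $\widehat{f}$ so that the below-zero linear extension with slope $\phi$ is exposed, differentiate (the paper computes $\widehat{f}^{\prime}(x,i)=\sum_{j\neq i}\tfrac{\lambda_{ij}}{\lambda_i}\big[\phi+\int_{-x}^{0}(f^{\prime}(x+y,j)-\phi)\,\mathrm{d}F_{ij}(y)\big]$, which is exactly your formula), and then read off concavity from $f^{\prime}\le\phi$ together with the monotonicity of $f^{\prime}$, and the slope bounds from $f^{\prime}(\cdot,j)\in[0,\phi]$ and $f^{\prime}(\infty,j)\in[0,1]$. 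Your only organizational difference is deriving concavity pointwise from the junction-slope comparison of each integrand $h_{y,j}$ before integrating, rather than from the derivative formula afterwards; this is a cosmetic variation, with slightly more explicit justification of differentiation under the integral than the paper provides.
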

\begin{proof}
By definition, $\widehat{f}$ can be rewritten as
\begin{eqnarray}
\widehat{f}(x,i)
\hspace{-0.3cm}&=&\hspace{-0.3cm}
\sum_{j\in\mathcal{E}, j\neq  i}\frac{\lambda_{ij}}{\lambda_{i}}\bigg[\int_{-x}^{0}\big[f(x+y,j)-(\phi(x+y)+f(0,j))\big]\mathrm{d}F_{ij}(y)+\phi(x+\mathrm{E}[J_{ij}])+f(0,j)\bigg],
\nonumber
\end{eqnarray}
which implies that
\begin{eqnarray}
\label{f.hat.par}
\widehat{f}^{\prime}(x,i)=\sum_{j\in\mathcal{E}, j\neq  i}\frac{\lambda_{ij}}{\lambda_{i}}\bigg[\phi+\int_{-x}^{0}\big[f^{\prime}(x+y,j)-\phi\big]\mathrm{d}F_{ij}(y)\bigg].
\end{eqnarray}
Combining the concavity of $f$ and (\ref{f.hat.par}), one can get $\widehat{f}(x,i)$ is also concave. Furthermore, by the fact that $f^{\prime}(x+y,j)-\phi\leq0$, we can deduce that $$\widehat{f}^{\prime}(x,i)\leq\sum_{j\in\mathcal{E}, j\neq  i}\frac{\lambda_{ij}}{\lambda_{i}}\phi=\phi.$$
On the other hand, by the fact that $f^{\prime}(\infty,i)\in[0,1]$, we have
\begin{eqnarray}
0=\sum_{j\in\mathcal{E}, j\neq  i}\frac{\lambda_{ij}}{\lambda_{i}}\bigg[\phi-\int_{-\infty}^{0}\phi\mathrm{d}F_{ij}(y)\bigg]\leq\widehat{f}^{\prime}(\infty,i)\leq\sum_{j\in\mathcal{E}, j\neq  i}\frac{\lambda_{ij}}{\lambda_{i}}\Big[\phi+\int_{-\infty}^{0}\big(1-\phi\big)\mathrm{d}F_{ij}(y)\Big]=1.\nonumber
\end{eqnarray}
Then, $f\in\mathcal{C}$. The proof is complete.
\end{proof}

For $f\in \mathcal{C}$ and $(x,i)\in \mathbb{R}_{+} \times \mathcal{E}$, let us define another operator $\mathcal{T}_{\sup}$ that
\begin{eqnarray}\label{T.sup}
\mathcal{T}_{\sup} f(x,i)
\hspace{-0.3cm}&:=&\hspace{-0.3cm}
\sup_{D,R}\mathrm{E}_{x,i}\bigg[\sum_{n=1}^{\infty}e^{-\delta_{i}T_n}\Delta D_{T_n}\mathbf{1}_{\{T_n\leq e_{\lambda_i}\}}
-\phi\int_{0}^{e_{\lambda_{i}}}e^{-\delta_{i}t}\mathrm{d}R_t
+e^{-\delta_{i}e_{\lambda_{i}}}\widehat{f}(U_{e_{\lambda_{i}}-},i)\bigg]
\nonumber\\
\hspace{-0.3cm}&=&\hspace{-0.3cm}
\sup_{D^i,R^i}\mathrm{E}_x^i\bigg[\sum_{n=1}^{\infty}e^{-q_{i}T_n}\Delta D^{i}_{T_n}-\phi\int_{0}^{\infty}e^{-q_{i}t}\mathrm{d}R^{i}_t+\lambda_{i}\int_{0}^{\infty}e^{-q_{i}t}\widehat{f}(U^{i}_t,i)\mathrm{d}t\bigg],
\end{eqnarray}
where $U^{i}_{t}=X^{i}_{t}-D^{i}_{t}+R^{i}_{t}$ represents the controlled surplus process with control $(D^{i},R^{i})$ and driving process $X^{i}$.

Denote $\underline{V}_0:=\underline{V}$ and $\overline{V}_0:=\overline{V}$ as well as $\underline{V}_n:=\mathcal{T}_{\sup}(\underline{V}_{n-1})$ and $\overline{V}_n:=\mathcal{T}_{\sup}(\overline{V}_{n-1})$, for $n\geq1$.
\begin{lem}\label{lem.V.n}
We have $\underline{V}_n\leq V\leq \overline{V}_n$ on $\mathbb{R}_+\times\mathcal{E}$ for all $n\geq1$, and
\begin{eqnarray}\label{V.lim}
V(x,i)=\lim_{n\uparrow\infty}\underline{V}_n(x,i)=\lim_{n\uparrow\infty}\overline{V}_n(x,i),\quad(x,i)\in\mathbb{R}_+\times\mathcal{E},
\end{eqnarray}
where the convergence is under the metric $\rho(\cdot,\cdot)$. Moreover, we have $V\in\mathcal{C}$.
\end{lem}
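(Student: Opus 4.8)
The plan is to identify $V$ with the unique fixed point of the operator $\mathcal{T}_{\sup}$ and then to trap it between the two iterated sequences $\underline{V}_n$ and $\overline{V}_n$. The first and most substantial step is the invariance $\mathcal{T}_{\sup}(\mathcal{C})\subseteq\mathcal{C}$. For $f\in\mathcal{C}$ the regime-averaged payoff $\widehat{f}(\cdot,i)$ is continuous, concave, with $\widehat{f}^{\prime}(0,i)\le\phi$ and $\widehat{f}^{\prime}(\infty,i)\in[0,1]$, so it is an admissible terminal payoff for the auxiliary problem \eqref{valefunctionofpi} with $\lambda=\lambda_i$ and $q=q_i$. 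Matching \eqref{T.sup} against \eqref{valefunctionofpi} identifies $\mathcal{T}_{\sup}f(\cdot,i)$ with the auxiliary value function, and Theorem \ref{them.3.1} realises it as the double-barrier value function $V^{\widehat{f}(\cdot,i)}_{0,b_i}$ with $b_i=b^{\widehat{f}(\cdot,i)}$. By Lemma \ref{lem.par.b} this function is continuously differentiable (twice so on $(0,\infty)$ in the unbounded-variation case), and Lemma \ref{V.bw.parl} shows it is non-decreasing, concave, with derivative bounded by $\phi$ and slope at infinity in $[0,1]$; Lemma \ref{lem4.4} then upgrades these properties to $\mathcal{T}_{\sup}f\in\mathcal{C}$.

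Next I would record two soft properties of $\mathcal{T}_{\sup}$, which is well defined on all of $\mathcal{B}$ through its sup-of-expectation form. Monotonicity: if $f\le g$ then $\widehat{f}\le\widehat{g}$ by the explicit form of the hat map, whence $V^{\widehat{f}}_\pi\le V^{\widehat{g}}_\pi$ for every admissible $\pi$ and thus $\mathcal{T}_{\sup}f\le\mathcal{T}_{\sup}g$. Contraction: exactly the computation leading to \eqref{rho.fg} gives $\rho(\mathcal{T}_{\sup}f,\mathcal{T}_{\sup}g)\le\beta\,\rho(f,g)$ with $\beta=\sup_{i}\mathrm{E}_0^i[e^{-\delta_ie_{\lambda_i}}]\in(0,1)$, since the two values differ only through $\widehat{f}-\widehat{g}$ evaluated at the first regime-switch time. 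As $\mathcal{T}_{\sup}$ is a contraction on the complete space $(\mathcal{B},\rho)$, it admits a unique fixed point $\widetilde{V}\in\mathcal{B}$ and $\mathcal{T}^n_{\sup}h\to\widetilde{V}$ for every $h\in\mathcal{B}$.

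To bring in $V$ I would first check, via Lemma \ref{lem4.4}, that the starting bounds $\underline{V}_0=\underline{V}$ and $\overline{V}_0=\overline{V}$ from Lemma \ref{v.in.B} lie in $\mathcal{C}$: both are affine in $x$, hence concave, non-decreasing and $C^1$ with slopes in $[0,1]\subset[0,\phi]$. Invariance then gives $\underline{V}_n,\overline{V}_n\in\mathcal{C}$, and the contraction gives $\underline{V}_n,\overline{V}_n\to\widetilde{V}$. The crucial link is that the dynamic programming principle identifies $V$ as a fixed point, $V=\mathcal{T}_{\sup}V$: before the first switch $Y$ stays in state $i$ so the discount is $e^{-\delta_ie_{\lambda_i}}$, and averaging the continuation value $V(U_{e_{\lambda_i}},Y_{e_{\lambda_i}})$ over the destination $j$ (with weight $\lambda_{ij}/\lambda_i$) and the downward jump $J_{ij}$, including the bail-out term $\phi(U_{e_{\lambda_i}-}+J_{ij})+V(0,j)$ when the jump overshoots $0$, reproduces exactly $\widehat{V}(U_{e_{\lambda_i}-},i)$; the exponential-time manipulation of \eqref{valefunctionofpi} then recasts the principle in the $\mathcal{T}_{\sup}$-form. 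With $V=\mathcal{T}_{\sup}V$ in hand, the sandwich $\underline{V}_n\le V\le\overline{V}_n$ follows by induction, the base case being Lemma \ref{v.in.B} and the step $\underline{V}_n=\mathcal{T}_{\sup}\underline{V}_{n-1}\le\mathcal{T}_{\sup}V=V\le\mathcal{T}_{\sup}\overline{V}_{n-1}=\overline{V}_n$ using monotonicity. Letting $n\to\infty$ and using $\underline{V}_n,\overline{V}_n\to\widetilde{V}$ pointwise forces $V=\widetilde{V}$, which is \eqref{V.lim}; and since $V=\lim_n\underline{V}_n$ with each $\underline{V}_n$ ($n\ge1$) concave, non-decreasing and carrying the slope bounds, the limit $V$ inherits these, so Lemma \ref{lem4.4} yields $V\in\mathcal{C}$.

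I expect the invariance step $\mathcal{T}_{\sup}(\mathcal{C})\subseteq\mathcal{C}$ to be the principal obstacle, since it is the only place where the delicate single-regime regularity and two-sided slope information is used: one must chain the double-barrier optimality of Theorem \ref{them.3.1}, the smoothness of Lemma \ref{lem.par.b} and the slope bounds of Lemma \ref{V.bw.parl}, and check that these survive the averaging $f\mapsto\widehat{f}$ via Lemma \ref{lem4.4}. A secondary point needing care is the bookkeeping in $V=\mathcal{T}_{\sup}V$, namely matching the jump-plus-reflection at a regime switch with the two branches of $\widehat{\cdot}$, together with the verification that concavity and the slope bounds indeed persist under the $\rho$-limit so that Lemma \ref{lem4.4} applies to $V$ itself.
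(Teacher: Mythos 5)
Your proposal is correct and follows essentially the same route as the paper: the sandwich by induction using monotonicity of $\mathcal{T}_{\sup}$ and the fixed-point identity $V=\mathcal{T}_{\sup}V$ from the dynamic programming principle, the invariance $\mathcal{T}_{\sup}(\mathcal{C})\subseteq\mathcal{C}$ via Theorem \ref{them.3.1}, Lemma \ref{lem.par.b} and the slope bounds, the contraction estimate inherited from \eqref{rho.fg}, and passage to the limit to conclude $V\in\mathcal{C}$. Your write-up is if anything slightly more explicit than the paper's (which leaves the monotonicity of $\mathcal{T}_{\sup}$ and the fixed-point property of $V$ largely implicit), but there is no substantive difference in method.
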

\begin{proof}
One can verify the first claim of Lemma \ref{lem.V.n}  by the method of induction. In fact, by Lemma \ref{v.in.B} and Lemma \ref{lem4.4}, we have $\underline{V}_0\leq V\leq\overline{V}_0$ and $\underline{V}_0,\overline{V}_0\in\mathcal{C}$. Suppose that $\underline{V}_{n-1}\leq V\leq\overline{V}_{n-1}$, then$$\underline{V}_n=\mathcal{T}_{\sup}(\underline{V}_{n-1})\leq\mathcal{T}_{\sup}({V})\leq\mathcal{T}_{\sup}(\overline{V}_{n-1})=\overline{V}_n,$$which, together with the fact that $V$ is a fixed point of the mapping $\mathcal{T}_{\sup}$, implies that $\underline{V}_{n}\leq V\leq\overline{V}_{n}$ for all $n\geq1$.

To prove the second claim of Lemma \ref{lem.V.n}, for any $f\in\mathcal{C}$ and $i\in\mathcal{E}$, Theorem \ref{them.3.1} guarantees the existence of $b^{f}_i\in(0,\infty)$ such that the second equality of (\ref{T.sup}) is achieved by the expected NPV under a periodic-classical barrier strategy with upper barrier $b^f_i$ and lower barrier 0. Denote $\mathbf{b}^f=(b^f_i)_{i\in\mathcal{E}}$, it follows that $\mathcal{T}_{\sup}f=\mathcal{T}_{\mathbf{b}^f}f$ over $\mathbb{R}_+\times\mathcal{E}$. Furthermore, by Lemma \ref{lem.par.b} and Lemma \ref{lem.b.w}, one can get that $\mathcal{T}_{\sup}f(\cdot,i)\in C^1(0,\infty)$ and it is concave as well as $(\mathcal{T}_{\sup}f)^{\prime}(0,i)\leq\phi$ and $(\mathcal{T}_{\sup}f)^{\prime}(\infty,i)\in[0,1]$ for $i\in\mathcal{E}$, which together with Lemma \ref{lem4.4} yields $\mathcal{T}_{\sup}f\in\mathcal{C}$ and $$\rho(\mathcal{T}_{sup}f,\mathcal{T}_{\sup}g)=\rho(\mathcal{T}_{\mathbf{b}^f}f,\mathcal{T}_{\mathbf{b}^g}g)=\rho(\sup_{\mathbf{b}}\mathcal{T}_{\mathbf{b}}f,\sup_{\mathbf{b}}\mathcal{T}_{\mathbf{b}}g)\leq\sup_{\mathbf{b}}\rho(\mathcal{T}_{\mathbf{b}}f,\mathcal{T}_{\mathbf{b}}g)\leq\beta\rho(f,g),\quad\beta\in(0,1),$$i.e.,$\mathcal{T}_{\sup}$ is a contraction mapping from $\mathcal{C}$ to itself. Hence, the Cauchy sequences $(\underline{V}_n)_{n\geq1}$ and $(\overline{V}_n)_{n\geq1}$ converge to the unique fixed point $V$ of $\mathcal{T}_{\sup}$. In addition, by (\ref{V.lim}) and the dominated convergence theorem, we have that
\begin{eqnarray}\label{V.hat}
\widehat{V}(x,i)
\hspace{-0.3cm}&=&\hspace{-0.3cm}
\sum_{j\in\mathcal{E}, j\neq  i}\frac{\lambda_{ij}}{\lambda_{i}}\int_{-\infty}^{0}\Big[V(x+y,j)\mathbf{1}_{\{-y\leq x\}}
+(\phi(x+y)+V(0,j))\mathbf{1}_{\{-y>x\}}\Big]\mathrm{d}F_{ij}(y)
\nonumber\\
\hspace{-0.3cm}&=&\hspace{-0.3cm}
\lim\limits_{n\rightarrow\infty}\sum_{j\in\mathcal{E}, j\neq  i}\frac{\lambda_{ij}}{\lambda_{i}}\int_{-\infty}^{0}\Big[\underline{V}_n(x+y,j)\mathbf{1}_{\{-y\leq x\}}
+(\phi(x+y)+\underline{V}_n(0,j))\mathbf{1}_{\{-y>x\}}\Big]\mathrm{d}F_{ij}(y)
\nonumber\\
\hspace{-0.3cm}&=&\hspace{-0.3cm}
\lim\limits_{n\rightarrow\infty}\sum_{j\in\mathcal{E}, j\neq  i}\frac{\lambda_{ij}}{\lambda_{i}}\int_{-\infty}^{0}\Big[\overline{V}_n(x+y,j)\mathbf{1}_{\{-y\leq x\}}
+(\phi(x+y)+\overline{V}_n(0,j))\mathbf{1}_{\{-y>x\}}\Big]\mathrm{d}F_{ij}(y)
\nonumber\\
\hspace{-0.3cm}&=&\hspace{-0.3cm}
\lim\limits_{n\rightarrow\infty}\widehat{\underline{V}}_{n}(x,i)=\lim\limits_{n\rightarrow\infty}\widehat{\overline{V}}_{n}(x,i),\quad (x,i)\in \mathbb{R}_{+}\times\mathcal{E}.
\end{eqnarray}
By (\ref{V.hat}) and the fact that $(\underline{V}_n)_{n\geq1}\subseteq\mathcal{C}$ and $(\overline{V}_n)_{n\geq1}\subseteq\mathcal{C}$, we derive that $V\in\mathcal{C}$.
\end{proof}

Finally, we give the proof of Theorem \ref{thm2.1} using the previous preparations.

\begin{proof}[Proof of Theorem 2.1]
By Lemma \ref{lem.V.n}, we have $V\in\mathcal{C}$, which together with Theorem \ref{them.3.1} yields that there exists a function $\mathbf{b}^V=(b_i^V)_{i\in\mathcal{E}}\in(0,\infty)^{\mathcal{E}}$ such that $V(x,i)=\mathcal{T}_{\sup}V(x,i)=\mathcal{T}_{\mathbf{b}^V}(x,i)$ for all $(x,i)\in\mathbb{R}_+\times\mathcal{E}$. Hence, by (\ref{V.b.lim}) and $V\in\mathcal{B}$ (as $V\in\mathcal{C}$), we have that $$V(x,i)=\lim_{n\uparrow\infty}\mathcal{T}_{\mathbf{b}^V}^nV(x,i)=V_{0,\mathbf{b}^V}(x,i),$$i.e., $\mathbf{b}^*:=\mathbf{b}^V=(b^V_i)_{i\in\mathcal{E}}$ is the desired periodic barrier function such that the conclusion of Theorem \ref{thm2.1} holds. The proof is then complete.
\end{proof}

\end{document}